\pdfoutput=1

%

\documentclass{amsart}
\usepackage{fullpage }   

\usepackage[centertags]{amsmath}
\usepackage{amsfonts}
\usepackage{amssymb}
\usepackage{amsthm}
\usepackage{ subfigure}
\usepackage{graphicx}
\usepackage{color}
\usepackage{enumerate}

\newtheorem{theorem}{Theorem}[section]
\newtheorem{proposition}[theorem]{Proposition}
\newtheorem{lemma}[theorem]{Lemma}

\newtheorem{corollary}[theorem]{Corollary}

\theoremstyle{definition}
\newtheorem{definition}[theorem]{Definition}

\newtheorem{example}[theorem]{Example}

\theoremstyle{remark}
\newtheorem{remark}[theorem]{Remark}

\numberwithin{equation}{section}

\begin{document}


\newcommand{\BR}{Bollob\'{a}s-Riordan }
\newcommand{\MBR}{topochromatic }

\newcommand{\V}{V}
\newcommand{\E}{{E}}

\newcommand{\BG}{{ G}}

\newcommand{\bv}{{ v}}
\newcommand{\BV}{{ V}}
\newcommand{\BE}{{ E}}
\newcommand{\BC}{\mathrm{BC}}

\newcommand{\be}{{ e}}
\newcommand{\bb}{{ \boldsymbol b}}
\newcommand{\bu}{{  u}}

\newcommand{\bal}{{\boldsymbol\alpha}}
\newcommand{\bbe}{{\boldsymbol\beta}}
\newcommand{\bga}{{\boldsymbol\gamma}}

\newcommand{\al}{ \alpha}
\newcommand{\bet}{ \beta}
\newcommand{\ga}{ \gamma}
\newcommand{\la}{ \lambda}

\newcommand{\hZ}{\tilde{Z}}

\newcommand{\g}{\xi}
\newcommand{\h}{\zeta}

\newcommand{\vg}{\boldsymbol\xi}
\newcommand{\vh}{\boldsymbol\zeta}

\newcommand{\mm}{\bar{m}}
\newcommand{\cG}{G_c}
\newcommand{\cmG}{G_{\bar{c}}}
\newcommand{\fG}{\mathfrak{G}}
\newcommand{\fS}{S}
\newcommand{\calG}{\mathcal{G}}
\newcommand{\calS}{\mathcal{S}}

\newcommand{\vs}{\vec{s}\,{}^{\prime} }
\newcommand{\vG}{\vec{s}\,{}^{\Gamma} }


\title[ Twisted duality for embedded graphs ]{Twisted duality for embedded graphs }



\author[J.~Ellis-Monaghan]{Joanna A. Ellis-Monaghan}
\address{Department of Mathematics, Saint Michael's College, 1 Winooski Park, Colchester, VT 05439, USA.  }
\email{jellis-monaghan@smcvt.edu}
\thanks{The work of the first author was supported by the National Science Foundation (NSF) under grant number DMS-1001408, by the Vermont Space Grant Consortium through the National Aeronautics and Space Administration (NASA), and by the Vermont Genetics Network through Grant Number P20 RR16462 from the INBRE Program of the National Center for Research Resources (NCRR), a component of the National Institutes of Health (NIH).  This paper's contents are solely the responsibility of the authors and do not necessarily represent the official views of the NSF, NASA, NCRR, or NIH}

\author[I.~Moffatt]{ Iain Moffatt}
\address{Department of Mathematics and Statistics,  University of South Alabama, Mobile, AL 36688, USA}
\email{imoffatt@jaguar1.usouthal.edu}

\subjclass[2010]{Primary 05C10, Secondary 05C31}

\date{\today}

\begin{abstract}  
We consider two operations on an edge of an embedded graph (or equivalently a ribbon graph): giving a half-twist to the edge and taking the partial dual with respect to the edge.   These two operations give rise to an action of ${\fS_3}^{|E(G)|}$, the \emph{ribbon group},  on $G$. 
The action of the ribbon group on embedded graphs extends the concepts of duality, partial duality and Petrie duality.
We show that this ribbon group action gives a complete characterization of duality in that if $G$ is any cellularly embedded graph with medial graph $G_m$, then the orbit of $G$ under the group action is precisely the set of all graphs with medial graphs isomorphic (as abstract graphs) to $G_m$. We provide characterizations of special sets of twisted duals, such as the partial duals, of embedded graphs in terms of medial graphs and we show how different kinds of graph isomorphism  give rise to these various notions of duality. The ribbon group action then leads to a deeper understanding of the properties of, and relationships among, various graph polynomials  via the generalized transition polynomial which interacts naturally with the ribbon group action.  

\end{abstract}

\maketitle

\section{Introduction}

We define the \emph{ribbon group action} on the set of embedded graphs by taking duals and twisting individual edges. The ribbon group action completes the classical concept of duality for embedded graphs, and we exploit this group action to  unravel and explore connections between embedded graphs and  their medial graphs, and to determine implications of these new relations.  The ribbon group action allows us to: classify the twisted duals of an embedded graph; characterize all graphs with isomorphic medial graphs under various kinds of  isomorphism; extend the concept of a Tait graph to all $4$-regular graphs;  and to determine new properties of polynomials of embedded graphs.

\medskip

There are many investigations into planar and abstract graphs that have not yet been fully extended to the context of graphs embedded in surfaces. Consider for example the classical relations among the medial graphs and the duals of plane graphs. Suppose that $G$ is a plane graph, $G^*$ is its dual, and $G_m$ is its medial graph. The medial graph of $G^*$ is exactly the medial graph of $G$, \emph{i.e.} $(G^*)_m=G_m$, where ``$=$'' denotes equality as plane graphs. In fact, the connection between geometric duals and medial graphs is a little stronger than this. The two graphs 
 $G$ and $G^*$ are the only plane graphs that have $G_m$ as their plane medial graphs, that is,
\begin{equation}\label{mot}  \{G,G^*\} = \{H\;|\; H_m=G_m\}. \end{equation} 
This equality of sets also holds when $G$ is cellularly embedded in any surface, not just the plane.

How does the left-hand set in (\ref{mot}) change if, for example, $G_m$ and $H_m$ are equivalent, not as plane embeddings, but as abstract graphs?  How then are $G$ and $H$ related? The ribbon group action provides a way to answer this question precisely.  We will see that   $G_m$ and $H_m$ are equivalent as abstract graphs if and only if $G$ and $H$ are in the same orbit under the ribbon group action (see Theorem~\ref{c.orbg}). 
In fact, the ribbon group action answers a host of related isomorphism questions. For example,  we will determine the set of embedded graphs whose medial graphs are isomorphic as abstract graphs to a given $4$-regular graph $F$, in particular, the set $\{ G \;|\; G_m \cong F  \}$ is exactly the set of cycle family graphs of an arbitrary embedding of $F$ (see Theorem~\ref{cycle to medial}).

\medskip

The ribbon group action, introduced in Section~\ref{s.td}, is a far-reaching extension of the notions of  geometric duality and  Petrie duality.  
Chmutov, in \cite{Ch1}, extended the idea of a geometric dual of an embedded graph by describing how a dual can be formed   with respect to individual edges of the embedded graph. The resulting duality, called partial duality, has found a number of applications in mathematics and physics (see, for example, \cite{Ch1,KRVb, Mo5} and the references therein). 
 Here we take the  idea of a partial dual further by allowing two operations on the edges of an embedded graph $G$: taking the dual with respect to an edge, and also giving a half-twist to an edge. These two operations give rise to a group action of ${S_3}^{|E(G)|}$ on $G$, which we call the \emph{ribbon group action}. We say that two embedded graphs are {\em twisted duals} if and only if they lie in the same orbit under the group action. Chmutov's partial duality and Petrie duality arise as the action of  important, natural subgroups of the ribbon group.

Many of the applications of twisted duality presented in this paper arise from the connection between an embedded graph and its embedded medial graph. Again we are motivated by classical results in graph theory. If $F$ is a $4$-regular plane graph, then it is necessary checkerboard colourable, and it is well-known how to construct the set $\{ G\;|\; G_m=F  \}$ of embedded graphs which have $F$  as their medial graph. The graphs in this set are called the Tait graphs of $F$. When $F$ is a non-plane graph, then the construction of the Tait graphs can fail since not all embedding of a $4$-regular graph may be checkerboard colourable. However, in Section~\ref{s.medialandribbongroup} we introduce {\em cycle family graphs}. Cycle family graphs extend of  the idea of a Tait graph to arbitrary $4$-regular embedded graphs.  It is well known that the Tait graphs of a $4$-regular checkerboard colourable graph are geometric duals. We show that this classical connection between graphs and medial graphs lifts to cycle family graphs and twisted duals. In Subsection~\ref{ss.mgpd}, we show that the set of twisted duals of an embedded graph $G$ is exactly the set of cycle family graphs of its medial graph $G_m$.
This result provides a powerful connection between embedded graphs and their medial graph and is one of the main tools in our paper.

Twisted duality leads to the discovery that the concepts of duality and of graph equality are equivalent. Revisiting the identity (\ref{mot}) above, we observe that definition of equality of medial graphs determines, and is determined by, the definition of duality: if we begin with characterizing the set  $\{H\;|\; H_m=G_m\}$, we are led to the classical concept of geometric duality, and vice versa.  This suggests the following point of view:  if $\sim$ denotes some notion of equivalence of embedded graphs, then $\{ H\; |\;  H_m\sim G_m\}$ is exactly the  set of graphs dual to $G$ for some corresponding notion of duality. Thus, we can regard notions of equality of  embedded graphs as being equivalent to notions of duality. This point of view provides a hierarchy and a framework for understanding generalizations of duality, and for understanding isomorphism problems for $4$-regular graphs. We show how geometric duality, partial duality and twisted duality fit into this framework and provide corresponding notions of embedded graph equivalence. In particular, in Subsection~\ref{ss.orbits}, we use twisted duality to solve isomorphism problems for medial graphs by proving that if $G$ and $H$ are embedded graphs then:
\begin{itemize}
\item $G_m$ and $H_m$ are equivalent as embedded graphs if and only if $G$ and $H$ are geometric duals;
\item  $G_m$ and $H_m$ are equivalent as locally embedded maps if and only if $G$ and $H$ are partial duals;
\item $G_m$ and $H_m$ are equivalent as abstract graphs if and only if $G$ and $H$ are twisted duals.
\end{itemize}

\medskip

Twisted duality also proves useful for studying graph polynomials.  Graph polynomials are a natural forum for studying duality properties.  However, most graph polynomials apply either to plane graphs ({\em e.g.} the Penrose polynomial of \cite{Pen71}) or to abstract graphs ({\em e.g.} the classical Tutte polynomial of \cite{Tut47,Tut54,Tut67}).  Notable exceptions are the extensions of the Tutte polynomial to embedded graphs by Las Vergnas in \cite{Las78}, and by Bollob\'as and Riordan in \cite{BR1, BR2}. In Section~\ref{sec:transpoly}, we adapt the generalized transition polynomial of \cite{E-MS02} to embedded graphs.    We then showcase the applicability of twisted duality and the ribbon group action by showing that the ribbon group action corresponds to an action of the symmetric group on the vertex state weights of the transition polynomial.  This gives a twisted duality relation in terms of the weight systems for this polynomial. Because the transition polynomial  assimilates or is related to many other graph polynomials, this then leads to twisted duality relations for other polynomials. 

In particular, an extension of the Penrose polynomial to ribbon graphs is given in \cite{E-MMc}, and, via the transition polynomial, the theory of twisted duality leads to new properties of the Penrose polynomial, including a restatement of the Four Colour Theorem.  The transition polynomial is also related to the topological Tutte polynomial of  Bollob\'as and Riordan \cite{BR1, BR2}, so the identities for the transition polynomial derived in Section~\ref{sec:transpoly} using twisted duality now transfer to this polynomial and this leads to a number of new results about the polynomial (see \cite{E-MMd}).

\section{Embedded graphs} \label{Sec Embedded graphs}

We use the term ``embedded graph'' loosely to mean any of four equivalent representations of graphs in surfaces.  We may think of an embedded graph as any of:
\begin{enumerate}\renewcommand{\labelenumi}{(\alph{enumi})}
\item a cellularly embedded graph, that is, a graph embedded in a surface such that every face is a 2-cell;
\item a signed rotation system;
\item a ribbon graph, also known as a band decomposition;
\item an arrow presentation.
\end{enumerate}    

In subsequent sections we will use these equivalent representations interchangeably, using whichever best facilitates the discussion at hand.  We assume the reader is familiar with cellular embeddings of graphs and signed rotation systems, recalling that a signed rotation system consists of an abstract graph $G$, a cyclic ordering of the half-edges at each vertex, and a $+$ or $-$ sign at each edge. Signed rotation systems are considered up to equivalence under vertex flips, that is, reversing the cyclic order at a vertex and simultaneously reversing the signs on the incident edges (twice in the case of a loop).  Under this equivalence, signed rotation systems correspond to cellularly embedded graphs.  An abstract graph with a cyclic ordering of the half-edges at the vertices, but without an assignment of signs to the edges, is also called a \emph{combinatorial map}, and we will use this term for emphasis when we are disregarding the signs.  See Mohar and Thomassen \cite{MT01} for the details of these constructions.  

We briefly review ribbon graphs, arrow presentations, and their equivalence (which is homeomorphism of the surface). We use standard notation: $V(G)$, $E(G)$, and $F(G)$ are the vertices, edges, and faces, respectively, of an embedded graph $G$, while $v(G)$, $e(G)$, and $f(G)$, respectively, are the numbers of such.
We will say that a loop ({\em i.e.}  an edge that is incident to exactly one vertex) in an embedded graph is {\em non-twisted} if a neighbourhood of it is orientable, and we say that the loop is {\em twisted} otherwise.

\begin{definition}
Following \cite{BR2}, a {\em ribbon graph} $G =\left(  \V(G),\E(G)  \right)$ is a (possibly non-orientable) surface with boundary, represented as the union of two  sets of topological discs: a set $\V (G)$ of {\em vertices}, and a set $\E (G)$  of {\em edges} such that 
\begin{enumerate} 
\item the vertices and edges intersect in disjoint line segments;
\item each such line segment lies on the boundary of precisely one
vertex and precisely one edge;
\item every edge contains exactly two such line segments.
\end{enumerate}
\end{definition}

Two ribbon graphs are said to be {\em equivalent} or {\em isomorphic} if there is a homeomorphism between them that preserves the vertex-edge structure.
 (In particular, up to this equivalence, the embedding of a ribbon graph in three-space is irrelevant.) 
 The {\em genus} of a ribbon graph  is its genus as a punctured surface. (The genus of a punctured surface is the genus of the closed surface obtained  by capping-off all of the punctures.)  
 We will say that a ribbon graph is {\em plane} if it is the neighbourhood of a plane graph, or equivalently, if the ribbon graph is a genus zero surface. A ribbon graph is {\em orientable} if it is orientable as a surface.
  Ribbon graphs are  \emph{band decompositions} (see Gross and Tucker  \cite{GT87}), with the 2-band interiors removed (the 2-bands correspond to faces). Note that in this context, $f(G)$ is just the number of boundary components of the surface with boundary comprising the ribbon graph $G$.

\begin{definition}[Chmutov \cite{Ch1}]
An {\em arrow presentation} consists of  a set of circles,  each with a collection of disjoint,  labelled arrows, called {\em marking arrows}, indicated along their perimeters. Each label appears on precisely two arrows.
\end{definition}

  Two arrow presentations are {\em equivalent}  if one can be obtained from the other by reversing the direction of all of the marking arrows which belong to some subset of labels, or by relabelling the pairs of arrows.

The equivalence of these representations of embedded graphs is given rigorously in, for example \cite{GT87}; however it is intuitively clear.  If $G$ is a cellularly embedded graph, a ribbon graph representation results from taking a small neighbourhood  of the embedded graph $G$. This can be thought of as \emph{cutting out} the ribbon graph from the surface.  Neighbourhoods  of vertices of the graph $G$ form the vertices of a ribbon graph, and neighbourhoods of the edges of the embedded graph form the edges of the ribbon graph. 

On the other hand, if $G$ is a ribbon graph, we simply sew discs into each boundary component of the ribbon graph to get the desired surface. See Figure~\ref{ribbon 2-cell}, which shows a graph embedded in the projective plane.

\begin{figure}
\[\includegraphics[width=40mm]{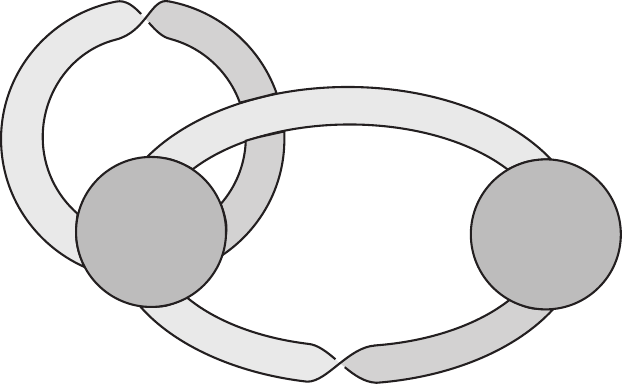} \quad
 \raisebox{10mm}{\includegraphics[width=18mm]{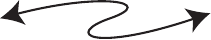}}\quad
\raisebox{0mm}{ \includegraphics[width=40mm]{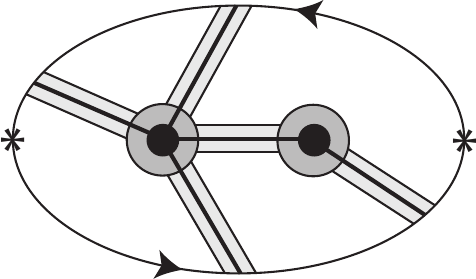}}\quad
 \raisebox{10mm}{\includegraphics[width=18mm]{doublearrow}}\quad
\raisebox{0mm}{ \includegraphics[width=40mm]{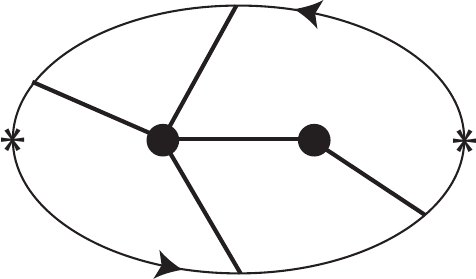}}.\]
\caption{Equivalence of ribbon graphs and 2-cell embeddings.}
\label{ribbon 2-cell}
\end{figure}

A ribbon graph can be obtained from an arrow presentation by viewing each circle as the boundary of a disc that becomes a vertex of the ribbon graph.  Edges are then added to the vertex discs by taking a disc for each label of the marking arrows.  Orient the edge discs arbitrarily and choose two non-intersecting arcs on the boundary of each of the edge discs. Orient these arcs according to the orientation of the edge disc.
Finally, identify these two arcs with two marking arrows, both with the same label, aligning the direction of each arc consistently with the orientation of the marking arrow. See Figure~\ref{arrows}.

\begin{figure}
\begin{tabular}{p{4cm}cp{4cm}cp{4cm}}
\includegraphics[height=15mm]{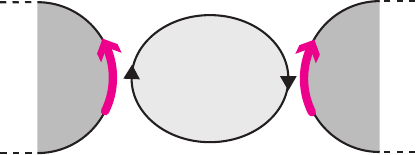}  &
\raisebox{6mm}{\hspace{3mm}\includegraphics[width=11mm]{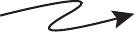}}&
\includegraphics[height=15mm]{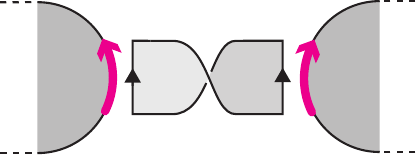} &
\raisebox{6mm}{\hspace{3mm}\includegraphics[width=11mm]{arrow}}&
 \includegraphics[height=15mm]{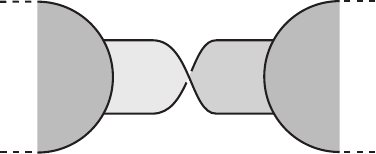}  \\
Two vertices and an edge disc && aligning the arrows &  & identifying arcs 
\end{tabular}
\caption{Constructing a ribbon graph from an arrow presentation. }
\label{arrows}
\end{figure}

Conversely, every ribbon graph  gives rise to an arrow presentation.  To describe a ribbon graph $G$ as an arrow presentation, start by arbitrarily orienting and labelling  each edge disc in $\E(G)$. This induces an orientation on the boundary of each edge in $\E(G)$.  Now, on the arc where an edge disc intersects a vertex disc, place a marked arrow on the vertex disc, labelling the arrow with the label of the edge it meets and directing it consistently with the orientation of the edge disc boundary. The boundaries of the vertex set marked with these labelled arrows give the arrow marked circles of an arrow presentation.  See Figure~\ref{ribbon arrow}.

Finally, the relation between signed rotations systems and cellularly embedded graphs is standard and well known.

\begin{figure}
\[ \raisebox{4mm}{\includegraphics[width=35mm]{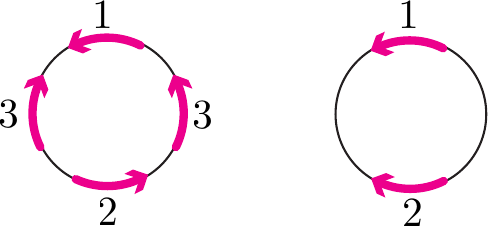}}\quad
\raisebox{10mm}{\includegraphics[width=18mm]{doublearrow}} \quad
 \includegraphics[width=40mm]{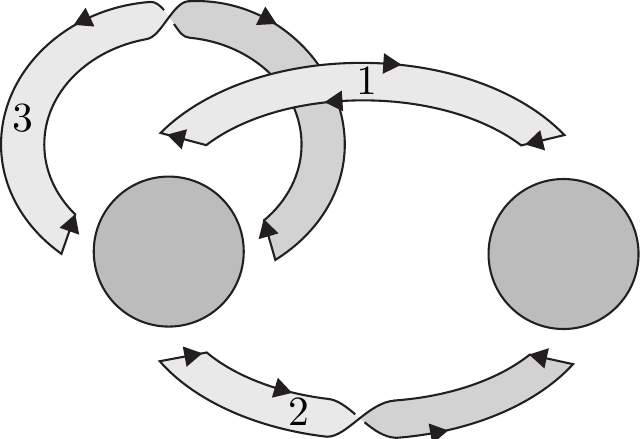} \quad
 \raisebox{10mm}{\includegraphics[width=18mm]{doublearrow}}\quad
\raisebox{1mm}{ \includegraphics[width=40mm]{arrpresexamp3}}.\]
\caption{Equivalence of arrow presentations and ribbon graphs.}
\label{ribbon arrow}
\end{figure}

\bigskip

These ways of representing embedded graphs are equivalent, and furthermore, two graphs are equivalent as ribbon graphs if and only if they are equivalent as cellularly embedded graphs if and only if they are equivalent as arrow presentation if and only if they are equivalent as signed rotation systems.  When two embedded graphs are equivalent in this sense, we will often say that they are {\em equivalent as embedded graphs} or are {\em isomorphic}, to distinguish this kind equivalence from other graph equivalences that arise in this paper  (we will be interested in how different notions of equivalence generate extensions of the concept of the dual of an embedded graph). 

Two other natural notions of equivalence are important here. There is a natural forgetful functor from the category of embedded graphs to the category of graphs which forgets all of the information concerning the embedding. Thus every embedded graph gives rise to an (abstract) graph by forgetting the embedding and retaining only the vertices and their adjacency information. A cellularly embedded graph $G$ consists of a graph $G'$ and a cellular embedding $\psi$ of $G'$ into a surface. We say that (the abstract graph) $G'$ is the  {\em underlying graph} of (the embedded graph) $G$. We will say that two embedded graphs are {\em equivalent as abstract graphs} if their underlying graphs are isomorphic (or equivalent) as abstract graphs.  We will usually denote isomorphism of abstract graphs by $G \cong F$, reserving $G = F$ for when $G$ and $F$ are isomorphic as embedded graphs.

We will also be interested in a notion of equivalence that is stronger than equivalence as abstract graphs, but weaker than equivalence as embedded graphs.
If we are given an embedded graph $G$, then we can arbitrarily assign orientations to  a neighbourhood of each vertex of $G$. By reading off the incident half-edges at each vertex of $G$ according to these orientations, we obtain a cyclic order  of the half-edges incident to each vertex. By equipping the underlying abstract graph of $G$ with this cyclic order we obtain a combinatorial map $M$ (that is, an unsigned rotation system). We call $M$ an {\em  underlying combinatorial map} of $G$, and we call $G$ a \emph{local embedding of $M$}. Of course, in general an embedded graph will have more than one underlying combinatorial map, and a combinatorial map will have more than one local embedding.  It is easily seen, however, that all of the  underlying combinatorial maps  of a given embedded graph $G$ are related by reversing the cyclic orderings of the incident half-edges at some of the vertices of the combinatorial map. 

In general, a {\em local embedding} of a combinatorial map $M$ is a cellular embedding of the underlying abstract graph of $M$ into some surface such that the cyclic order at the vertices of $M$ is  preserved with respect to one or the other of the local orientation of a neighbourhood of the image of each vertex. (We use the term `local embedding' to emphasize the fact that we are considering embeddings of $M$ that do not necessarily arise from the the standard identification between combinatorial maps and  graphs embedded in an orientable surface.)

We will say that two embedded graphs $G$ and $H$ are {\em equivalent as locally embedded maps} if and only if it is possible to assign orientations to the vertices of $G$ and $H$ such that the resulting underlying combinatorial maps are equal. 

As an example, consider the combinatorial map $M$ on two vertices, with edges $e, f, g, h$,
in that cyclic order at both vertices, and the combinatorial map $N$ on two vertices with
with cyclic order $e, f, g, h$ at one vertex and cyclic order $e, f, h, g$ at the other vertex.
Then the two embedded graphs obtained as the standard orientable embeddings of $M$ and $N$ are equivalent as abstract graphs, but not as locally embedded maps. On the other hand, the two embedded graphs obtained by locally embedding $M$ on the sphere or the torus are not equivalent as embedded graphs, but are equivalent as locally embedded maps.

We note that $G$ and $H$ are equivalent as locally embedded maps if and only if they have the same set of underlying combinatorial maps, and that $M$ is an underlying combinatorial map of $G$ if and only if $G$ is a local embedding of $M$.  Thus, if $G$ is an embedded graph and $M$ is any of its underlying combinatorial maps, then $\{ \text{local embeddings of } M\} 
=\{ H \;| \;  H \text{ and } G \text{ are equivalent as locally embedded maps} \}.$

The hierarchy of these equivalences is perhaps clearest in the language of signed rotations systems.   Two graphs $G$ and $H$ represented by signed rotation systems are equivalent as abstract graphs if the abstract graphs in their signed rotation system representations are isomorphic (i.e., we `forget' both the cyclic orders and the edge signs). They are equivalent as locally embedded graphs if the abstract graphs are isomorphic and the cyclic orders are the same (i.e., we `forget' just the edge signs), remembering that cyclic orders may be reversed.  Finally, $G$ and $H$ are equivalent as embedded graphs if they have the same signed rotation systems. This hierarchy is also implicit in Gross and Tucker \cite{GT87}.

Note that, given an arrow presentation of an embedded graph $G$, one can obtain an underlying combinatorial map by forgetting the directions on the arrows and retaining only their cyclic order about the vertex disc.

\medskip

Medial graphs play a central role throughout this paper.  If $G$ is cellularly embedded, we constructed its medial graph $G_m$ exactly as in the plane case, by placing a vertex of degree 4 on each edge, and then drawing the edges of the medial graph by following the face boundaries of $G$. There is a natural embedding of the medial graph $G_m$, viewed as a ribbon graph, into $G$ viewed as a ribbon graph.  This results from drawing the edges of $G_m$ very close to the the edges of $G$ in the surface, and taking a smaller neighbourhood of $G_m$ than the neighbourhood of $G$ when cutting out the ribbon graphs from the surface. See Figure~\ref{fig.meddef}.  An example of a medial ribbon graph is given in Figure~\ref{fig.medex}. Consistent with this definition is that the medial graph of an isolated vertex is an isolated face, and we adopt this convention.

\begin{figure}
\[\includegraphics[height=2cm]{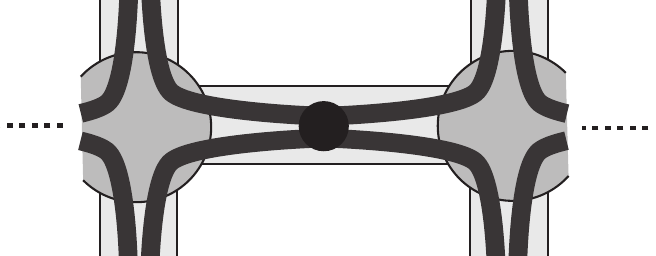}\]
\caption{The formation of a medial ribbon graph.}
\label{fig.meddef}
\end{figure}

 \begin{figure}
\begin{tabular}{p{4cm}cp{4cm}cp{4cm}}
\includegraphics[width=30mm]{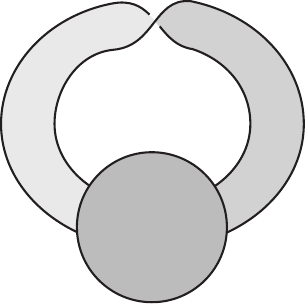} && \includegraphics[width=30mm]{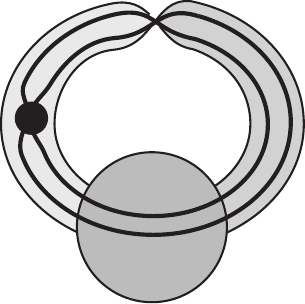} && \includegraphics[width=30mm]{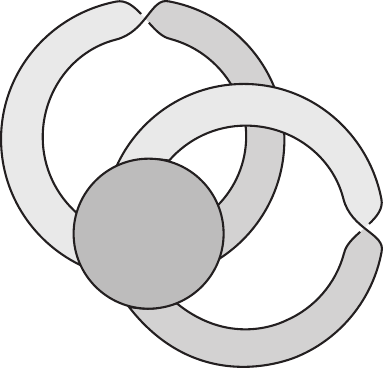}  \\
A ribbon graph $G$. && The  medial graph $G_m$ drawn inside $G$. &  & $G_m$ presented as a ribbon graph.
\end{tabular}
\caption{An example of a medial ribbon graph.}
\label{fig.medex}
\end{figure}

\section{Twisted duality and the ribbon group action}\label{s.td}

Duality in the plane is constrained by the fact that the result of taking the dual of a plane graph is again a plane graph.  
Working with embedded graphs, however, allows greater flexibility. Here we move out of the class of plane graphs by  allowing two operations on the edges of an embedded graph $G$. The first is forming the dual with respect to an individual edge, as defined by Chmutov in \cite{Ch1}, and the second is adding a half-twist to an edge. These two operations give rise to a group action of ${S_3}^{e(G)}$ on $G$, which we call the \emph{ribbon group action}. The ribbon group action is the foundation of many of the results in the later sections of this paper.

\subsection{The ribbon group action}\label{ss.rga}

As often is the case, we begin with graphs equipped with a linear ordering on their edges, and then show that our constructions are independent of these orderings.  We first give the half-twist and dual operations with respect to single distinguished edges.  We  define the \emph{ribbon group action} of $\fG^{e(G)}$ on graphs with a linear ordering on their edges, where $\fG\cong S_3$. We then provide a more efficient notation which is independent of the ordering of the edges and also establish some elementary properties of the ribbon group action. 

We let $\calG$ denote the set of embedded graphs considered up to homeomorphism (we identify an embedded graph $G$ with its homeomorphism class), and we let $\calG_{(n)} \subseteq \calG$ denote the set of embedded graphs  with exactly $n$ edges. We write \[\calG_{or} = \left\{  (G,\ell)| G \in \calG \text{ and } \ell \text{ is a linear ordering of the edges} \right\}\] for the set of embedded graphs with ordered edges, and \[\calG_{or (n)} = \left\{  (G,\ell)| G \in \calG_{(n)} \text{ and } \ell \text{ is a linear ordering of the edges} \right\}\] for those with exactly $n$ edges.

\begin{definition}\label{def.ops}

Let $(G,\ell) \in \calG_{or}$ and suppose $e_i$ is the $i^\text{th}$ edge in the ordering $\ell$. Also, suppose $G$ is given in term of its arrow presentation, so $e_i$ is a label of a pair of arrows.  

The \emph{half-twist of the $i^{\text{th}}$ edge} is $(\tau, i)(G,\ell)=(H,\ell)$ where $H$ is obtained from $G$  by reversing the direction of exactly one of the $e_i$-labelled arrows of the arrow presentation, as in Figure~\ref{taudelta}.  $H$ inherits its edge order $\ell$ in the natural way from $G$.  

The \emph{dual with respect to the $i^{\text{th}}$ edge} is $(\delta,i) (G,\ell) = (H,\ell)$, where $H$ is obtained from $G$ as follows.  Suppose $A$ and $B$ are the two arrows labelled $e_i$ in the arrow presentation of $G$.  Draw a line segment with an arrow on it directed from the the head of $A$ to the tail of $B$, and a line segment with an arrow on it directed from the head of $B$ to the tail of $A$.  Label both of these arrows $e_i$, and delete $A$ and $B$ with the arcs containing them. The line segments with their arrows become arcs of a new circle (or circles) in the arrow presentation of $H$.  As with the half-twist, $H$ inherits its edge order $\ell$ from $G$. See Figure~\ref{taudelta}.

\end{definition}

\begin{figure}
\[  \tau\left( \;\; \raisebox{-4mm}{\includegraphics[height=10mm]{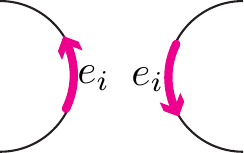}}\;\; \right) \; = \; \;\raisebox{-4mm}{\includegraphics[height=10mm]{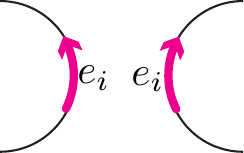}} \hspace{2cm} \delta\left( \;\; \raisebox{-4mm}{\includegraphics[height=10mm]{a1ei}} \;\; \right) \; = \; \; \raisebox{-4mm}{\includegraphics[height=10mm]{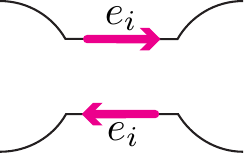}} \]
%
%
\caption{$\tau$ and $\delta$ with arrow presentations. }
\label{taudelta}
\end{figure}

Note that the half-twist operation does not change the number of vertices nor the cyclic order of incident half-edges at a vertex in a graph, but the dual operation may change either.

We make the simple but important observation that the half-twist and dual operations when applied to \emph{different} edges commute.

\begin{proposition}\label{switch}

If $i \neq j$ and $\g, \h \in \{ \tau, \delta \}$, then $(\g,i) ((\h,j)(G, \ell)) = (\h,j)((\g,i)(G,\ell))$.

\end{proposition}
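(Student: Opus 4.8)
The plan is to prove the commutativity of these edge operations by exploiting the fact that, for distinct edges $e_i$ and $e_j$, each operation $(\g,i)$ modifies only the arrows labelled $e_i$ and leaves all other arrows (in particular the $e_j$-labelled ones) entirely untouched. Since both operations are defined purely in terms of local surgery on arrows of a fixed label, the two surgeries act on disjoint pieces of combinatorial data, and disjoint local modifications of an arrow presentation commute. I expect this to be essentially a bookkeeping argument rather than a deep one.

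First I would set up notation by writing the arrow presentation of $(G,\ell)$ as a collection of circles carrying labelled arrows, and I would isolate the relevant data as the pair of $e_i$-arrows, call them $A_i, B_i$, and the pair of $e_j$-arrows, $A_j, B_j$, noting that $\{A_i,B_i\}$ and $\{A_j,B_j\}$ are disjoint since each label appears on exactly two arrows (by the definition of arrow presentation). Next I would record the key local observation, which is the heart of the proof: inspecting Definition~\ref{def.ops}, the operation $(\tau,i)$ only reverses the direction of one $e_i$-arrow, and the operation $(\delta,i)$ only deletes $A_i$ and $B_i$ together with their containing arcs and re-inserts two new $e_i$-labelled arrows joining heads to tails. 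In either case the cyclic position, direction, and label of every arrow \emph{not} labelled $e_i$ is preserved, and in particular the $e_j$-arrows and the arcs carrying them are unaffected. The symmetric statement holds for $(\h,j)$ with the roles reversed.

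With this established, I would argue that applying $(\g,i)$ then $(\h,j)$ produces exactly the same arrow presentation as applying $(\h,j)$ then $(\g,i)$: in both orders the final $e_i$-arrows are determined solely by the initial configuration of $A_i,B_i$ (which $(\h,j)$ never disturbs), and the final $e_j$-arrows are determined solely by the initial configuration of $A_j,B_j$ (which $(\g,i)$ never disturbs), while all remaining arrows are carried along unchanged by both orders. The $\delta$ operation can split or merge circles, so I would be slightly careful to note that the splitting/merging caused by $(\delta,i)$ is governed only by how the $e_i$-arcs sit on their circles, data which $(\h,j)$ leaves invariant, and vice versa; hence the two surgeries never interfere even when both are $\delta$. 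Finally I would observe that both operations leave the edge order $\ell$ fixed by construction, so the resulting ordered embedded graphs agree.

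The main obstacle, such as it is, will be handling the $(\delta,i)$--$(\delta,j)$ case cleanly, since $\delta$ is the only operation that alters the underlying circle structure (by cutting arcs and forming new circles). I would want to make precise that the ``regrouping'' of arcs into circles induced by the two dualizations depends only on the incidence pattern of the $e_i$-arcs and $e_j$-arcs respectively, and that these patterns are independent of one another because the four arrows are carried on disjoint arcs. Once that independence is spelled out, commutativity follows immediately, and an appeal to Figure~\ref{taudelta} makes the local picture transparent; the remaining cases ($\tau$--$\tau$, $\tau$--$\delta$) are strictly simpler and reduce to the same disjointness principle.
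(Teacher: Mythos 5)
Your proposal is correct, and it matches the paper's (implicit) reasoning: the paper offers no proof at all, presenting the proposition as a ``simple but important observation,'' and the disjoint-support argument you give --- each of $(\g,i)$ and $(\h,j)$ performs a local re-splicing only at the arrows carrying its own label, and surgeries with disjoint supports commute --- is exactly the justification being taken for granted. One caution on wording: your claim that $(\h,j)$ leaves invariant ``how the $e_i$-arcs sit on their circles'' is false if read globally (when $\h=\delta$, whether $A_i$ and $B_i$ lie on the same circle can change, so the intermediate circle structures in the two orders of composition genuinely differ); what is invariant, and what your final paragraph correctly identifies as the data that matters, is the local attachment pattern of arc-ends at the heads and tails of the $e_i$-arrows, from which the final gluing --- and hence the final circle structure --- is determined identically in either order.
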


However,  $(\tau, i)$ and $(\delta,i)$ do {\em not} commute when applied to the same edge. In fact, we will see they induce a group action of $S_3$ on that edge.

We use the following notation to denote compositions applied to the same edge:  \[(\g\h, i)(G,\ell):=(\g,i)((\h,i)(G,\ell)),\] where $\g, \h \in \{ \tau, \delta \}$.  We also define $(1,i)(G,\ell):=(G,\ell)$.  Thus, we can consider the action of $(\g,i)$ on $(G,\ell)$ where $\g$ is a word in $\{ \tau, \delta \}$.  

\begin{lemma}\label{l.action1}
If $(G,\ell) \in \calG_{or}$ then, for each fixed $i$, \[{(\tau^2, i)}(G,\ell)={ (\delta^2,i)}(G,\ell)=  ({(\tau \delta)}^3, i)(G,\ell)=1(G,\ell).\]  Therefore, given a fixed $i$, there is an action of the symmetric group $S_3$, with the presentation \[S_3 \cong \fG := \langle  \delta, \tau   \; |\;   \delta^2, \tau^2, (\tau \delta)^3   \rangle ,\] on $\calG_{or}$.

\end{lemma}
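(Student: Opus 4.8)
The plan is to verify the three relations $(\tau^2,i)$, $(\delta^2,i)$, and $((\tau\delta)^3,i)$ all act trivially on $(G,\ell)$, working entirely in the arrow-presentation model, since each operation is defined locally on the pair of arrows labelled $e_i$. Because Definition~\ref{def.ops} specifies $\tau$ and $\delta$ purely in terms of a local modification near the two $e_i$-arrows (and the edge order is always inherited), I only need to track what happens to this local picture; everything away from $e_i$ is untouched. The relation $(\tau^2,i)(G,\ell)=(G,\ell)$ is the easiest: $\tau$ reverses exactly one of the two $e_i$-arrows, so applying $\tau$ twice reverses the same arrow twice, returning it to its original direction. (Here I would note that the equivalence of arrow presentations allows reversal of \emph{either} arrow to represent $\tau$, and I should check this is consistent — that reversing arrow $A$ twice, or reversing $A$ then $B$, lands in the same equivalence class; by the equivalence relation on arrow presentations, reversing both arrows of a single label is a permitted move, so all choices agree.)

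For $(\delta^2,i)(G,\ell)=(G,\ell)$, I would start from arrows $A,B$ labelled $e_i$, form the two new arrows from head of $A$ to tail of $B$ and from head of $B$ to tail of $A$ as in Figure~\ref{taudelta}, and then apply $\delta$ a second time to this new pair. The computation is a finite case-check: I track the heads and tails through the second application and verify the resulting arrow configuration is equivalent (up to the allowed reversals and relabelling) to the starting one. I expect this to close up after drawing the picture carefully, recovering $A$ and $B$ in their original positions and orientations.

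The main obstacle is the braid-type relation $((\tau\delta)^3,i)(G,\ell)=(G,\ell)$, which is the substantive content of the lemma. The approach is to compute the composite $\tau\delta$ on the local arrow picture once, read off its effect as a concrete transformation of the two $e_i$-arrows, and then show that iterating this transformation three times is the identity (again up to arrow-presentation equivalence). Since I already will have established $\tau^2=\delta^2=1$ on the edge, the cleanest route is to observe that $\delta$ and the involution $\tau\delta\tau\cdots$ together generate a group, and to identify the order of $\tau\delta$ as exactly $3$ by direct tracking of the six intermediate arrow configurations. Concretely, I would label the endpoints and follow one full cycle, verifying that after three applications of $\tau\delta$ every arrow returns to its original endpoint and direction. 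Once these three relations hold on each fixed edge $i$, the presentation $\langle \delta,\tau \mid \delta^2,\tau^2,(\tau\delta)^3\rangle$ is precisely that of $S_3$, so the assignments $\tau\mapsto(\tau,i)$ and $\delta\mapsto(\delta,i)$ extend to a well-defined action of $S_3$ on $\calG_{or}$; the delicate point to articulate is that the local nature of the operations guarantees the relations hold globally on $(G,\ell)$, not merely on the local fragment.
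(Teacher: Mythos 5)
Your proposal is correct and follows essentially the same route as the paper: the paper's proof is exactly a direct local computation on the arrow presentation, tracking the pair of $e_i$-labelled arrows through successive applications of $\tau$ and $\delta$ to verify $\tau^2 = \delta^2 = (\tau\delta)^3 = 1$ pictorially (the $(\tau\delta)^3$ case being a chain of six intermediate configurations, just as you describe). Your added remarks --- that reversing either arrow yields equivalent presentations, and that locality of the operations makes the relations global --- are sound points the paper leaves implicit, but they do not change the argument.
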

\begin{proof}
The following calculations verify that ${ (\delta^2,i)}(G,\ell)= {(\tau^2, i)}(G,\ell)= ((\tau \delta)^3,i)(G,\ell)=1(G,\ell)$.
Since \[    \raisebox{-4mm}{\includegraphics[height=10mm]{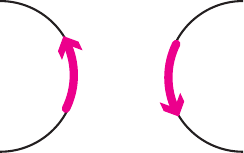}} \;\; \overset{(\tau, i)}{\longmapsto} \;\; 
 \raisebox{-4mm}{\includegraphics[height=10mm]{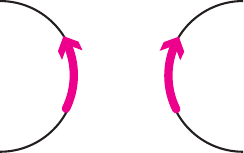}} \;\; \overset{(\tau, i)}{\longmapsto} \;\; 
  \raisebox{-4mm}{\includegraphics[height=10mm]{a1}},
 \]
 then ${(\tau^2, i)}=1$. Also 
\[    \raisebox{-4mm}{\includegraphics[height=10mm]{a1}} \;\; \overset{(\delta,i)}{\longmapsto} \;\; 
 \raisebox{-4mm}{\includegraphics[height=10mm]{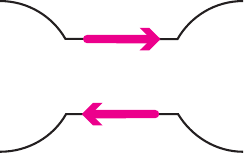}} \;\; \overset{(\delta,i)}{\longmapsto} \;\; 
  \raisebox{-4mm}{\includegraphics[height=10mm]{a1}},
 \]
 giving the identity ${(\delta^2,i)}=1$. Finally,
\begin{multline*}    \raisebox{-4mm}{\includegraphics[height=10mm]{a1}} \;\; \overset{(\tau, i)}{\longmapsto} \;\; 
 \raisebox{-4mm}{\includegraphics[height=10mm]{a2}}  \;\; \overset{(\delta,i)}{\longmapsto} \;\; 
  \raisebox{-4mm}{\includegraphics[height=10mm]{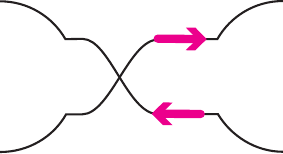}}  \;\; \overset{(\tau, i)}{\longmapsto} \;\; 
   \raisebox{-4mm}{\includegraphics[height=10mm]{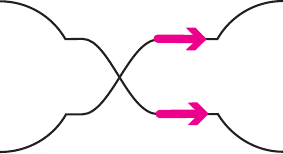}}  \;\; 
   \\  \\\overset{(\delta,i)}{\longmapsto} \;\; 
     \raisebox{-4mm}{\includegraphics[height=10mm]{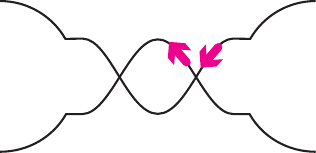}}  \;\; = \;\; 
     \raisebox{-4mm}{\includegraphics[height=10mm]{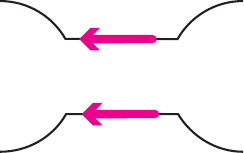}}  \;\; \overset{(\tau, i)}{\longmapsto} \;\; 
      \raisebox{-4mm}{\includegraphics[height=10mm]{a3}}  \;\; \overset{(\delta,i)}{\longmapsto} \;\; 
 \raisebox{-4mm}{\includegraphics[height=10mm]{a1}},  \end{multline*}
and thus $(({\tau \delta})^3, i)=1$.

We have shown that $\delta$ and $\tau$ satisfy the defining relations of the presentation.  It remains to show that  $\delta$ and $\tau$ satisfy no additional relations. Since every quotient group of $S_3$ is abelian, it is enough to show that $\tau \delta  (G,\ell) \neq \delta \tau (G, \ell) $ for some $(G,\ell)$. This fact is readily verified.  
\end{proof}

The action in Lemma~\ref{l.action1} for fixed $i$ now extends to a group action of ${S_3}^{e(G)}$ on $\calG_{or(n)}$.

\begin{definition}\label{ribbon group}
We call ${S_3}^{n} \cong \fG^{n}$ the \emph{ribbon group for $n$ edges} and define the \emph{ribbon group action} of the ribbon group on $\calG_{or(n)}$ by:
\begin{eqnarray*} ( \g_1, \g_2,\g_3, \ldots , \g_n )(G,\ell) &= (\g_n,n)((\g_{n-1}, n-1) \ldots ((\g_2,2)(( \g_1,1)(G,\ell))) \ldots) \\
&=  ( (\g_n,n)\circ(\g_{n-1}, n-1)\circ \ldots \circ (\g_2,2) \circ ( \g_1,1)   ) (G,\ell) 
,\end{eqnarray*}
where  $\g_i \in \fG$ for all $i$.
\end{definition}

With Definition~\ref{ribbon group}, if $(G,\ell) \in \calG_{or(n)}$, then we can view $(\tau,i)$ as an element of $\fG^{n}$ of the form $(1, \ldots, \tau, \ldots, 1)$, with $\tau$ as the $i^{\text{th}}$ coordinate, and similarly for $\delta$ and the other elements of $\fG$.

\subsection{Twisted duals}\label{TwDu}

We now define twisted duality for graphs without any edge ordering.  Our final definition of a twisted dual of $G$ will be of the form
 \[
G^{\prod{\g_i(A_i)}},
\] 
\noindent
where the $A_i$'s  partition  the edge set, and the $\g_i$'s are in $\fG$. However, we need some preliminary definitions to make sense of this expression. 

We begin with an obvious proposition.

\begin{proposition} \label{permute}

If $(G,\ell) \in \calG_{or(n)}$, $\vh \in {\fG}^n$, and $\sigma \in S_n$, then $\vh(G,\ell) = \sigma(\vh) (G, \sigma(\ell))$, where $\sigma$ acts on $\vh$ by permuting the order of the elements of the $n$-tuple. 

\end{proposition}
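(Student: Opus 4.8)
The plan is to unwind both sides of the asserted identity using the definition of the ribbon group action (Definition~\ref{ribbon group}) as a composition of the single-edge operations $(\g,i)$, and then to check that both compositions apply exactly the same element of $\fG$ to each individual edge of $G$. First I would fix notation for the two permutation actions involved. Writing $\vh = (\g_1, \dots, \g_n)$, I let $\sigma(\vh)$ be the tuple whose entry in position $\sigma(i)$ is $\g_i$, and I let $\sigma(\ell)$ be the linear order whose $\sigma(i)$-th edge is the $i$-th edge of $\ell$. With these compatible conventions, the substance of the proposition reduces to the claim that, edge by edge, the two sides perform the identical operation.

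The key step is to decouple the outcome of the action from the order in which the edges are processed. By definition the left-hand side is $(\g_n, n)\circ \cdots \circ (\g_1,1)$ applied to $(G,\ell)$, which applies $\g_i$ to the $i$-th edge of $\ell$ for every $i$, while the right-hand side is the analogous composition for $\sigma(\vh)$ acting on $(G,\sigma(\ell))$, which applies the $j$-th entry of $\sigma(\vh)$ to the $j$-th edge of $\sigma(\ell)$. Setting $j=\sigma(i)$, the $j$-th entry of $\sigma(\vh)$ is $\g_i$ and the $j$-th edge of $\sigma(\ell)$ is precisely the $i$-th edge of $\ell$; hence the right-hand side also applies $\g_i$ to the $i$-th edge of $\ell$, for every $i$. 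Thus the two sides differ only in the order in which these single-edge operations are carried out. Since operations performed on distinct edges commute (Proposition~\ref{switch}) and each edge receives only a single element of $\fG$, both compositions realize the same total transformation, so the resulting embedded graphs coincide (with the recorded orderings $\ell$ and $\sigma(\ell)$ related by $\sigma$ as stated).

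I expect the only (modest) obstacle to be bookkeeping: one must arrange for $\sigma$ to act compatibly on the order $\ell$ and on the tuple $\vh$, so that the permutation applied to the processing positions is exactly cancelled by the permutation applied to the group elements. This compatibility is what guarantees that each physical edge inherits the same element of $\fG$ on both sides. The rearrangement of the composition itself is harmless once Proposition~\ref{switch} is invoked, since $(\g_i,i)$ and $(\g_j,j)$ commute for $i\neq j$. This order-independence is precisely what is needed to justify the forthcoming edge-set notation $G^{\prod \g_i(A_i)}$ for twisted duals.
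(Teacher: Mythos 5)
Your proof is correct and supplies exactly the argument the paper leaves implicit: the paper states Proposition~\ref{permute} without proof, calling it obvious, and the intended justification is precisely what you give --- both compositions apply the same element of $\fG$ to the same physical edge, and the commutativity of single-edge operations on distinct edges (Proposition~\ref{switch}) lets one reorder the composition. Nothing further is needed.
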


Proposition~\ref{permute} ensures that $G^{\prod{\g(A_i)}}$, as given below, is well-defined, that is, it is independent of the edge ordering $\ell$.

\begin{definition}\label{unordered twistdual}

Suppose $G \in \calG_{(n)}$, $A, B \subseteq E(G)$ and  $\g, \h \in \fG$.  Define $G^{\g(A)}$ as follows. Let $\ell$ be an arbitrary ordering $(e_1, \ldots, e_n)$ of the edges of $G$, and define $\vg_{A} :=(\epsilon_1, \ldots, \epsilon_n) \in \fG^n$, where $\epsilon_i = \g$ if $e_i \in A$ and $\epsilon_i=1$ else. Then, 
\[
G^{\g(A)}:=\vg_{A}(G,\ell).
\] 
Moreover, we establish the following notational conventions:
\[
G^{\g(A)\h(B)}:=(G^{\g(A)})^{\h(B)} \text{, and}  \quad
G^{\g\h(A)}:=G^{\h(A)\g(A)}.
\]

\end{definition}

\begin{proposition}\label{reduce} 
If, for all $i$, we have $\h_i \in \fG$ and $B_i \subseteq E(G)$, then any expression of the form  
$
G^{\prod {\h_i(B_i)}}
$
 is equal to 
$
G^{\prod^6_{i=1}{\g_i(A_i)}}
$, 
where the $A_i\subseteq E(G)$ are pairwise disjoint with $\cup_i {A_i}=E(G)$, and where $\g_1=1, \g_2=\tau , \g_3=\delta , \g_4=\tau \delta , \g_5= \delta \tau$, and  $\g_6 = \tau \delta \tau \in \fG$.  Here, the terms in the product  $\prod {\h_i(B_i)}$ do not necessarily commute, while the terms in the product  $\prod^6_{i=1}{\g_i(A_i)}$ do commute with each other.

\end{proposition} 

Proposition~\ref{reduce} follows from repeated applications of Definition~\ref{unordered twistdual}, and the fact that   the terms in the product $\prod^6_{i=1}{\g_i(A_i)}$ commute follows from the fact that the $A_i$'s are disjoint and Proposition~\ref{switch}.  Hereafter, we will customarily omit any factors of the form $1(A_1)$ or $\g_i(\emptyset)$ in these expressions. Also, if $A_i$ is given explicitly by a list of edges, to simplify notation, we will omit the set brackets, for example, writing $\tau(e,f)$ for $\tau(\{e,f\})$.  

As an example of Proposition~\ref{reduce}, if $G$ is an embedded graph with edges $d, e, f, g, h$, then
\[
G^{\tau(d,e,f) \delta (e,f,g)} = (G^{\tau(d,e,f)})^{\delta (e,f,g)} = G^{\tau(d) (\tau(e,f) \delta(e,f)) (\delta(g))} = G^{\tau(d) \delta \tau(e,f) (\delta(g))}= G^{\tau(d)\delta(g) \delta \tau(e,f)}.
\] 
The edge $h$ is unaffected.

Geometrically, these maps act on an edge $e$ of a ribbon graph in the following way: $\tau$ adds a half-twist to the edge $e$, and $\delta$ forms the partial dual  at the edge $e$. Products of $\tau$ and $\delta$ are applied to the edge successively.  An illustration of the actions of $\tau$ and $\delta$ on an embedded  graph is given in Example \ref{e.pd}.
 \begin{example}\label{e.pd}
 If $G$ is an embedded graph with $E(G)=\{e_1,e_2\}$, with the order $(e_1,e_2)$, represented as an arrow presentation and as a ribbon graph shown below,
\[
\raisebox{8mm}{$G =$}   \;\;\raisebox{1mm}{\includegraphics[width=30mm]{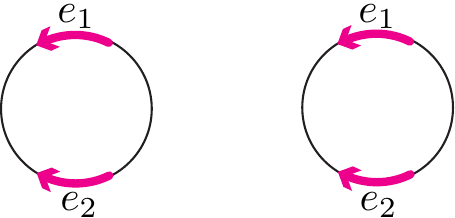}} \; \;\raisebox{8mm}{$=$} \;  \; \raisebox{0mm}{\includegraphics[width=34mm]{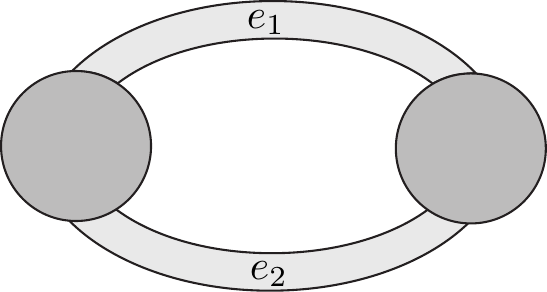}} \;\raisebox{8mm}{,}
\]
then we have
\[
\raisebox{8mm}{$(\tau, 1)(G) =G^{\tau(e_1)} =$}   \;\;\raisebox{1mm}{\includegraphics[width=30mm]{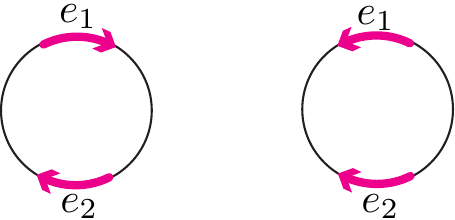}} \; \;\raisebox{8mm}{$=$} \;  \; \raisebox{0mm}{\includegraphics[width=34mm]{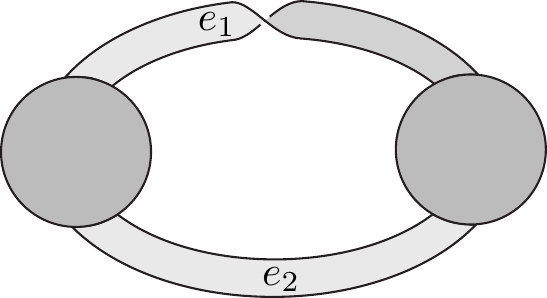}} \;\raisebox{8mm}{,}
\]
and 
\[
\raisebox{8mm}{$(\delta , 1)(G) =G^{\delta(e_1)} =$}   \;\;\raisebox{4mm}{\includegraphics[width=30mm]{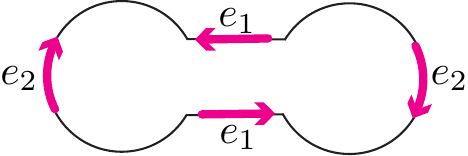}} \; \;\raisebox{8mm}{$=$} \;  \; \raisebox{1mm}{\includegraphics[width=24mm]{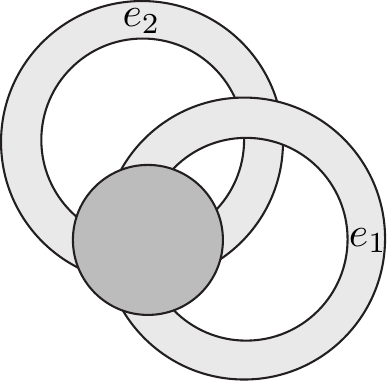}} \;\raisebox{8mm}{.}
\]

The full orbit of $G$ is given in Figure~\ref{Fig:orbit}.

\begin{figure}
\[\includegraphics[height=4cm]{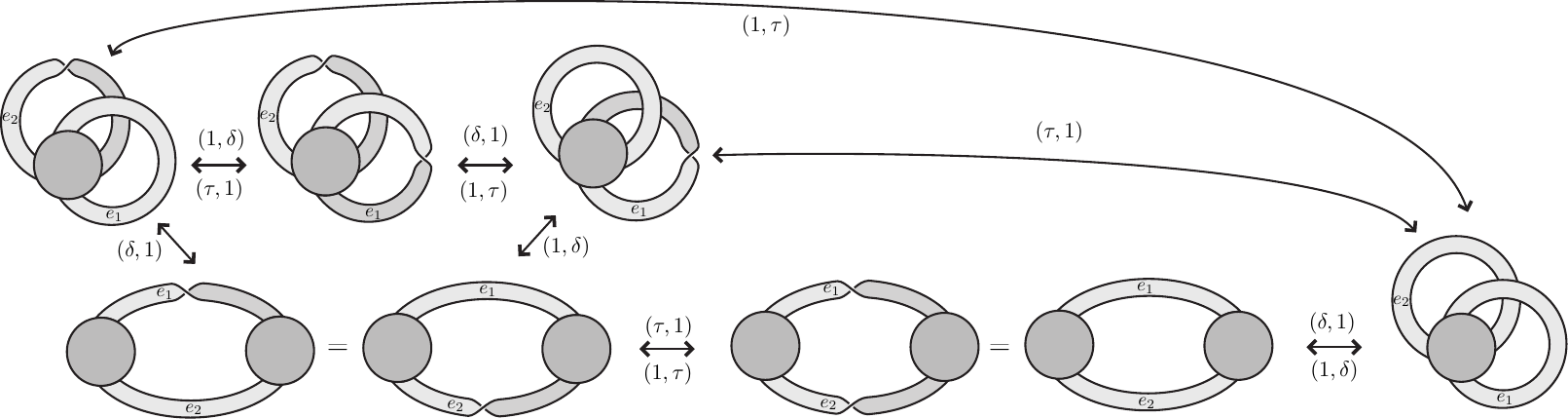}\]
\caption{The orbit of a plane digon under the ribbon group action.}
\label{Fig:orbit}
\end{figure}
\end{example}

\bigskip

We now have the following definition of a \emph{twisted dual of an embedded graph}.

\begin{definition}\label{geo twistdual}
If $G$ is an embedded graph, then $H$ is a \emph{twisted dual} of $G$ if it can be written in the form
\[
H=G^{\prod^6_{i=1}{\g_i(A_i)}},
\] 
  
\noindent
where the $A_i$'s partition $E(G)$, and the $\g_i$'s are the six elements of $\fG$. 

Equivalently, if $\ell$ is an arbitrary ordering of the edges of $G$, then $H$ is a \emph{twisted dual} of $G$ if $\vg (G,\ell)= (H,\ell)$ for some $\vg \in \fG^{e(G)}$, \emph{i.e.} $H$ is a twisted dual of $G$ if $(H,\ell)$ is in the orbit of $(G,\ell)$ under the ribbon group action. Note that twisted duality is symmetric, so we may speak of $H$ and $G$ as being twisted duals of one another.
\end{definition}

The linear ordering of the edges is necessary to define a group action,
 but is not necessary for the construction of twisted duals.  Thus we will use Definition~\ref{ribbon group} when we wish to emphasize the group action, and Definition~\ref{geo twistdual} to emphasize the geometry. However, since moving between the two viewpoints is so natural, our language may not always preserve the distinction, as for example, we may speak of $\fG^n$ ``acting'' on an unordered graph.

One of our main interests in this paper is the orbits $Orb(G,\ell):=\fG^{e(G)} (G, \ell)=\{ \vg(G, \ell) \; | \; \vg \in\fG^{e(G)}  \}$ of the group action.  With slight abuse of terminology, we define \[Orb(G):=\{H : (H, \ell) \in Orb(G, \ell) \text{ for some edge order } \ell \}.\]

\begin{remark}
Although one of our primary objectives in this paper is investigating and characterizing various orbits of the ribbon group action, the stabilizer subgroups are also of interest.  In particular, the stabilizer subgroup of $\fG^{e(G)}$, like the automorphism group of a graph, is an invariant of $G$. The study of stabilizers also encompasses the study of self-dual and self-Petrial graphs.  Although we do not investigate the stabilizers here, they certainly  warrant study. Another interesting question, again outside the scope of this paper, is how ribbon graph theoretic properties, such as the genus or number of vertices, vary over the elements in an orbit. 
\end{remark}

\subsection{Some properties of the ribbon group action}

In general, the group action $\fG^n$ on $\calG_{or(n)}$ is faithful and transitive, but is not  
free and has no fixed points. However, it turns out that stronger,  
order-independent analogues of these properties hold for twisted  
duality. We will state and prove these stronger analogous properties in  
Proposition~\ref{p.action} and then state and prove the ordered versions  in the language of group actions
in Corollary~\ref{c.action}.

\begin{proposition} \label{p.action}
~

\begin{enumerate}
\item Let $\vg \in \fG^n$, then  $\vg (G, \ell)  \in \{(G, \pi (\ell) ) 
\;|\; \pi \in \fS_n   \} $ for all $G\in \calG_n$ if and only if $\vg=\bf{1}$.
\item For all $G\in \mathcal{G}_n$ there is some  $\vg \in \fG^n$  such that  
$\vg (G, \ell)  \notin \{(G, \pi (\ell)) \;|\; \pi \in \fS_n \} $. \item For all $G\in \mathcal{G}_n$, there exists an $G' \in \mathcal{G}_n$ such  
that $G'$ is not a twisted dual of $G$, if and only if $n>1$

\end{enumerate}
\end{proposition}
\begin{proof} ~

\begin{enumerate}
\item
The sufficiency is easily verified by calculation.

To prove the necessity, for each $\vg \neq \bf{1}$ we find $ 
(G,\ell) \in \mathcal{G}_{or(n)}$ with the property that, if $\vg (G,\ell) = (H,\ell)$, then $G \neq H$ as embedded graphs, and hence $(H,\ell) \neq (G, \pi (\ell))$ for any ordering $\pi (\ell)$.

Given $\vg \neq \bf{1}$, so that $\vg=(\g_1, \g_2, \ldots , \g_n)$ has $\g_i \neq 1$ for some $i$, we consider $(B, \ell)$ and $ (B', \ell) \in \mathcal{G}_{or(n)}$, where $B$ is the connected plane bouquet, which consists of $n$ loops at a single vertex, and $(\tau, i)(B,\ell) = (B', \ell)$.

If $\g_i= \delta$ or $\tau \delta$, then $\vg (B, \ell)$ has at least one more  
 vertex than $(B, \ell)$ (since $(\g_i,i)(B,\ell)$ has two vertices), so $\g (B, \ell) \neq (B, \pi(\ell))$ for any permutation $\pi$.
If $\g_i=\tau$ then $\vg (B, \ell)$ is non-orientable (as $(\g_i,i)(B, 
\ell)$ is non-orientable), so again $\g (B, \ell) \neq (B, \pi(\ell))$ for any permutation $\pi$.
If $\g_i= \delta\tau$ or $\tau \delta\tau$, then $\vg (B', \ell)$  
has at least one more vertex than $(B', \ell)$ (since $(\g_i,i)(B',\ell)$ has two  
vertices). In all of these cases the $\vg$ changes  
the underlying embedded graph as required.

\item Let $(G,\ell) \in \calG_{or(n)}$, then $G$ either  
contains a cycle or does not contain a cycle. If $G$ contains  a  
cycle then there exists a set of edges $A$ of $G$ such that adding a  
half-twist to each of the edges in $A$ will change the orientability  
of $G$.  Let $\vg =(\g_1, \ldots, \g_n)$, where $\g_i=\tau$ if $e_i \in A$ and $0$ otherwise. It then follows that exactly one of  $(G,\ell)$ and $\vg (G,\ell)$ is orientable.

  On the other hand, if $G$ does not contain a cycle,  then taking  
the dual at any edge $e$ of $G$  will result in  a  
ribbon graph containing a cycle.  In this case, if we let $\vg= 
( (\delta , 1 ), (1,2), \ldots (1,n)  )$, we have that exactly one  
of $(G,\ell)$ and $\vg (G,\ell)$ contains a cycle.

  In either case there exists some $\vg\in \fG^n$ with the property  
that the ribbon graphs (without an edge order) in $(G,\ell)$ and $\vg  
(G, \ell)$ are distinct, and therefore $(G,\ell)\neq \vg (G, \pi(\ell))$,  
for any edge order $\pi(\ell)$.

\item Sufficiency is easily checked by calculation. To prove the  
necessity,  assume that $n>1$. We need to show that for any graph $G \in \mathcal{G}_n$ there is some graph $H \in \mathcal{G}_n$ which is not a twisted dual of $G$.

We will prove the result by showing that there exists a set $\mathcal{S}$ of ribbon  
graphs in $\mathcal{G}_n$ that is closed under taking the twisted  
duals and that has the additional property that every orientable ribbon graph in the set  
is plane. Thus, any graph $G \in \mathcal{G}_n - \mathcal{S}$ is not a twisted dual of any graph in $\mathcal{S}$ and vice versa.  However, for all $n>1$, there is an orientable      non-plane ribbon graph, so $\mathcal{G}_n - \mathcal{S}$ is not empty.  

Our desired set $\mathcal{S}$ has 
the property that $G \in \mathcal{S}$ if and only if   
$G$ is plane and has a distinguished vertex $v$ such that  every  
edge in $G$ is either a loop incident with $v$ or the edge is a  
bridge incident to $v$ and a $1$-valent vertex; or $G$ is  obtained by half-twisting some of the edges of  
such a ribbon graph. Notice that every orientable ribbon graph in $ 
\mathcal{S}$ is plane.

To show that $\mathcal{S}$ is closed under the operations of twisted  
duality it is enough to show that for each $G\in \mathcal{S}$, $e\in E 
(G)$ and $\g\in \fG$, we have $G^{\g(e)}\in \mathcal{S}$. To see that  
this is indeed the case, let $e$ be an edge of some $G\in \mathcal{S} 
$. Then $e$ is either a non-twisted loop, a twisted loop, or a bridge,  
and $\g =1,\tau, \delta, \tau\delta, \delta\tau$ or $\tau\delta\tau$.
   If $e$ is a non-twisted loop then the edge corresponding to $e$ in  
$G^{1(e)}$ and $G^{\tau\delta\tau (e)}$ is a non-twisted loop;  in $G^ 
{\delta(e)}$ and $G^{\tau\delta(e)}$ it is a bridge; and in $G^{\tau(e)} 
$ and $G^{\delta\tau (e)}$ it is a twisted loop.
   If $e$ is a twisted loop then the edge corresponding to $e$ in $G^ 
{\tau(e)}$ and $G^{\tau\delta (e)}$ is a non-twisted loop;  in $G^ 
{\delta\tau(e)}$ and $G^{\tau\delta\tau(e)}$ is a bridge; and in $G^{1 
(e)}$ and $G^{\delta (e)}$  is a twisted loop.
   If $e$ is a bridge then the edge corresponding to $e$ in $G^{\delta 
(e)}$ and $G^{\delta\tau (e)}$ is a non-twisted loop;  in $G^{\delta 
\tau(e)}$ and $G^{\tau\delta\tau(e)}$ is a bridge; and in $G^{1(e)}$  
and $G^{\delta (e)}$  is a twisted loop.
   In all of these cases the resulting twisted dual $G^{\g(e)}$ is  
in $\mathcal{S}$ and the result then follows.

\end{enumerate}

\end{proof}

\begin{corollary}\label{c.action}
The action of $ \fG^{n}$ on $\calG_{or(n)}$ is \begin{enumerate}
\item  faithful;
\item  has no fixed points;
\item transitive if and only if $n>1$;
\item not free.
\end{enumerate}
\end{corollary}

\begin{proof}
The first three properties follow easily from Proposition~\ref{p.action}.

To show that the action is not free we need to show that there is a $ 
(G,\ell)\in \calG_n$,  such that $\g (G,\ell)=(G,\ell)$ for some non-trivial $\vg\in \fG^n$. This exists since $\prod_{i=1}^n (\tau, i)(K_{1,n}, \ell)  
= (K_{1,n}, \ell)$, for any edge order $\ell$ of the complete  
bipartite ribbon graph $K_{1,n}$.
\end{proof}

\subsection{Some historical context}\label{ss.notable}

We take a moment now to place prior work on partial duals and Petrie duals (also known as Petrials) in the context of twisted duality.  Doing so will also highlight the actions of some important subgroups of the ribbon group. In particular, we will see that sets of geometrically dual graphs arise as the orbits of the action of an order two subgroup of $\fG^{e(G)}$, and that sets of partially dual graphs, which result from applying $\delta$ to a subset of edges, arise as orbits of the action of another natural subgroup of the ribbon group.   The half-twist operation $\tau$ gives an analogous construction for Petrials and what we will term \emph{partial Petrials}.  We will use Wilson's taxonomy from \cite{Wil79} for similar constructions for each of the subgroups of $\fG$. 

The group $\fG=\langle \delta, \tau \; |\; \delta^2, \tau^2 , (\delta\tau)^3 \rangle$ has five non-trivial subgroups, with the four proper, non-trivial, subgroups being cyclically generated. Each of these  subgroups defines an action on the sets   $\calG_{or(n)}$.
If $\g\in \fG$ and $\langle \g \rangle$ is the subgroup of $\fG$ generated by $\g$, then we denote the orbit of $(G,\ell)$ under the action of  $\langle\g \rangle^{e(G)} $  on $\calG_{or(n)}$ by
$Orb_{( \g )}(G,\ell):=\{ \vg(G, \ell) \; | \; \vg \in \langle\g \rangle^{e(G)} \, \leq \,  \fG^{e(G)}  \}$.  Again with slight abuse of terminology, we define
 \[Orb_{( \g )}(G):=\{H : (H, \ell) \in Orb_{( \g )}(G, \ell) \text{ for some edge order } \ell \}.\]

The following fact relating twisted duality and geometric duality will be important later.  Recall that the geometric dual $G^*$ of an cellularly embedded graph $G$ is constructed exactly as in the plane case by placing a vertex in each face, and connecting two of these vertices with an edge whenever their faces share an edge on their boundaries.  In the context of ribbon graphs, $G^*$ is constructed by  regarding the ribbon graph $G$ as a punctured surface, filling in the punctures using a set of discs denoted $ \V(G^*) $, then removing the original vertex set $\V(G)$ (so $G^* = (\V(G^*), \E(G)) $).

\begin{proposition}[Chmutov \cite{Ch1}]
If $G$ is an embedded graph, then
\[ 
G^* = G^{\delta(E(G))}. 
\] 
\end{proposition}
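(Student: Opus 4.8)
The plan is to work entirely with arrow presentations, since that is how both $\delta$ and the Euler-Poincar\'e dual are most directly compared, and to show that applying $\delta$ to every edge converts the vertex circles of $G$ into its boundary (face) circles, which are exactly the vertices of $G^*$. First I would note that by Proposition~\ref{switch} the single-edge operations $(\delta,i)$ commute across distinct edges, so $G^{\delta(E(G))}$ is independent of the edge order (this is also the content of Proposition~\ref{reduce}) and may be analysed as one simultaneous surgery on the arrow presentation of $G$.

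Next I would set up the standard correspondence between an arrow presentation of a ribbon graph and its boundary components. Tracing a boundary component amounts to travelling along the arcs of the vertex circles and, upon meeting a marking arrow labelled $e$, jumping to the partner arrow labelled $e$ and continuing, with the head/tail data of the two arrows dictating the direction of continuation. The resulting closed walks are precisely the boundary components of the surface $G$, equivalently the faces of the cellular embedding, equivalently the discs $\V(G^*)$ that are sewn into the punctures in the ribbon-graph construction of $G^*$ recalled just above. Since $G^* = (\V(G^*),\E(G))$ retains the same edge set, each edge $e$ still appears as a pair of marking arrows on these circles, now recording how the boundary walk crosses $e$.

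The crux is then to verify that the circles produced by $\delta(E(G))$ coincide with these boundary walks. The operation $\delta$ at $e$ deletes the two arrows $A,B$ labelled $e$ together with the arcs carrying them and reglues via new arcs running from the head of $A$ to the tail of $B$ and from the head of $B$ to the tail of $A$. Reading off this anti-parallel (``crossed'') reconnection, the arcs of the vertex circles adjacent to the edge-ends $A$ and $B$ get spliced together in exactly the pattern followed by the boundary walk of $G$ at $e$; one can check this on the local model of a single edge joining two arcs, where $\delta$ merges the two incident circles into the single boundary circle through that edge. Performing this simultaneously at every edge splices all the vertex arcs into new circles whose cyclic structure is, arc for arc, the boundary-tracing rule described above. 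Hence the new circles are the boundary components of $G$, that is, $\V(G^*)$, and since the labelled arrows on them are induced by the same edge set $\E(G)$, the arrow presentation of $G^{\delta(E(G))}$ is identical to that of $G^*$.

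I expect the main obstacle to be the bookkeeping in this last step: one must fix an orientation convention for the arcs of the vertex circles and confirm that the head-to-tail prescription of $\delta$ glues arcs with the correct relative orientation, so that the composite really is a disjoint union of circles matching the boundary walk (rather than, say, reversing a component or producing the wrong twist parity). A clean way to discharge this is to settle the single-edge local picture once, fixing orientations there, and then argue that because the reconnections at distinct edges are independent by Proposition~\ref{switch}, the global outcome is the disjoint union of the boundary cycles. This also makes transparent that the construction reproduces Chmutov's partial dual with respect to all of $E(G)$, for which $G^{E(G)} = G^*$ is the defining endpoint of the partial-duality interpolation between $G$ and $G^*$.
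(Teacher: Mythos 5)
The paper does not prove this proposition at all: it is quoted from Chmutov's paper \cite{Ch1} and used as a black box, so there is no internal proof to compare yours against. Your argument is a correct, self-contained verification, and it is essentially the ``right'' proof. The key identification you need --- that the two new arcs introduced by $\delta$ at an edge $e$ (head of $A$ to tail of $B$, and head of $B$ to tail of $A$) are exactly the two free sides of the edge ribbon for $e$ --- does check out: in the construction of a ribbon graph from an arrow presentation, the boundary of the edge disc, traversed with the orientation that aligns its two glued arcs with the marking arrows $A$ and $B$, consists of the arc glued to $A$, a free side from the head of $A$ to the tail of $B$, the arc glued to $B$, and a free side from the head of $B$ to the tail of $A$. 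Hence after performing $\delta$ at every edge, the surviving arcs of the vertex circles (the portions between marking arrows) together with these free sides trace out precisely the boundary components of $G$, which are the vertex discs of $G^*$, and the new arrows sit where the edge discs meet those face discs, with directions consistent up to the allowed reversal of arrow pairs. Your concern about orientation bookkeeping is legitimate but is discharged exactly as you propose, by fixing the single-edge local model and invoking commutativity of the operations at distinct edges; nothing further is needed.
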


Thus, we see that a graph and its geometric dual form the orbit of $G$ under the action of the subgroup of $\fG^{e(G)}$ of order two generated by $(\delta, \ldots, \delta)$.

\begin{definition}
Let $  \langle \delta \rangle = \langle \delta \; |\; \delta^2\rangle$  be the subgroup of $\fG$ generated by $\delta$. Then two twisted duals $H$ and $G$  are said to be {\em partial duals} if $(H,\ell)= \vg(G,\ell)$ for some $\vg\in  \langle \delta \rangle^{e(G)}$ and some edge ordering $\ell$, \emph{i.e.} if $H \in Orb_{( \delta )}(G)$.
\end{definition}

Chmutov further observed in \cite{Ch1} that partial duality can be regarded as an action of  $\mathbb{Z}_2^{e(G)}$ on a ribbon graph $G$, and here we see that  this $\mathbb{Z}_2^{e(G)}$ action is the action of a subgroup of the ribbon group $\fG^{e(G)}$.

 \medskip

A very similar situation occurs with Petrials.  The Petrial of an embedded graph $G$, which we denote by $G^{\times}$, is formed with the same edges and vertices as $G$, but for the faces taking the Petrie polygons, which are the result of closed left-right walks in $G$ (see Wilson \cite{Wil79}).  Since the left-right pattern effectively means crossing over each edge, when the graph is viewed as a ribbon graph, this is simply giving each edge a half-twist, and hence $G^{\times}$ is simply the result of giving a half-twist to all of the edges.  Thus, we have the following proposition, and see that, in analogy with geometric duality, a graph and its Petrial also form  the orbit of $G$ under the action of the subgroup of $\fG^{e(G)}$ of order two generated by $(\tau, \ldots, \tau)$. 

\begin{proposition}
If $G$ is an embedded graph, then
\[ 
G^{\times} = G^{\tau(E(G))}. 
\] 
\end{proposition}

This parallel suggests naming the orbit under half-twists the \emph{partial Petrials}, in analogy with the partial duals given above.

\begin{definition}
Let $  \langle \tau \rangle = \langle \tau \; |\; \tau^2\rangle$  be the subgroup of $\fG$ generated by $\tau$. Then two twisted duals $H$ and $G$  are said to be {\em partial Petrials} if $(H,\ell)= \vg(G,\ell)$ for some $\vg\in  \langle \tau \rangle^{e(G)}$ and some edge ordering $\ell$, \emph{i.e.} if $H \in Orb_{( \tau )}(G)$.
\end{definition}

To emphasize the topology, we will sometimes say that two partial Petrials are \emph{twists} of one another.

We will now see that the orbit of $G$ under the action of the subgroup $\langle \tau \rangle^{e(G)}$ completely describes equivalence as local embeddings. 

\begin{proposition}\label{part comb}
Let $G$ be an embedded graph and $M$ be any one of its underlying combinatorial maps.  Then 
\begin{multline*}Orb_{( \tau )}(G) = \{  G^{\tau(A)}\; |\; A\subseteq E(G) \} 
= \{ \text{local embeddings of } M\} \\
=\{ H \;| \;  H \text{ and } G \text{ are equivalent as locally embedded maps} \}
. \end{multline*}
In particular, two graphs $G$ and $H$ are partial Petrials if and only if they are equivalent as locally embedded maps.
\end{proposition}
\begin{proof}
The final equality was noted in Section \ref{Sec Embedded graphs}.  
Furthermore, $H$ and $G$ are equivalent as locally embedded maps if and only if there exist local orientations of $H$ and $G$ that give rise to the same combinatorial map if and only if the signed rotation systems of $H$ and $G$ with respect to these local orientations differ by signs if and only if $G$ and $H$ are partial Petrials.

\end{proof}

In \cite{Wil79}, Wilson found that the operations of taking the geometric dual and the Petrial of a graph generated an action of $S_3$ on the graph.  This action is exactly the action of the copy of $S_3$ appearing as the subgroup of $\fG^{e(G)}$ generated by $( \delta, \ldots, \delta)$ and $( \tau, \ldots, \tau)$.  In addition to the geometric dual and the Petrial already described, Wilson named these global operations, as given in Table \ref{tax}.  We suggest the name Wilsonial in parallel with Petrial for the third operation that is a `duality' property, instead of Wilson's original term `opposite'.  Also, in order to emphasize the operations and their geometry, we use the term twisted dual here  rather than Wilson's original term of direct derivative.   

Just as the geometric dual and Petrial may be associated with partial duality and partial Petrials by applying $\delta$ or $\tau$ to subsets of the edges, natural classes of graphs arise as the orbits $Orb_{( \g )}(G)$ for $\g$ some other element of $\fG$. This correspondence is given in Table \ref{tax}.

\begin{table}
\begin{tabular}{|l|l|l|l|}
\hline
Generator(s) & Order of subgroup  & Applied to all edges & Applied to a subset of edges  \\
&  of $\fG$ generated   &&  \\
\hline \hline 
$\delta$ & 2&geometric dual   & partial dual \\
\hline 
$\tau$&  2 & Petrie dual  or Petrial   & partial Petrial 
\\
\hline
$\tau\delta\tau$& 2 &Wilson dual or Wilsonial & partial Wilsonial \\ & &(or the opposite)  & \\ \hline 
$\delta\tau$& 3 &triality & partial triality	
\\
\hline 
$\delta$ and $\tau$  & 6   &twisted dual (or a direct derivative)  & twisted dual \\

\hline
\end{tabular}
\bigskip ~
\caption{Taxonomy of classes of twisted duals.} \label{tax} \end{table}

\section{ Medial graphs and the ribbon group action}\label{s.medialandribbongroup}

Via the ribbon group action, twisted duality gives the full story of the interplay among a graph, its medial graph, and its various twisted duals.  This allows us to answer the question we originally posed: 
\begin{itemize}\item given any $4$-regular graph $F$, what precisely is the set of embedded graphs that have medial graphs isomorphic to $F$ as abstract graphs? \end{itemize}     We are able to provide a classification of all the twisted duals of an embedded graph $G$ via its medial graph, and thus characterize $Orb(G)$ as the answer to this question. These relations among twisted duals, embedded medial  graphs, and cycle family graphs are higher genus generalizations of the classic results relating geometric duals, medial graphs, and Tait graphs.

By way of motivation, we begin  by reviewing some basic properties of plane medial graphs. 
If $F$ is a connected $4$-regular plane graph, then $F$ is face two-colourable (see  Fleischner~\cite{Fle90a} for example). We call this a \emph{checkerboard colouring}, and use the colours black and white. The \emph{blackface graph}, $F_{bl}$, of $F$ is the plane graph constructed  by  placing one vertex in each black face and adding an edge between two of these vertices whenever the corresponding regions meet at a vertex of $F$.  The \emph{whiteface graph}, $F_{wh}$, is constructed analogously by placing vertices in the white faces.  Borrowing terminology from knot theory, we refer to $F_{bl}$ and $F_{wh}$ as the \emph{Tait graphs} of $F$.

There are two key properties of Tait graphs.
The first property is duality: $(F_{bl})^* = F_{wh}$ and $(F_{wh})^* = F_{bl}$ , where the asterisk indicates geometric duality.  Thus, if $G$ is any plane graph, and we give $G_m$ the \emph{canonical checkerboard colouring}, {\em i.e.} where the black faces contain the vertices of $G$, then 
\begin{equation}\label{taitemedial}
(G_m)_{bl}=G, \text{ and } (G_m)_{wh}=G^*.  
\end{equation}
Secondly, the medial graph of a Tait graph is just the original graph, {\em i.e.}
\begin{equation}\label{medialtaite}
(F_{wh})_m=(F_{bl})_m=F.  
\end{equation}
Moreover, $\{F_{wh}, F_{bl}\}$ is exactly the set of plane graphs whose medial graph is $F$.

With this, we can think of the Tait graphs loosely as ``orbits'' of size two under the operation of geometric duality, and everything in this ``orbit'' shares the same medial graph.  In Section~\ref{medial and cycle} we will see how this point of view can be fully realized by embedded graphs.

In the special case that a $4$-regular embedded graph $F$ (thought of as being cellularly embedded in a surface) is checkerboard colourable, then we can construct the Tait graphs just as in the plane case, and the same properties described above will still hold. In particular, there are the following two well-known results.

\begin{proposition}\label{cyclefamilydual}
If $G$ is any embedded graph, thought of as being cellularly embedded in a surface, and we canonically checkerboard colour the embedded medial graph $G_m$, then $(G_m)_{bl} = G$ and $(G_m)_{wh} = G^* = G^{\delta(E(G))}$, the geometric dual of $G$ in the surface. 
\end{proposition}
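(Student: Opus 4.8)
The plan is to read off $G$ and $G^{*}$ directly from the cellular embedding of $G_m$ in the surface $\Sigma$ in which $G$ lives, using the face structure of the medial graph. First I would establish the basic dichotomy for the faces of $G_m$: since each medial vertex sits at the midpoint of an edge of $G$ and the medial edges run alongside the face boundaries of $G$, every face of $G_m$ contains either exactly one vertex of $G$ or exactly one face of $G$. Around any medial vertex these two types necessarily alternate, because the four corners at the midpoint of an edge $e$ meet, in cyclic order, the two endpoints of $e$ and the two faces of $G$ incident to $e$. Hence the assignment ``black $=$ contains a vertex of $G$, white $=$ contains a face of $G$'' is a proper $2$-colouring; this is precisely the canonical checkerboard colouring, and in particular it confirms that the embedded medial graph is checkerboard colourable on any surface.

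Next I would recover $(G_m)_{bl} = G$ by a purely local argument. By the dichotomy the black faces are in bijection with $V(G)$, so the vertices of $(G_m)_{bl}$ are the vertices of $G$. Two black faces meet at a medial vertex exactly when the corresponding vertices of $G$ are the two endpoints of the edge $e$ carrying that medial vertex; thus the edges of $(G_m)_{bl}$ are in bijection with $E(G)$, each joining the correct pair of vertices. Since $(G_m)_{bl}$ is drawn in the same surface $\Sigma$ with its vertices placed inside the vertex-faces, the cyclic order in which the black-face edges leave a vertex $v$ reproduces the cyclic order of the edges of $G$ at $v$ (and, in the non-orientable case, the same local twisting), so the identification is one of embedded, not merely abstract, graphs.

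The statement $(G_m)_{wh} = G^{*}$ then follows by the identical argument with the roles of vertices and faces of $G$ interchanged: the white faces are in bijection with $F(G)$, two of them meet at the medial vertex on an edge $e$ exactly when the two faces of $G$ bordering $e$ are adjacent across $e$, so $(G_m)_{wh}$ is by definition the Euler-Poincar\'{e} dual $G^{*}$ embedded in $\Sigma$. The remaining equality $G^{*} = G^{\delta(E(G))}$ is immediate from Chmutov's proposition \cite{Ch1} quoted above.

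I expect the main obstacle to be not the abstract adjacency, which is a routine face count, but verifying that the reconstruction respects the embedding rather than just the underlying abstract graph, i.e. that the rotation system and edge twists of $(G_m)_{bl}$ agree with those of $G$. In the plane case this is the classical fact recorded in \eqref{taitemedial}; the work here is to check that the local correspondence at each medial vertex is compatible with the surface data on an arbitrary, possibly non-orientable, surface. Once that local compatibility is in hand, the global identifications $(G_m)_{bl} = G$ and $(G_m)_{wh} = G^{*}$ follow.
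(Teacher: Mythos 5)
The paper does not actually prove this proposition: it is stated (together with Proposition~\ref{blacktomedial}) as a ``well-known result'' generalizing the plane-case identities in Equation~\eqref{taitemedial}, and is then used as an ingredient in the proofs of Theorem~\ref{t.radmed} and Proposition~\ref{p.cyrad} (where the paper shows that $F_{bl}$ and $F_{wh}$ arise from flat duality states). Your direct verification is the standard argument and is essentially correct: the face dichotomy for $G_m$, the alternation of vertex-faces and face-faces around each medial vertex, and the resulting bijections $V((G_m)_{bl})\leftrightarrow V(G)$ and $E((G_m)_{bl})\leftrightarrow E(G)$ are all right, and the reduction of $G^*=G^{\delta(E(G))}$ to Chmutov's proposition is exactly what the paper intends. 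The one step you flag but do not carry out --- that the identification respects the rotation system and edge twists, so that $(G_m)_{bl}=G$ as embedded graphs rather than merely as abstract graphs --- is genuinely the only content of the proposition beyond the plane case, and it closes most cleanly in the ribbon-graph picture the paper sets up in Figure~\ref{fig.meddef}: since $G_m$ is drawn inside the ribbon graph $G$, the black regions of the canonical colouring are precisely neighbourhoods of the vertex discs of $G$ joined through the edge discs, so the black-face graph is cut out of the same surface with the same local data as $G$ itself, and no separate check of rotations or twists is needed. With that observation supplied, your proof is complete and is the argument the paper implicitly relies on.
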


\begin{proposition}\label{blacktomedial}
Let $F$ be a $4$-regular, cellularly embedded graph. Then: 
\begin{enumerate}
\item if $F$ is checkerboard colourable, then $\{F_{bl}, F_{wh} \}$ is the complete set of cellularly embedded graphs with embedded medial graph equivalent to $F$; 
\item if $F$ is not checkerboard colourable, then $F$ is not the embedded medial graph of any embedded graph.
\end{enumerate}

\end{proposition}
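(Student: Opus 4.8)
The plan is to prove both parts by working with the local structure of a $4$-regular cellularly embedded graph $F$ and understanding how its embedded medial graph encodes the splitting at each $4$-valent vertex. The key observation is that at each vertex of $F$ there are exactly two ways to pair up the four incident edge-ends into two ``transitions'' that are consistent with the embedding (i.e.\ the cyclic order around the vertex), and a checkerboard colouring is precisely a globally consistent choice of one of these two pairings at every vertex. Each such consistent splitting produces an embedded graph whose embedded medial graph is $F$.

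For part (i), I would first recall from Proposition~\ref{cyclefamilydual} that $(G_m)_{bl}=G$ and $(G_m)_{wh}=G^*$ for any embedded $G$; this gives the easy inclusion that both $F_{bl}$ and $F_{wh}$ are embedded graphs whose embedded medial graph is $F$ (using the mutual-medial property analogous to (\ref{medialtaite}), which I would verify holds verbatim in the surface case since the medial construction is entirely local). For the reverse inclusion, suppose $H$ is any cellularly embedded graph with $H_m$ equivalent to $F$ as an embedded graph. Then the isomorphism $H_m \cong F$ as embedded graphs identifies the $4$-valent vertices of $H_m$ with those of $F$, and the vertices of $H$ correspond to a choice of face-set in the checkerboard colouring of $F$. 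Since $F$ is connected and checkerboard colourable, there are exactly two checkerboard colourings (swapping black and white), so there are exactly two embedded graphs realizable this way, namely $F_{bl}$ and $F_{wh}$. The main point to nail down is that the embedded medial construction is \emph{invertible} given the checkerboard data: recovering $H$ from $F$ together with a choice of colour amounts to collapsing the black (resp.\ white) faces to vertices and reading off the edges from the vertices of $F$, and this recovers exactly one of the two Tait graphs.

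For part (ii), I would argue the contrapositive: if $F$ \emph{is} the embedded medial graph of some embedded graph $G$, then $F=G_m$ admits the canonical checkerboard colouring by Proposition~\ref{cyclefamilydual} (the black faces being the neighbourhoods of the vertices of $G$ and the white faces the neighbourhoods of the faces of $G$, which alternate around each $4$-valent vertex by construction), hence $F$ is checkerboard colourable. This immediately yields the statement that a non-checkerboard-colourable $4$-regular embedded graph cannot be any embedded medial graph.

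The step I expect to be the main obstacle is the reverse inclusion in part (i): verifying rigorously that \emph{every} embedded graph $H$ with $H_m=F$ must be one of the two Tait graphs, rather than some other graph producing the same embedded medial graph via a different local splitting. The care here is that the equivalence $H_m \cong F$ must be as embedded graphs (respecting the surface structure), and I would need to check that this forces the vertex-faces of $F$ coming from $H$ to be a genuine part of a checkerboard $2$-colouring of all faces of $F$ — i.e.\ that the ``black'' faces at every $4$-valent vertex are pairwise non-adjacent across that vertex. This follows from the local picture of the medial construction (two opposite corners at each crossing belong to vertex-discs of $H$ and the other two to face-discs), but articulating it so that connectedness of $F$ then pins down the colouring up to the single black/white swap is the crux of the argument.
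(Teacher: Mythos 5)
The paper does not actually prove Proposition~\ref{blacktomedial}: it is introduced with the phrase ``there are the following two well-known results'' and stated without proof, so there is no argument of the authors' to compare yours against. Judged on its own, your sketch is sound and identifies the right crux. Part (ii) via the contrapositive (the canonical checkerboard colouring of $G_m$, in which vertex-discs and face-discs of $G$ alternate around each $4$-valent vertex) is exactly the standard argument. For part (i), the forward inclusion is the surface analogue of Equation~\eqref{medialtaite}, which the paper asserts carries over, and your reverse inclusion correctly reduces to two facts: an embedded-graph isomorphism $H_m \cong F$ transports the canonical colouring of $H_m$ to a checkerboard colouring of $F$ whose black class is the set of vertex-faces of $H$, and $H$ is then recoverable from $F$ together with that colour class. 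One small caveat you should make explicit: the step ``there are exactly two checkerboard colourings'' requires $F$ to be connected (as in the paper's planar discussion, where connectedness is hypothesised); for a $4$-regular embedded graph with $k$ components there are $2^k$ checkerboard colourings and correspondingly more graphs with embedded medial graph $F$, so the statement as literally written implicitly assumes connectedness. With that hypothesis added, your argument goes through.
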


Propositions \ref{cyclefamilydual} and \ref{blacktomedial}  provide a complete characterization of the embedded graphs whose medial graph is equivalent to a given $4$-regular, embedded graph $F$. Moreover, the propositions tell us that all of the embedded graphs with this property are geometric duals.

A natural question then arises: given any $4$-regular graph $F$, which embedded graphs have medial graphs isomorphic to $F$ as abstract graphs? In this section we answer this question, giving a complete characterization of the set of embedded graphs with this property.
To do this,  we introduce the concept of the cycle family graphs of  a $4$-regular,  embedded graph $F$. The cycle family graphs do not rely on the checkerboard colourability of an embedded graph, and every $4$-regular, embedded graph will admit a set of cycle family graphs. We will prove that: 
\begin{enumerate}
\item $G_m \cong H_m $ as abstract graphs if and only if $G$ and $H$ are twisted duals (compare Equation~\eqref{taitemedial} and Proposition~\ref{cyclefamilydual} to Theorem~\ref{t.radmed}); 
\item $G_m \cong F$ as abstract graphs for a given $4$-regular, embedded graph $F$ if and only if $G$ is a cycle family graph of $F$ (compare Equation~\eqref{medialtaite} and Proposition~\ref{blacktomedial} to Theorem~\ref{iso to F}).

\end{enumerate}
Thus the relationships between cycle family graphs and twisted duals fully extend the classic relations between Tait graphs and duality.

\subsection{Cycle family graphs}
In this subsection we introduce the concept of a cycle family graph. Cycle family graphs can be viewed  as an extension of Tait graphs to arbitrary ({\em i.e.} not necessarily checkerboard colourable) $4$-regular graphs.  

In order to motivate   cycle family graphs, we begin by recalling the construction of the two Tait graphs.  Suppose that $F$ is a checkerboard coloured, $4$-regular embedded graph. Then an arrow presentation of the whiteface graph, $F_{wh}$, is obtained by replacing every checkerboard coloured vertex $v$,  \raisebox{-4mm}{\includegraphics[height=10mm]{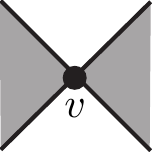}}, with the configuration   \raisebox{-4mm}{ \includegraphics[height=10mm]{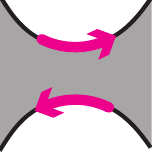}}.
   Similarly,  the blackface graph $F_{bl}$ is obtained by replacing every checkerboard coloured vertex $v$,  \raisebox{-4mm}{\includegraphics[height=10mm]{m1v}}, with the configuration  \raisebox{-4mm}{\includegraphics[height=10mm]{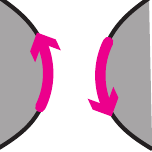}}. (The arrows in the configurations shown are labelled $v$.)
   The resulting embedded graphs $F_{wh}$ and $F_{bl}$ are the two Tait graphs of $F$.
   
  Notice that in the construction of a Tait graph, the following three restrictions are used:
  \begin{enumerate}
  \item \label{mot1}two types of splits  $\left( \raisebox{-4mm}{\includegraphics[width=1cm]{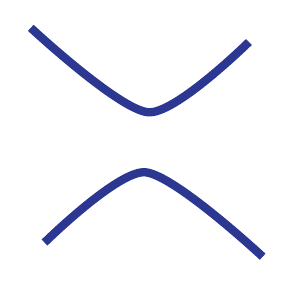}} \text{ and } \raisebox{-4mm}{\includegraphics[width=1cm]{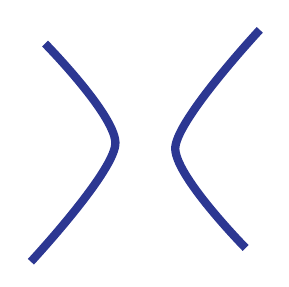}}\right)$ are used in the construction;
  \item \label{mot2} the same type of split, with respect the the checkerboard colouring, is chosen at each vertex of $F$;
  \item \label{mot3}  the arrows are consistent with a local orientation at the vertex. 
  \end{enumerate}

Cycle family graphs arise by relaxing each of the restrictions (\ref{mot1}), (\ref{mot2}) and (\ref{mot3}) that appear in the construction of the Tait graph. For relaxing restriction (\ref{mot1}), we note that there are three natural configurations associated with a $4$-valent vertex    ( \raisebox{-4mm}{\includegraphics[width=1cm]{v1nb}},  \raisebox{-4mm}{\includegraphics[width=1cm]{v2nb}} and  \raisebox{-4mm}{\includegraphics[width=1cm]{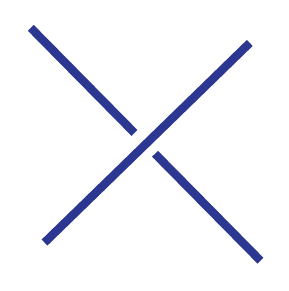}}), rather than two.  We allow all three types of configurations in the construction of a cycle family graph. For relaxing restriction (\ref{mot2}), we allow any of the three types of configuration mentioned above at each vertex. Notice that this means that we no longer require the $4$-regular embedded graphs $F$ to be checkerboard colourable. Finally, we relax restriction (\ref{mot3}) by also allowing the arrows to disagree with a local orientation at the crossing.  Thus cycle family graphs extend the idea of a Tait graph to all embedded graphs.
We now give the formal definition of a cycle family graph.

\bigskip

Let $F$ be a $4$-regular embedded graph thought of as a 2-cell embedding.   A {\em vertex state}  of $v\in \V(F)$ is a choice of one of the following configurations in a neighbourhood of the vertex $v$: 
\[ \includegraphics[width=2cm]{v1nb}\quad \raisebox{10mm}{,} \quad \quad  \includegraphics[width=2cm]{v2nb}\quad \quad\raisebox{10mm}{ or } \quad \quad \includegraphics[width=2cm]{v3nb}\raisebox{10mm}{.}\]
The configurations replace a small neighbourhood of the vertex $v$.
We will refer to the first two of the these vertex states as {\em splits} and the third as a {\em crossing}. Vertex states are sometimes called transitions or transition systems, but here again we choose terminology that is closer to that of knot theory.

An {\em arrow marked vertex state} of $v$ consists of a vertex state equipped with exactly two $v$-labelled arrows. Each arrow is placed on one of the positions indicated below and may point in either direction. 
\[
\includegraphics[height=15mm]{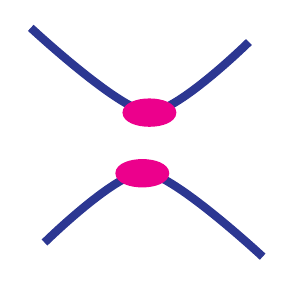}\raisebox{7mm}{,} 
\quad \quad\includegraphics[height=15mm]{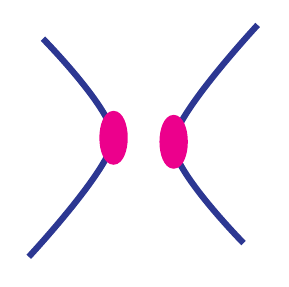}\raisebox{7mm}{,} 
\quad \quad\raisebox{7mm}{or}\quad\quad \includegraphics[height=15mm]{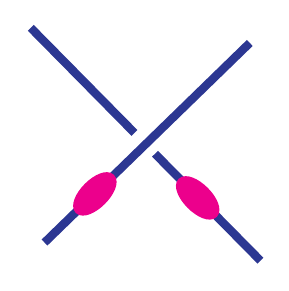}\raisebox{7mm}{.}
\]


A {\em graph state} $s$ of $F$ is a choice of vertex state at each vertex of $F$, and an {\em arrow marked graph state} $\vec{s}$  of $F$ is a choice of arrow marked vertex state at each vertex of $F$.  Note that each graph state corresponds to a specific family of edge-disjoint cycles in $F$, and this family is independent of embedding (although different embeddings of $F$ will generally use different vertex states to generate the same family of disjoint cycles).

\begin{definition}
Let $F$ be a $4$-regular embedded graph, and let $\vec{s}$ be an arrow marked graph state of $F$. Regard 
$\vec{s}$ as an arrow presentation of an embedded graph by viewing each component of $\vec{s}$ as a circle marked by the labelled arrows arising from the arrow marked vertex states.  Denote this new graph by $F_{\vec{s}}$, and, because the vertices of $F_{\vec{s}}$ arise from a family of disjoint cycles of $F$, we call $F_{\vec{s}}$ a {\em cycle family graph} of $F$.  Also note that there is a natural identification between the vertex set of $F$  the edge set of $F_{\vec{s}}$.  We denote the set of all cycle family graphs of a $4$-regular embedded graph $F$ by $\mathcal{C}(F)$.
\end{definition}

\begin{example}\label{e.cfg2}
As an example, on the projective plane, if $F=$ \raisebox{-10mm}{\includegraphics[height=20mm]{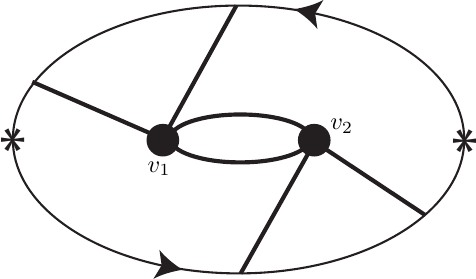}}, then one of the  arrow marked graph states $\vec{s}$ of $F$ gives the following cycle family graph:
 \[ \raisebox{10mm}{$F_{\vec{s}}=$} \;\; \includegraphics[height=20mm]{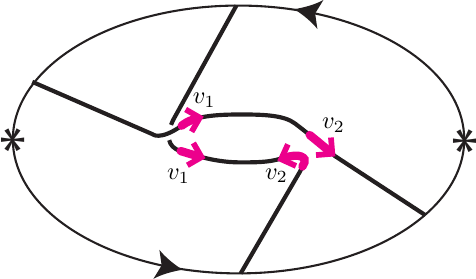}\;\;
  \raisebox{10mm}{$=$} \;\; \includegraphics[height=20mm]{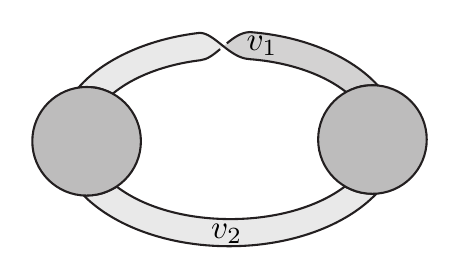}\;\;
  \raisebox{10mm}{$=$} \;\; \includegraphics[height=20mm]{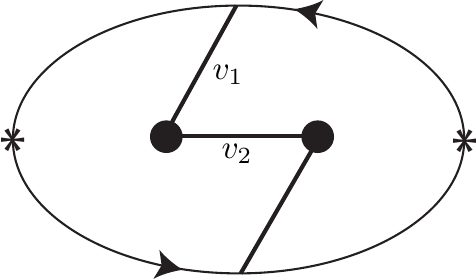}\;\raisebox{10mm}{.}
   \]
\end{example}
Although in Example~\ref{e.cfg2}, $F$ and $F_{\vec{s}}$ are both embedded in the same surface, in general this will not happen, as in Example~\ref{e.cfg}. 
\begin{example}\label{e.cfg}
As a second example of the construction of cycle family graphs, the reader can verify that the complete set of cycle family graphs of 
\;\raisebox{-7mm}{\includegraphics[height=15mm]{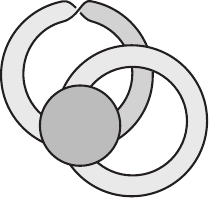}} \;\; contains only
\raisebox{-7mm}{\includegraphics[height=15mm]{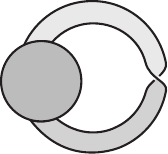}} \;, \;\;\;\raisebox{-7mm}{\includegraphics[height=15mm]{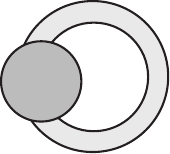}}\; and \;\; \raisebox{-7mm}{\includegraphics[height=15mm]{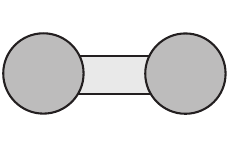}}\;.

\end{example}

We will say that two arrow marked vertex states are {\em equivalent} if 
we can obtain one from the other by reversing the direction of both arrows in the pair.
Figure~\ref{Fig:dualityjoins} illustrates this in the case of splits.  We refer to the arrow marked vertex states on the left of Figure~\ref{Fig:dualityjoins}, with the arrows pointing in opposite directions, as \emph{flat} arrowed marked vertex states, and the arrow marked vertex states on the right of Figure~\ref{Fig:dualityjoins}, with the arrows pointing in the same direction, as \emph{twisted} arrow marked vertex states. Note that these types of states are defined with respect to the embedding of the arrow marked vertex states. 
Furthermore, we say that two arrow marked graph states $\vec{s}$ and $\vs$ of $F$ are {\em equivalent} if, for each vertex of $F$, the arrow marked vertex states of $\vec{s}$ and $\vs$ at that vertex are equivalent when thought of in terms of arrow presentations.
\begin{figure}

\[    \raisebox{-4mm}{\includegraphics[height=10mm]{a1}} \;\; \sim \;\; 
 \raisebox{-4mm}{\includegraphics[height=10mm]{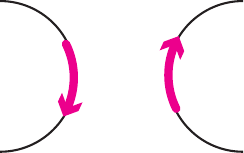}}\quad  \quad \quad \quad\raisebox{0mm}{and}
 \quad  \quad \quad \quad
  \raisebox{-4mm}{\includegraphics[height=10mm]{a2}} \;\; \sim \;\; 
 \raisebox{-4mm}{\includegraphics[height=10mm]{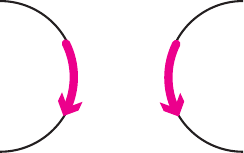}} \;\;  
 \]
\caption{Equivalent arrowed vertex states, with flat arrows on the left, and twisted arrows on the right.}
\label{Fig:dualityjoins}
\end{figure}

\begin{lemma} 
If $F$ is a $4$-regular embedded graph and  $\vec{s}$ and $\vs$ are two equivalent arrow marked graph states of $F$, then $F_{\vec{s}}=F_{\vs}$ as embedded graphs.  
\end{lemma}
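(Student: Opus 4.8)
The plan is to unwind the definitions and reduce the claim to the criterion for equivalence of arrow presentations recalled earlier, namely that reversing the directions of all marking arrows which share a common label does not change the underlying embedded graph. The key observation is that passing from $\vec{s}$ to an equivalent $\vs$ changes \emph{only} the directions of arrows, and does so label-pair by label-pair, which is exactly a label-reversal in the sense of arrow presentations.

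First I would observe that $\vec{s}$ and $\vs$, being equivalent as arrow marked graph states, have the \emph{same} underlying (unarrowed) graph state: equivalence of arrow marked vertex states only reverses arrow directions and never alters the split/crossing configuration chosen at a vertex. Consequently both arrow marked graph states single out the same family of disjoint cycles of $F$, so the circles appearing in the arrow presentations of $F_{\vec{s}}$ and $F_{\vs}$ are literally the same, with the arrows occupying the same positions and hence inducing the same cyclic order around each circle. The two arrow presentations therefore differ only in the directions of some of the marking arrows.

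Next I would identify the labels. In the arrow presentation defining $F_{\vec{s}}$ the arrows are coloured by the vertices of $F$ (equivalently, by the edges of $F_{\vec{s}}$), and the label $v$ occurs on precisely the two $v$-coloured arrows coming from the arrow marked vertex state at $v$. By the definition of equivalent arrow marked vertex states, at each vertex $v$ we either reverse \emph{both} $v$-arrows or neither. Letting $S \subseteq \V(F)$ be the set of vertices at which the directions were reversed, it follows that the arrow presentation of $F_{\vs}$ is obtained from that of $F_{\vec{s}}$ by reversing the direction of all marking arrows whose label lies in $S$. By the criterion for equivalence of arrow presentations this is precisely an equivalence of arrow presentations, and since equivalent arrow presentations represent the same embedded graph, we conclude $F_{\vec{s}} = F_{\vs}$.

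The only point requiring care, and the only place an objection could arise, is the matching between the local notion of equivalence (reverse the pair of arrows at a single vertex) and the global notion (reverse all arrows carrying a fixed label). This matching is immediate here because each label appears on exactly two arrows and both of these arise from a single vertex of $F$; reversing a lone arrow, by contrast, would be a half-twist $\tau$ and would genuinely change the embedded graph, which is exactly why the definition of equivalence insists on reversing the arrows in pairs.
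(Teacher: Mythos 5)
Your proof is correct and follows essentially the same route as the paper's: reduce the claim to the equivalence of the resulting arrow presentations, and observe that passing between equivalent arrow marked graph states reverses arrows only in label-pairs (both $v$-coloured arrows at once) without altering the underlying cycles, which is exactly the defining move for equivalence of arrow presentations. The paper leaves this verification as "easily checked"; you have simply written it out, including the worthwhile remark that reversing a single arrow would instead be a half-twist $\tau$.
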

\begin{proof}
We need to show that the arrow presentations, $F_{\vec{s}}$ and $F_{\vs}$ are equivalent, that is, $F_{\vs}$
can be obtained from $F_{\vec{s}}$ by reversing the direction of some of the pairs of arrows with the same labels and by homeomorphism of the cycles. To do this, it is enough to show that the arrow marked graph state $\vs$ can be obtained from $\vec{s}$ by reversing the direction of some of the pairs of arrows with the same labels and by homeomorphism of the cycles. The fact that this is indeed the case is easily verified by checking the defining relations of equivalent arrow marked graph and vertex states.
\end{proof}

We note that the converse of the above lemma is false as non-equivalent arrow marked graph states may give rise to equivalent cycle family graphs.

\begin{proposition} 
There are at most $6^{v(F)}$ distinct cycle family graphs of a $4$-regular embedded graph $F$.
\end{proposition}

At certain points in this paper we will  be particularly interested \emph{duality states}. These are  arrow marked graph states which arise by restricting all of the vertex states to splits with flat arrows. In particular, Tait graphs arise from special duality states, as follows.

\begin{proposition}\label{p.cyrad}
Let $F$ be a checkerboard coloured embedded $4$-regular graph.  Then there exist arrow marked graph states $\vec{b}$ and $\vec{w}$ of $F$ such that $F_{\vec{b}}=F_{bl}$ and $F_{\vec{w}}=F_{wh}$. Moreover, both $\vec{b}$ and $\vec{w}$  are duality states.
\end{proposition}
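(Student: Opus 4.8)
The plan is to construct $\vec{b}$ and $\vec{w}$ explicitly from the checkerboard colouring, choosing at every vertex the split dictated by the black (respectively white) faces with flat arrows, and then to check that the arrow presentation one reads off is exactly that of the corresponding Tait graph. Since interchanging the two colours interchanges the two assertions, it suffices to treat $\vec{b}$ and prove that $F_{\vec{b}}=F_{bl}$ as embedded graphs.

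First I would fix the local data at a vertex $v\in \V(F)$. Because $F$ is checkerboard coloured, the four face-corners meeting at $v$ alternate in colour, so two opposite corners are black and two are white. I define the vertex state of $\vec{b}$ at $v$ to be the split whose two arcs separate the two black corners -- equivalently, the split each of whose arcs runs along the boundary of a black face -- and I place the two $v$-coloured arrows in the flat configuration. Performing this at every vertex produces an arrow marked graph state $\vec{b}$, and since every vertex state is a split with flat arrows, $\vec{b}$ is a duality state by definition. The state $\vec{w}$ is defined identically with the colours reversed.

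Next I would identify the components of $\vec{b}$ and then match edges and embedding. With the black split chosen at each vertex, the associated family of disjoint cycles is precisely the set of boundary walks of the black faces of $F$, since at every vertex the split follows the two black corners. Hence the circles of the arrow presentation $\vec{b}$, which become the vertices of $F_{\vec{b}}$, are in bijection with the black faces of $F$, that is, with the vertices of $F_{bl}$. For edges, each vertex $v$ of $F$ supplies a single label carried by exactly the two arrows on the two arcs of the split at $v$; these arcs lie on the boundaries of the two black faces incident to $v$ (possibly the same face, giving a loop), so the edge of $F_{\vec{b}}$ labelled $v$ joins exactly the two black faces meeting at $v$, which is precisely the edge that the blackface construction assigns to $v$. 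It then remains to verify that the full arrow presentation of $F_{\vec{b}}$ -- the cyclic order of the arrows around each circle together with the flat data -- agrees with the arrow presentation read off from the natural embedding of $F_{bl}$: traversing a black face boundary visits its incident vertices in a definite cyclic order, and this order should be exactly the rotation of $F_{bl}$ at the corresponding vertex, while the flat arrows should record that each edge is non-twisted, matching the Euler-Poincar\'e embedding of the Tait graph.

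The main obstacle is this last step, namely showing that the construction reproduces $F_{bl}$ as an embedded graph and not merely as an abstract graph. Getting the rotation system and the orientation data of the arrows to agree requires a careful local analysis at each vertex and along each black face boundary, keeping track of how the flat arrows sit relative to the black corners; once the black split has been correctly singled out by the colouring, the bijections of vertices and of edges that precede it are essentially bookkeeping.
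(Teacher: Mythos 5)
Your construction is exactly the one the paper uses: at each vertex take the flat split whose arcs follow the black (resp.\ white) face boundaries, observe that the resulting cycles are the black face boundaries and that the flat arrows attach an edge wherever two black regions meet at a vertex, so $F_{\vec{b}}=F_{bl}$, with the white case symmetric. The final verification you flag as the ``main obstacle'' is treated at the same level of detail in the paper itself (it is asserted rather than expanded), so your proposal matches the paper's proof in both approach and substance.
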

\begin{proof}
 Construct an arrow marked graph state of $F$ by choosing the duality state consisting of flat splits where the split at each vertex results in arcs that follow the boundaries of the black regions at that vertex.
The resulting ribbon graph $F_{\vec{b}}$ is exactly the Tait graph $F_{bl}$ since the cycles follow the boundaries of the black regions (giving one vertex in each black region), and, since the arrow marked state is flat, there is an edge added whenever two black regions meet at a vertex as prescribed by the definition of $F_{bl}$. The whiteface result is proved analogously by choosing the flat splits at each vertex that follow the boundaries of the white regions.
\end{proof}

\subsection{Twisted duals and cycle family graphs}

We are now ready for the first of our main theorems. We will begin by showing that all of the cycle family graphs of a $4$-regular embedded graph are twisted duals, thus generalizing the well known property of Equation~\ref{taitemedial}. We will then prove the converse of this result, that twisted duals are exactly the cycle family graphs of a medial graph. This converse generalizes Proposition~\ref{cyclefamilydual}.

\begin{theorem}\label{t.radmed1}
 If $F$ is a $4$-regular embedded graph and $F_{\vec{s}}$ and $F_{\vs} $ are two cycle family embedded graphs, then $F_{\vec{s}}$ and $F_{\vs} $ are twisted duals.
\end{theorem}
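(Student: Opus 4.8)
The plan is to exploit the identification, built into the definition of a cycle family graph, between the vertex set $\V(F)$ and the edge sets of both $F_{\vec{s}}$ and $F_{\vs}$; under this identification the two graphs share a common edge set, namely $\V(F)$, so it is meaningful to compare them inside a single orbit of the ribbon group action on $\calG_{(v(F))}$. To show they lie in one orbit, I would transform $\vec{s}$ into $\vs$ one vertex at a time, proving that altering the arrow marked vertex state at a single vertex $v$ realizes the action of a single element $\g \in \fG$ on the corresponding edge $e_v$, and then assemble these single-edge moves using the commutativity of operations at distinct edges (Proposition~\ref{switch}).

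The heart of the argument is a local dictionary at one vertex. Using the lemma that equivalent arrow marked graph states yield equal cycle family graphs, I would work up to equivalence, so that at each vertex there are exactly six arrow marked vertex states: the two splits and the crossing, each carried by either flat or twisted arrows (this is also the origin of the $6^{v(F)}$ bound). I would show these six states are in bijection with the six elements $\{1,\tau,\delta,\tau\delta,\delta\tau,\tau\delta\tau\}$ of $\fG$, in the sense that if $\vec{s}$ and $\vs$ agree at every vertex except $v$, then $F_{\vs} = (F_{\vec{s}})^{\g(e_v)}$ for the element $\g$ recording the change of state at $v$. This is verified by comparing local arrow presentations near the two $v$-labelled arrows: the edge $e_v$ of $F_{\vec{s}}$ is precisely the pair of $v$-arrows, and changing the vertex state at $v$ alters only these arrows and the way the circles reconnect through them, which is exactly the data that $\tau$ and $\delta$ modify according to Figure~\ref{taudelta} and the computations in Lemma~\ref{l.action1}. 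For anchoring, the two flat splits reproduce the identity and the partial dual on $e_v$, consistent with $(G_m)_{bl}=G$ and $(G_m)_{wh}=G^{\delta(E(G))}$ of Proposition~\ref{cyclefamilydual} and with Proposition~\ref{p.cyrad}, while passing from flat to twisted arrows reverses exactly one $v$-arrow and hence composes with $\tau$; the crossing supplies the remaining two elements.

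I expect the main obstacle to be the global bookkeeping hidden inside this local check, rather than the arrow directions themselves. Switching one split to the other, or to the crossing, can merge or separate the cycles of the underlying graph state, and therefore change the number of vertices of the cycle family graph; so the verification is not purely local in appearance. The key is that the partial dual $\delta$ produces exactly the same merging or splitting of boundary circles at its edge, so one must confirm that the reconnection of circles induced at $v$ agrees with the reconnection prescribed by the corresponding word in $\tau$ and $\delta$, and not merely that the local arrow pattern matches. Once this dictionary is in place the theorem follows at once: writing $A_\g \subseteq \V(F)$ for the set of vertices where the change of state from $\vec{s}$ to $\vs$ is recorded by $\g$, the nontrivial $A_\g$ (together with the vertices where the states agree, recorded by $1$) partition $\V(F) = E(F_{\vec{s}})$, and since the single-edge actions at distinct edges commute by Proposition~\ref{switch} we obtain $F_{\vs} = (F_{\vec{s}})^{\prod_\g \g(A_\g)}$, exhibiting $F_{\vs}$ as a twisted dual of $F_{\vec{s}}$ in the sense of Definition~\ref{geo twistdual}.
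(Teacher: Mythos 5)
Your proposal is correct and follows essentially the same route as the paper's proof: reduce to the case where $\vec{s}$ and $\vs$ differ at a single vertex $v$, verify a local dictionary (in the arrow presentation near the two $v$-labelled arrows) matching the six arrow marked vertex states to the six elements of $\fG$ acting on $e_v$, and compose the single-edge moves. Your worry about global merging and splitting of cycles is resolved exactly as you suspect: since both $\tau$, $\delta$ and the cycle family construction are defined purely locally on arrow presentations, the reconnection of circles is determined by the same local arrow data in both cases, which is why the paper's purely local check suffices.
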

\begin{proof}
It suffices to show that if the arrow marked graph states $\vec{s}$ and $\vs$ differ at exactly one vertex then $F_{\vec{s}}$ and $F_{\vs}$ are twisted duals. To show this, assume that $\vec{s}$ and $\vs$ differ at the vertex $v\in \V(F)$ and that $e_v$ is the label of the edge in $F_{\vec{s}}$ and the corresponding edge in $F_{\vs}$ that arises from the pair of $v$-labelled arrows in $\vec{s}$ and $\vs$. We then need to show that $\g(F_{\vec{s}}, e_v)=(F_{\vs}, e_v)$ for some $\g \in \fG$. 
To show this, we may assume without loss of generality (as $\fG$ is a group) that the arrow marked vertex state at $v$ in $\vec{s}$ is a split with flat arrow markings, so that
locally, (the arrow presentation of) $F_{\vec{s}}$ is \raisebox{-6mm}{\includegraphics[height=12mm]{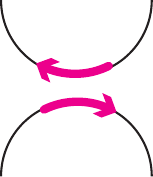}}.  
Then, in $\vs$, the arrow presentation of $F_{\vs}$ is locally one of 
\[     \raisebox{-6mm}{\includegraphics[height=12mm]{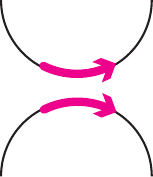}} \quad = \tau\left(  \; \raisebox{-6mm}{\includegraphics[height=12mm]{a9}} \; \right),  \quad \quad 
 \raisebox{-6mm}{\includegraphics[height=12mm]{a1}} \quad = \delta \left(  \; \raisebox{-6mm}{\includegraphics[height=12mm]{a9}} \; \right),  \quad \quad 
  \raisebox{-6mm}{\includegraphics[height=12mm]{a2}} \quad = \tau\delta \left(  \; \raisebox{-6mm}{\includegraphics[height=12mm]{a9}} \; \right), 
\]
\[     \raisebox{-6mm}{\includegraphics[height=12mm]{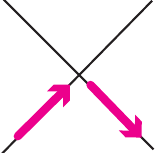}} \quad = \delta\tau\left(  \; \raisebox{-6mm}{\reflectbox{\includegraphics[height=12mm]{a9}}} \; \right)  \quad \quad \text{or} \quad \quad
 \raisebox{-6mm}{\includegraphics[height=12mm]{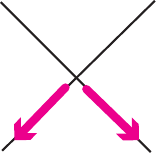}} \quad = \tau\delta\tau\left(  \; \raisebox{-6mm}{\reflectbox{\includegraphics[height=12mm]{a9}}} \; \right)  \;\raisebox{0mm}{,}
\]
where the arrow presentations are all identical outside of the region shown in the diagrams.  
Thus we have shown that when $\vec{s}$ and $\vs$ differ at $v$, then  $\g(F_{\vec{s}}, e_v)=(F_{\vs}, e_v)$ for some $\g \in \fG$, as required.
\end{proof}

\medskip

Suppose that $G$ and $H$ are embedded graphs  with the property that $H=(G_m)_{\vec{s}}$. Then Theorem~\ref{t.radmed1} tells us that $H$ and $G$ are necessarily twisted duals. Below, in  Theorem~\ref{t.radmed}, we will see that the converse of this statement also holds. That is, if $H$ and $G$ are  twisted duals, then $H=(G_m)_{\vec{s}}$, for some state $\vec{s}$.  Theorem~\ref{t.radmed} combines Theorem~\ref{t.radmed1} with this converse to say that $G$ and $H$ are twisted duals if and only if they are both cycle family graphs of the same embedded graph $G_m$.  This can be stated more concisely as $\mathcal{C}(G_m) = Orb(G)$.

 Theorem~\ref{t.radmed} is one of the main results of this paper, since it provides a characterization of twisted duals in terms of medial graphs, and states that the  ribbon group action on an embedded graph $G$ generates precisely the set of cycle family graphs of medial graph of $G$.

\begin{theorem}\label{t.radmed}
If $G$ is an embedded graph, then the set of cycle family graphs of $G_m$ is precisely the set of all twisted duals of $G$, {\em i.e.} 
\[
\mathcal{C}(G_m) = Orb(G).
\]
\end{theorem}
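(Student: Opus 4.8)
The plan is to prove the two inclusions $\mathcal{C}(G_m) \subseteq Orb(G)$ and $Orb(G) \subseteq \mathcal{C}(G_m)$ separately. The first will follow almost immediately from Theorem~\ref{t.radmed1} once I know that $G$ is itself a cycle family graph of its own medial graph; the second requires promoting the local computation of Theorem~\ref{t.radmed1} into a precise dictionary between the ribbon group action and the choice of arrow marked vertex states.

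First I would show that $G \in \mathcal{C}(G_m)$. The medial graph $G_m$ carries the canonical checkerboard colouring in which the black faces contain the vertices of $G$ and the white faces contain the faces of $G$ (the four faces meeting at each degree-$4$ vertex of $G_m$ alternate between these two types, so this colouring is proper). Proposition~\ref{cyclefamilydual} then gives $(G_m)_{bl} = G$, and Proposition~\ref{p.cyrad} realizes this blackface Tait graph as the cycle family graph $(G_m)_{\vec{b}}$ arising from a duality state $\vec{b}$. Hence $G = (G_m)_{\vec{b}} \in \mathcal{C}(G_m)$. Combining this with Theorem~\ref{t.radmed1}, which asserts that any two cycle family graphs of $G_m$ are twisted duals of one another, every $F_{\vec{s}} \in \mathcal{C}(G_m)$ is a twisted dual of $G$ and therefore lies in $Orb(G)$. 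This establishes $\mathcal{C}(G_m) \subseteq Orb(G)$.

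For the reverse inclusion I would use the natural identification of $E(G)$ with $V(G_m)$ (each edge of $G$ carries exactly one degree-$4$ vertex of $G_m$), which in turn is identified with $E(F_{\vec{s}})$ for any arrow marked graph state $\vec{s}$. Starting from the duality state $\vec{b}$ with $G = (G_m)_{\vec{b}}$, I would reread the local computation in the proof of Theorem~\ref{t.radmed1}, not merely as ``there exists $\g \in \fG$'' but as an exhaustive dictionary: applying each of the six elements of $\fG$ to the edge $e_v$ of $G$ associated to a vertex $v$ of $G_m$ yields precisely the six inequivalent arrow marked vertex states at $v$, with all other vertices left unchanged. Since operations on distinct edges commute (Proposition~\ref{switch}) and the arrow marked vertex states are selected independently at each vertex, an arbitrary $\vg \in \fG^{e(G)}$ applied to $G$ assembles into a single arrow marked graph state $\vec{s}$ of $G_m$, giving $\vg(G) = (G_m)_{\vec{s}} \in \mathcal{C}(G_m)$. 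Thus $Orb(G) \subseteq \mathcal{C}(G_m)$, and equality follows.

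The main obstacle is the verification underlying the reverse inclusion: that the six elements of $\fG$ acting at a single edge correspond \emph{bijectively} to the six inequivalent arrow marked vertex states at the associated vertex of $G_m$, starting from the flat split that recovers $G$. The existence half of this correspondence is exactly the diagram chase already displayed in Theorem~\ref{t.radmed1}; what needs care is confirming that the correspondence is exhaustive (all six states are attained, matching the $6^{v(G_m)} = 6^{e(G)}$ bookkeeping) and that simultaneous application across all edges glues correctly into a global arrow marked graph state. The latter is precisely where Proposition~\ref{switch} is essential, since it guarantees that the per-vertex local modifications are genuinely independent and can be performed in any order without interference.
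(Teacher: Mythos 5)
Your proposal is correct and follows essentially the same route as the paper: the forward inclusion via Proposition~\ref{p.cyrad} (so that $G=(G_m)_{\vec{b}}=(G_m)_{bl}$) combined with Theorem~\ref{t.radmed1}, and the reverse inclusion by matching the six local forms of a twisted dual at an edge $e$ with the six arrow marked vertex states at the corresponding vertex $v_e$ of $G_m$, glued edge-by-edge using commutativity. The only cosmetic difference is that you make the six-element dictionary and the role of Proposition~\ref{switch} explicit, whereas the paper leaves the gluing implicit after displaying the two lists of six local configurations.
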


\begin{proof}  By Theorem~\ref{t.radmed1}, if $H \in \mathcal{C}(G_m)$, then $H \in Orb((G_m)_{\vec{s}}))$ for any arrow marked state $\vec{s}$.  In particular this is true for the arrow marked state $\vec{b}$ 
corresponding to the black face graph that is guaranteed to exist by Proposition~\ref{p.cyrad}, so that $(G_m)_{\vec{b}} =(G_m)_{bl}= G$.  Thus $\mathcal{C}(G_m) \subseteq Orb(G)$.

To show that $\mathcal{C}(G_m) \supseteq Orb(G)$, we need to show that if $G$ and $G^{\prime}$ are twisted duals of one another, then they are both cycle family ribbon graphs of the medial ribbon graph $G_m$ of $G$. To show this,
consider locally an edge $e$ of $G$:  \raisebox{-5mm}{\includegraphics[height=10mm]{a7}} .  
At the edge $e$, the twisted dual will assume one of the following six forms.
\[ \includegraphics[height=12mm]{a7}\;\raisebox{6mm}{,} \quad \quad  \includegraphics[height=12mm]{a2}\;\raisebox{6mm}{,} \quad  \quad\includegraphics[height=12mm]{a9}\;\raisebox{6mm}{,} \quad \quad \includegraphics[height=12mm]{a10}\;\raisebox{6mm}{,} \quad \quad \raisebox{12mm}{\includegraphics[height=12mm, angle=-90]{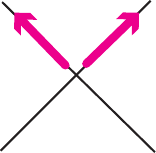}}\;\raisebox{6mm}{,} \quad \quad \raisebox{6mm}{ or }  \quad
\includegraphics[height=12mm]{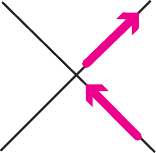} \;\raisebox{6mm}{.} \]
To prove the result it is enough to show that each of these six forms arise as an arrow marked vertex state of $v_e\in \V(G_m)$. (This is since the local configurations at the edges of the arrow presentation of $G$ are connected to each other in the same way that the local configurations at the vertices of $G_m$ are connected to each other to form the cycles.)

Now the vertex $v_e$ of $G_m$ is embedded in the edge $e$ of $G$ thus: \raisebox{-6mm}{\includegraphics[height=12mm]{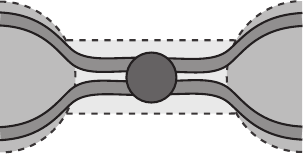}}.
We then see that the possible arrow marked vertex states of $G_m$ at $v_e$ are 
\begin{multline*}
\includegraphics[height=12mm]{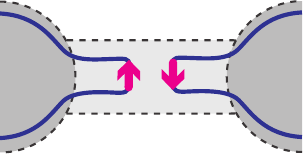}\;\raisebox{6mm}{,} \quad \quad  \includegraphics[height=12mm]{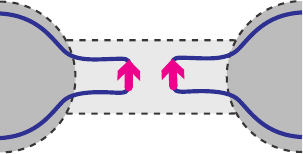}\;\raisebox{6mm}{,} \quad  \quad\includegraphics[height=12mm]{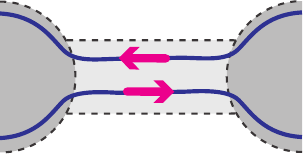}\;\raisebox{6mm}{,} \\  \includegraphics[height=12mm]{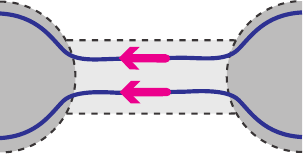}\;\raisebox{6mm}{,}\quad \quad \includegraphics[height=12mm]{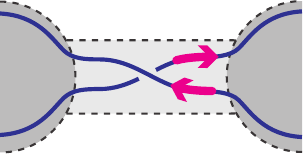}\;\raisebox{6mm}{,} \quad \quad \raisebox{6mm}{and}  \quad
\includegraphics[height=12mm]{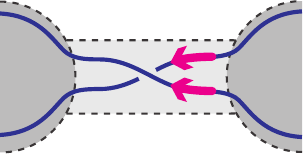}\;\raisebox{6mm}{,} 
\end{multline*}
as required. 
Thus, if $H \in Orb(G)$, then $H$ is a twisted dual of $G$, and by the above $H \in \mathcal{C}(G_m)$ and $Orb(G) \subseteq \mathcal{C}(G_m)$.  Thus $Orb(G) = \mathcal{C}(G_m)$ as desired.
\end{proof}

We note that Theorem~\ref{t.radmed1} does not follow from Theorem~\ref{t.radmed} since, unlike the planar case, there are $4$-regular embedded graphs that do not arise as the medial graph of an embedded graph. (To see this, observe that there are three embedded graphs with exactly one edge, giving rise to three medial graphs with two edges. However, as there are six $4$-regular embedded graphs  with two edges, not every $4$-regular embedded graph can be an embedded medial graph.)

It follows from Proposition~\ref{p.cyrad} that Theorem~\ref{t.radmed} generalizes Equation~\ref{taitemedial} and Propositions~\ref{cyclefamilydual}, since $(G_m)_{bl}=(G_m)_{\vec{b}}$ and $(G_m)_{wh}=(G_m)_{\vec{w}}$ are both in $\mathcal{C}(G_m)$, and $G$ and $G^* = G^{\delta(E(G))}$ are both in $Orb(G)$.

\bigskip

We have seen the relation between cycle family graphs and twisted duals. Since partial duality is a special case of twisted duality that is of independent interest, we specialize our results to partial duality.

\begin{theorem}\label{t.28} ~

\begin{enumerate} 
\item \label{4111} If $F$ is a $4$-regular embedded  graph and $F_{\vec{s}}$ and $F_{\vs}$ are  cycle family embedded graphs, for some $\vec{s}$ and $\vs$ that are duality states, then $F_{\vec{s}}$ and $F_{\vs}$ are partial duals.
\item \label{4112} If $G$ and $G^{\prime}$ are partial duals, and $G_m$ is the embedded medial graph of $G$, then there are duality states $\vec{s}$ and $\vs$ such that $G=(G_m)_{\vec{s}}$ and $G^{\prime}=(G_m)_{\vs}$.
\item \label{4113} $(G_m)_{\vec{s}}$ is a partial dual of $G$ if and only if $\vec{s}$ is a duality state.
\end{enumerate}
\end{theorem} 
\begin{proof}

The first two parts of the theorem can be proved by following the proof of Theorem~\ref{t.radmed} and restricting to partial duality and duality states. Due to this similarity  the proofs are omitted.

The proof of (\ref{4113}) is as follows. Since $G_m$ is a $4$-regular embedded graph and $G=(G_m)_{\vec{s}}$ for some duality state $\vec{s}$, necessity follows by (\ref{4111}). For sufficiency, we may assume without loss of generality that $(G_m)_{\vec{s}}$ is obtained from $G$ by forming the partial dual at a single edge $e$ (so $(G_m)_s=G^{\delta(e)}$). If $e$ is the edge 
\raisebox{-4mm}{\includegraphics[height=8mm]{a1}},
  then the corresponding edge in $(G_m)_{\vec{s}}$ is  \raisebox{-4mm}{\includegraphics[height=8mm]{a3}}, where the graphs are identical except in the region shown. 
It is easily seen that the only way that this configuration can arise from an arrow marked vertex state at $v_e \in \V(G_m)$ is if the arrowed vertex state is a flat split. 
\end{proof}

\bigskip

\subsection{Medial graphs and cycle family graphs}\label{medial and cycle}

In this subsection we will prove another of our main theorems, this one generalizing Equation~\ref{medialtaite} and Proposition~\ref{blacktomedial}. The theorem shows that  cycle 
family graphs extend the essential relations among plane graphs and their medial and Tait graphs. Furthermore, we show that if $F$ is a $4$-regular graph, then its set of cycle family graphs is precisely the set of embedded graphs that have embedded medial graphs isomorphic to $F$ as abstract graphs.  We actually prove a stronger result:  we not only show that the set of cycle family graphs of $F$ give all the embedded graphs whose medial graphs are isomorphic to $F$ as abstract graphs, but we also give specific conditions for when a medial graph is a partial Petrial of $F$.

\begin{theorem}\label{iso to F}
Let $F$ be a $4$-regular embedded graph. Then: 
\begin{enumerate}
\item if  $\vec{s}$ is an arrow marked graph state of $F$, then $(F_{\vec{s}})_m$ and $F$ are isomorphic as abstract graphs;
\item if $\vec{s}$ is a duality state, then $(F_{\vec{s}})_m$ and $F$ are also equivalent as locally embedded maps, or equivalently, are partial Petrials.

\end{enumerate}
\end{theorem}
\begin{proof}
In order to prove the statements it is enough to consider  what happens locally at a vertex $v$ of $F$ in the formation of $(F_{\vec{s}})_m$. We assume that  $v$ is incident to edges labelled $a_v, b_v, c_v,d_v$, and these edges meet $v$  in the cyclic order $(a_v \,b_v\,  c_v \,d_v)$. We will prove the second statement first.

To prove the second statement, assume that $\vec{s}$ is a duality state. Without loss of generality we may assume that the cycles defining the cycle family graph $F_{\vec{s}}$ travel between edges $a_v$ and $b_v$,  and between $c_v$ and $d_v$. This is shown as the first step in the figure below.
\[
\begin{array}{ccccc}
\includegraphics[height=2cm]{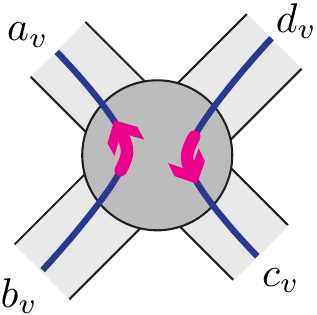} & \raisebox{1cm}{\includegraphics[width=13mm]{arrow}}&
\includegraphics[height=2cm]{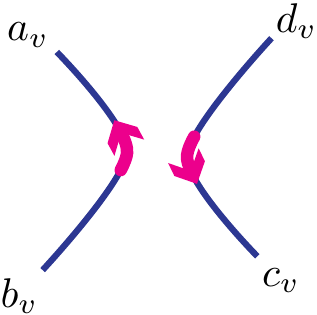} & \raisebox{1cm}{\includegraphics[width=13mm]{arrow}}  & 
\includegraphics[height=2cm]{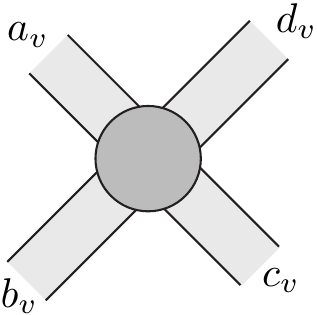} 
\\
F & & F_{\vec{s}} && (F_{\vec{s}})_m
\end{array}
   \]

Now taking the medial graph of $F_{\vec{s}}$ will add  a vertex incident with edges labelled  $a_v, b_v, c_v,d_v$ with the cyclic order $(a_v \,b_v\,  c_v \,d_v)$, as shown in the figure above, possibly with additional twisting of the edges. 
Note that since we do not know if the arcs $(a_v,b_v)$ and $(c_v,d_v)$ are connected to each other, we do not know if the edge $e_v$ of the cycle family graph will embed in the vertex $v$ shown the first figure. This means that when we form the medial graph $(F_s)_m$, as shown in the third step of the figure, we know nothing about the twisting of the edges. The figure is slightly misleading in this respect.   

 Finally, since up to twisting  the edges, $F$ and $(F_{\vec{s}})_m$ are identical at the vertex $v \in F$  and the corresponding vertex in $(F_{\vec{s}})_m$, for each vertex $v$ of $F$, and the endpoints marked  $a_v, b_v, c_v,d_v$ in the figure are connected in the same way in $F$ and in $(F_{\vec{s}})_m$, it follows that these two embedded graphs, when viewed as arrow presentations, have the same sequence of labels on the vertex disc, although possibly not the same directions of the arrows.  Thus, $(F_{\vec{s}})_m$ and $F$ are equivalent as locally embedded maps and  $(F_{\vec{s}})_m = F^{\tau(A)}$ for some $A \subseteq E(G)$. 

This completes the proof of the second statement.

The first statement is proven similarly: all that is needed to adapt the proof is to check that the remaining vertex states  at $v$ lead to an appropriate configuration at the corresponding vertex in $(F_{\vec{s}})_m$. This is seen by the following calculations:
\[
\includegraphics[height=2cm]{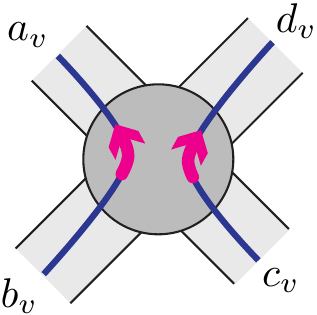} \quad \raisebox{1cm}{\includegraphics[width=13mm]{arrow}}\quad
\includegraphics[height=2cm]{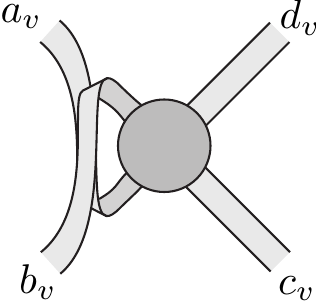}\raisebox{1cm}{,}
\]
\[
\includegraphics[height=2cm]{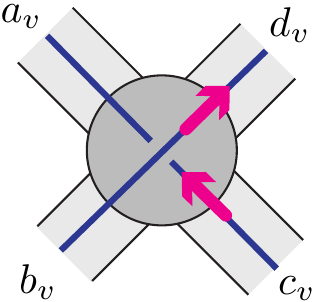} \quad \raisebox{1cm}{\includegraphics[width=13mm]{arrow}}\quad
\includegraphics[height=2cm]{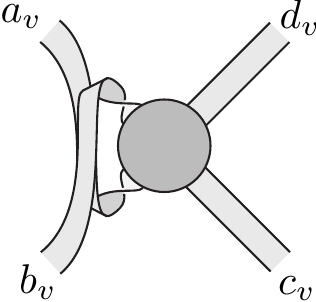}\raisebox{1cm}{,}
\]
\[
\includegraphics[height=2cm]{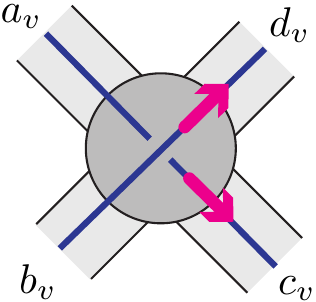} \quad \raisebox{1cm}{\includegraphics[width=13mm]{arrow}}\quad
\includegraphics[height=2cm]{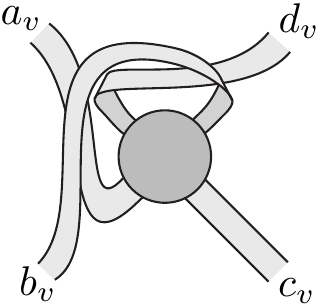}\raisebox{1cm}{.}
\]

There is a natural identification between the vertex set of $F$ and the edge set of $F_{\vec{s}}$, and a natural
identification between the edges set of $F_{\vec{s}}$ and the vertex set of $(F_{\vec{s}})_m$, and hence a natural identification
between the vertices of $F$ and the vertices of $(F_{\vec{s}})_m$.  Thus, the first statement follows by observing that these local configurations imply that a pair of vertices are connected by an edge in $F$ if and only if they are connected by an edge in $(F_{\vec{s}})_m$. 
\end{proof}

Again by Proposition~\ref{p.cyrad}, it is clear that Theorem~\ref{iso to F} generalizes Equation~\ref{medialtaite} and Proposition~\ref{blacktomedial}, since $F_{bl}=F_{\vec{b}}$ and $F_{wh}=F_{\vec{w}}$ for the duality states $\vec{b}$ and $\vec{w}$.

\bigskip

\begin{proposition}\label{p.star}
Let $G$ and $H$ be $4$-regular embedded graphs. If $G$ and $H$ are isomorphic as abstract graphs, then $\mathcal{C}(G) = \mathcal{C}(H)$. In particular, all embeddings of a given $4$-regular graph generate the same set of cycle family graphs.

\end{proposition}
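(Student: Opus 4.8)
The plan is to show that the set $\mathcal{C}(F)$ of cycle family graphs of a $4$-regular embedded graph $F$ depends only on the underlying abstract graph $\Gamma$ of $F$. Since $G\cong H$ as abstract graphs means $G$ and $H$ share the same underlying graph $\Gamma$, this immediately yields $\mathcal{C}(G)=\mathcal{C}(H)$, and the special case (all embeddings of a fixed $4$-regular graph giving the same set) is precisely the assertion that $\mathcal{C}(F)$ is embedding-independent.

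To organise the argument, I would partition the arrow marked graph states of $F$ according to their underlying \emph{pairing data}: the choice at each vertex $v$ of one of the three vertex states, recording only how the four half-edges at $v$ are grouped into the two arcs, while forgetting the arrow markings. As observed in the text preceding the definition of cycle family graphs, this pairing data determines the family of disjoint cycles of $F$ independently of the embedding. The first key point is that the set of \emph{available} pairings at each vertex is itself embedding-independent: the two splits and the crossing are precisely the three partitions of a four-element set into two pairs, so the embedding governs only which partition is called a split and which a crossing, never the set of three partitions. Ranging over all pairing data for $F$ is therefore the same combinatorial operation as ranging over all pairing data for any other embedded graph with underlying graph $\Gamma$.

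The second, and main, step is to identify the cycle family graphs sharing a fixed pairing choice $\{p_v\}_{v\in\V(F)}$. Here the only remaining freedom, up to equivalence of arrow marked states, is the flat/twisted choice at each vertex, and switching a vertex from flat to twisted reverses a single marking arrow, i.e. adds a half-twist to the corresponding edge $e_v$ and changes nothing else. Fixing the all-flat state as a base ribbon graph $R$, the cycle family graphs with pairing data $\{p_v\}$ are then exactly $\{R^{\tau(A)}\mid A\subseteq E(R)\}$, the set of all twists of $R$. By the Proposition in Subsection~\ref{ss.notable} characterising the twists of a ribbon graph, this set equals the set of all ribbon graphs whose underlying combinatorial map agrees with that of $R$. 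That combinatorial map is embedding-independent: its vertices are the cycles of the family, and the cyclic order of edge-labels around each is read off as the sequence of vertices of $F$ traversed by the corresponding closed walk, which depends only on $\Gamma$ and $\{p_v\}$. Hence the set of cycle family graphs with pairing data $\{p_v\}$ depends only on $\Gamma$ and $\{p_v\}$.

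Taking the union over all pairing choices then gives $\mathcal{C}(F)=\bigcup_{\{p_v\}}\{\text{ribbon graphs with combinatorial map }M(\Gamma,\{p_v\})\}$, an expression referring only to the abstract graph $\Gamma$, completing the proof. The step I expect to demand the most care is the separation carried out in the third paragraph: cleanly isolating the embedding-independent combinatorial-map data (coming from the closed walks determined by the pairings) from the freely choosable twist data (coming from the flat versus twisted arrows), and confirming that as the flat/twisted choices range over all $2^{v(F)}$ possibilities one obtains precisely all twists of the base ribbon graph $R$, so that the characterisation of twists by a common underlying combinatorial map applies.
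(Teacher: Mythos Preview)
Your proof is correct, but it is considerably more elaborate than the paper's. The paper argues in two lines: given a graph isomorphism $f\colon E(G)\to E(H)$, an arrow marked state $\vec{s}$ of $G$ induces one of $H$ by transporting each local arrow-pairing along $f$; the resulting arrow presentations are literally identical up to relabelling, so $G_{\vec{s}}=H_{f(\vec{s})}\in\mathcal{C}(H)$, and by symmetry $\mathcal{C}(G)=\mathcal{C}(H)$.

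You instead factor the set of arrow marked states as (pairing data) $\times$ (flat/twisted data), observe that both the available pairings and the resulting closed-walk combinatorics are embedding-independent, and then invoke the Proposition from Subsection~\ref{ss.notable} to identify the twist-layer over a fixed pairing with the set of all ribbon graphs having a prescribed combinatorial map. This is sound, and it does yield a little extra structural information: it makes explicit that $\mathcal{C}(F)$ is a union, indexed by transition systems of the abstract graph, of complete twist-orbits. But for the proposition as stated this machinery is unnecessary; the paper's direct transport of the arrow marked state via $f$ already delivers the conclusion without any decomposition or appeal to the combinatorial-map characterisation of twists.
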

\begin{proof}
Suppose the isomorphism of the underlying abstract graphs of $G$ and $H$ is via $f:E(G)\rightarrow E(H)$. Then an arrow marked state $\vec{s}$ of $G$ induces an arrow marked state $f(\vec{s})$ of $H$ by pairing edges $f(a)$ and $f(b)$ with an arrow from $f(a)$ to $f(b)$ in $f(\vec{s})$  if and only if  $\vec{s}$ pairs edges $a$ and $b$ with an arrow from $a$ to $b$. Thus the arrow marked disks of $G_{\vec{s}}$ are precisely the arrow marked disks of 
$H_{f(\vec{s})}$ under the relabelling given by $f$. Hence $G_{\vec{s}} = H_{f(\vec{s})}\in \mathcal{C}(H)$, and $\mathcal{C}(G) \subseteq \mathcal{C}(H)$. A symmetric argument shows that   $\mathcal{C}(H) \subseteq \mathcal{C}(G)$.
\end{proof}

\bigskip

With the following theorem we are now able to answer the original problem of
finding the exact set of embedded graphs that have medial graphs isomorphic
to a given $4$-regular graph $F$.  

\begin{theorem} \label{cycle to medial}
If $F$ is any $4$-regular graph, then the set of cycle family graphs of any embedding of $F$ is precisely the set of all embedded graphs $G$ such that  $G_m$ and  $F$ are equivalent as abstract graphs, {\em i.e.} if $\widetilde{F}$ is any embedding of $F$, then 
\[
\mathcal{C}(\widetilde{F})= \{G|G_m \cong \widetilde{F}\}= \{G|G_m  \cong F\}.
\]
\end{theorem}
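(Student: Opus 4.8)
The plan is to obtain this theorem as a direct consequence of the three results already in place, namely Theorem~\ref{iso to F}, Proposition~\ref{p.star}, and Theorem~\ref{t.radmed}; no new geometric argument should be needed. First I would dispatch the second equality $\{G \mid G_m \cong \widetilde{F}\} = \{G \mid G_m \cong F\}$. Since $\widetilde{F}$ is by definition an embedding of the abstract graph $F$, we have $\widetilde{F} \cong F$, and because isomorphism of abstract graphs is transitive, $G_m \cong \widetilde{F}$ holds exactly when $G_m \cong F$ holds. This reduces the theorem to the single set equality $\mathcal{C}(\widetilde{F}) = \{G \mid G_m \cong \widetilde{F}\}$, which I would prove by two inclusions.

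For the inclusion $\mathcal{C}(\widetilde{F}) \subseteq \{G \mid G_m \cong \widetilde{F}\}$, I would take $G \in \mathcal{C}(\widetilde{F})$, so that $G = \widetilde{F}_{\vec{s}}$ for some arrow marked graph state $\vec{s}$ of $\widetilde{F}$. Applying Theorem~\ref{iso to F} to $\widetilde{F}$ and $\vec{s}$ yields $G_m = (\widetilde{F}_{\vec{s}})_m \cong \widetilde{F}$, regardless of whether $\vec{s}$ is a duality state, since both parts of that theorem give isomorphism as abstract graphs. Hence $G$ lies in the right-hand set.

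For the reverse inclusion, I would begin with $G$ satisfying $G_m \cong \widetilde{F}$. Medial graphs are $4$-regular by construction and $\widetilde{F}$ is $4$-regular by hypothesis, so Proposition~\ref{p.star} applies to the pair $(G_m, \widetilde{F})$ and gives $\mathcal{C}(G_m) = \mathcal{C}(\widetilde{F})$. Theorem~\ref{t.radmed} then identifies $\mathcal{C}(G_m)$ with $Orb(G)$, and $G$ belongs to its own orbit. Chaining these, $G \in Orb(G) = \mathcal{C}(G_m) = \mathcal{C}(\widetilde{F})$, which completes the inclusion and hence the proof.

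Because the substantive content lives in the cited results, the work here is almost entirely bookkeeping. The only points requiring care are verifying that every medial graph is genuinely $4$-regular (so that Proposition~\ref{p.star} is legitimately applicable), and consistently distinguishing isomorphism as abstract graphs, written $\cong$, from equivalence as embedded graphs. The closest thing to an obstacle is simply selecting the correct prior result for each inclusion --- Theorem~\ref{iso to F} going one way, and the combination of Proposition~\ref{p.star} with Theorem~\ref{t.radmed} going the other.
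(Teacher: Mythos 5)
Your proposal is correct and follows essentially the same route as the paper: the forward inclusion via Theorem~\ref{iso to F} is identical, and the reverse inclusion uses the same transfer through Proposition~\ref{p.star}, with the only cosmetic difference being that you obtain $G \in \mathcal{C}(G_m)$ from Theorem~\ref{t.radmed} together with $G \in Orb(G)$, whereas the paper cites Proposition~\ref{p.cyrad} (i.e.\ $G = (G_m)_{\vec{b}}$) directly --- both of which rest on the same fact. No gaps.
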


\begin{proof} 
 If $G \in \mathcal{C}(\widetilde{F})$, then $G=\widetilde{F}_{\vec{s}}$, for some state $\vec{s}$, and hence $G_m=(\widetilde{F}_{\vec{s}})_m \cong \widetilde{F} \cong F$, by Theorem~\ref{iso to F}.

Conversely, if $G_m\cong \widetilde{F}\cong F$, then $G_m$ and $F'$ are equivalent as embedded  graphs, for some embedding $F'$ of $F$. By Proposition~\ref{p.cyrad}, $G=(G_m)_{\vec{b}} = F'_{\vec{b}} \in  \mathcal{C}(F')$. Finally, by Proposition~\ref{p.star}, we have $\mathcal{C}(F')=\mathcal{C}(\widetilde{F})$, completing the proof.

\end{proof}

Theorem \ref{cycle to medial} may be interpreted as follows.
\begin{corollary}
If $F$ is any $4$-regular graph, then the set of cycle family graphs of any embedding of $F$ is the set of Tait graphs of all checkerboard colourable embeddings of $F$.

\end{corollary}

\begin{proof}
This follows immediately from Theorem \ref{cycle to medial} since if $F \cong G_m$ for some $G$, then $G_m$ is a checkerboard colourable embedding of $F$, and $G$ is one of its Tait graphs.

\end{proof}

We illustrate Theorem~\ref{cycle to medial} with the following example.
\begin{example} Let $F$ be the abstract graph with one vertex and two loop edges. 
To calculate the set $\{G|G_m  \cong F\}$, observe that if $G_m \cong F$, then $G$ must be a connected embedded graph with one edge. There are three such embedded graphs and it is easy to verify that the medial graph of each of these three embedded  graphs is isomorphic to $F$. Thus 
\[ \{G|G_m  \cong F\} = \left\{   \raisebox{-4mm}{\includegraphics[height=10mm]{cfgex2}}\;, \;\;\raisebox{-4mm}{\includegraphics[height=10mm]{cfgex3}}\;, \;\; \raisebox{-4mm}{\includegraphics[height=10mm]{cfgex4}}  \right\}.  \]
It is worthwhile emphasizing that although there are six distinct embedded graphs isomorphic to $F$ as abstract graphs, only two of these are checkerboard colourable and are therefore medial graphs of some embedded graph. One of these is the medial graph of both the path on one edge and of a single untwisted loop, which are duals of one another; the other is the medial graph of a single twisted loop, which is self-dual.

Now, to construct  $\mathcal{C}(\widetilde{F})$ we need to chose any one of the six embeddings of $F$. For this example we choose an embedding on the Klein bottle and set $\widetilde{F}=  \raisebox{-4mm}{\includegraphics[height=10mm]{cfgex1}} $. (Corollary~\ref{cycleff} below states that  any other choice of embedding of $F$ (including non-checkerboard colourable embeddings)  will result in the same set of cycle family graphs.)   From Example~\ref{e.cfg}, we have 
\[ \mathcal{C}(\widetilde{F})= \left\{   \raisebox{-4mm}{\includegraphics[height=10mm]{cfgex2}}\;, \;\;\raisebox{-4mm}{\includegraphics[height=10mm]{cfgex3}}\;, \;\; \raisebox{-4mm}{\includegraphics[height=10mm]{cfgex4}}  \right\} =\{G|G_m  \cong F\} ,  \]
as is required by the theorem.
\end{example}

Theorem~\ref{cycle to medial} can be used to characterize $4$-regular embedded graphs that are isomorphic as graphs, in terms of cycle family graphs: 
\begin{corollary}\label{cycleff}
Let  $F$ and $F'$ be two $4$-regular embedded graphs. Then  $F$ and $F'$ are isomorphic as abstract graphs ({\em i.e.} $F \cong F'$) if and only if they admit the same set of cycle family graphs ({\em i.e.} $\mathcal{C}(F)= \mathcal{C}(F')$).
\end{corollary}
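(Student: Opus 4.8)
The plan is to dispatch the two implications separately, leaning on the two results that immediately precede the corollary.

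The forward direction, $F\cong F'\Rightarrow \mathcal{C}(F)=\mathcal{C}(F')$, is nothing more than Proposition~\ref{p.star}, which asserts precisely that 4-regular embedded graphs that are isomorphic as abstract graphs have identical sets of cycle family graphs. So here there is nothing to do beyond citing it.

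For the reverse direction, $\mathcal{C}(F)=\mathcal{C}(F')\Rightarrow F\cong F'$, the key tool is Theorem~\ref{cycle to medial}. First I would observe that $F$ is itself an embedding of its own underlying 4-regular abstract graph, so Theorem~\ref{cycle to medial} applies with $F$ playing the role of $\widetilde{F}$, giving $\mathcal{C}(F)=\{G\mid G_m\cong F\}$; likewise $\mathcal{C}(F')=\{G\mid G_m\cong F'\}$. Next I would note that $\mathcal{C}(F)$ is nonempty, since any choice of a vertex state at each vertex of $F$, equipped with arrow markings, constitutes an arrow marked graph state and hence produces at least one cycle family graph. Picking any $G$ in the common set $\mathcal{C}(F)=\mathcal{C}(F')$, its membership in $\mathcal{C}(F)$ forces $G_m\cong F$ while its membership in $\mathcal{C}(F')$ forces $G_m\cong F'$; chaining these gives $F\cong G_m\cong F'$, so $F$ and $F'$ are isomorphic as abstract graphs, as required.

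The argument is short and essentially a bookkeeping chase through the preceding theorem, so I do not anticipate a serious technical obstacle. The only two points requiring care are the ones just flagged: verifying that Theorem~\ref{cycle to medial} is legitimately invoked by treating each embedded graph as an embedding of itself, and confirming that the common set of cycle family graphs is nonempty so that an element $G$ is genuinely available to chain the isomorphisms. Neither is difficult, but omitting the nonemptiness remark would leave the reverse implication formally incomplete.
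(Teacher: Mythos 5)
Your proof is correct and follows essentially the same route as the paper, which likewise deduces the result from the characterization $\mathcal{C}(F)=\{G\mid G_m\cong F\}$ of Theorem~\ref{cycle to medial}. The only difference is that you make explicit the nonemptiness of $\mathcal{C}(F)$ needed to chain the isomorphisms in the reverse direction, a point the paper's one-line proof leaves tacit.
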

\begin{proof}
The result follows since, by Theorem~\ref{cycle to medial},
\[  \mathcal{C}(F)= \{G|G_m \cong F\} = \{G|G_m \cong F'\}=\mathcal{C}(F'). \]
\end{proof}

A notable special case of corollary \ref{cycleff} occurs when $F$ and $F'$ are checkerboard colourable.
\begin{corollary} \label{cycleff cor}
Let $F$ and $F'$ be $4$-regular, checkerboard colourable,  embedded graphs, with $F \cong F'$, then the four Tait graphs, $F_{wh}$, $F_{bl}$,  $F'_{wh}$ and $F'_{bl}$, of $F$ and $F'$, are all twisted duals of one another.
\end{corollary}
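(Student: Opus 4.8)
The plan is to show that all four Tait graphs lie in one common set of cycle family graphs, and then to invoke the fact that any two cycle family graphs of a fixed $4$-regular embedded graph are twisted duals. The whole argument is a short deduction from results already in hand.

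First I would record that each of the four Tait graphs is a cycle family graph. Since $F$ is checkerboard colourable, Proposition~\ref{p.cyrad} supplies duality states $\vec{b}$ and $\vec{w}$ with $F_{\vec{b}}=F_{bl}$ and $F_{\vec{w}}=F_{wh}$, so that $F_{bl},F_{wh}\in\mathcal{C}(F)$. Applying the same proposition to $F'$ gives duality states realizing $F'_{bl},F'_{wh}\in\mathcal{C}(F')$. Next I would collapse the two sets into one: because $F\cong F'$ as abstract graphs, Corollary~\ref{cycleff} yields $\mathcal{C}(F)=\mathcal{C}(F')$, so all four Tait graphs in fact belong to the single set $\mathcal{C}(F)$.

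Finally I would apply Theorem~\ref{t.radmed1}, which asserts that any two cycle family graphs of the $4$-regular embedded graph $F$ are twisted duals of one another. Since each of $F_{wh}, F_{bl}, F'_{wh}, F'_{bl}$ is a cycle family graph of $F$, every pair among them is a pair of twisted duals. As being twisted duals amounts to lying in a common orbit of the ribbon group action (Definition~\ref{geo twistdual}), and these orbits partition $\mathcal{G}$, pairwise twisted duality forces all four graphs into a single orbit; hence they are all twisted duals of one another, as claimed.

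I do not anticipate a genuine obstacle here: the content is pure bookkeeping with the theorems already proved. The only points requiring care are to notice that the two hypotheses are exactly what the cited results consume—checkerboard colourability feeds Proposition~\ref{p.cyrad} (so that the Tait graphs are genuine cycle family graphs), while $F\cong F'$ feeds Corollary~\ref{cycleff} (so that the two cycle-family sets coincide)—and to observe that the phrase ``twisted duals of one another'' is legitimate precisely because twisted duality is an equivalence relation, the orbits of $\fG^{e(F)}$ being disjoint.
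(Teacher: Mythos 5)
Your proof is correct and follows essentially the same route as the paper, which also deduces the corollary immediately from Corollary~\ref{cycleff} and Theorem~\ref{t.radmed1}; you have simply made explicit the intermediate use of Proposition~\ref{p.cyrad} to place the Tait graphs inside the cycle-family sets, which the paper leaves implicit.
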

\begin{proof}
This follows immediately from Corollary~\ref{cycleff} and Theorem~\ref{t.radmed1}. 
\end{proof}

Theorem \ref{cycle to medial} also leads to a second characterization of the orbit of an embedded graph under the ribbon group action. The following result is one our our main results. It states that two embedded graphs are twisted duals if and only if their medial graphs are isomorphic as abstract graphs. This provides a way to lift isomorphism problems involving a medial graph $G_m$ to the embedded graph $G$ itself. 
\begin{theorem}\label{c.orbg}
If $G$ is an embedded graph and $\vec{s}$  is an arrow marked state of the embedded medial graph $G_m$, then $((G_m)_{\vec{s}})_m \cong G_m$, {\em i.e.}
\[
Orb(G) = \{H:H_m \cong G_m\}.
\]
\end{theorem}

\begin{proof}
This follows immediately from Theorems~\ref{t.radmed} and~\ref{cycle to medial}.
\end{proof}

\begin{example}
To illustrate Theorem~\ref{c.orbg}, let $G$ denote the  plane digon. The orbit of $G$ was given in Subsection \ref{TwDu} in  Figure~\ref{Fig:orbit}. It is readily verified that every embedded graph in  $Orb(G)$ has a  medial graph isomorphic to  $G_m$. On the other hand, if $H$ has a medial graph isomorphic to $G_m$ (so $H\in \{H\;|\;H_m \cong G_m\}$), then $H$ must have two edges. By calculating  the medial graphs of each embedded graph with two edges, one can easily check that  any such $H\in Orb(G)$.
\end{example}

\subsection{Partial duals, partial Petrials, and Medial graphs}\label{ss.mgpd}

As mentioned previously, geometric duality can be completely characterized in terms of equivalence of embedded medial graphs:
\[  \{ G, G^*\} = \{H \;|\; H_m =G_m\}. \]
We have seen above that the set of twisted duals of an embedded graph $G$  arises as the set of graphs with the same medial graph  as $G$  when the medial graphs are considered as abstract graphs:
 \[  Orb(G)= \{H \;|\; H_m \cong G_m\}. \]
From these result, we can posit that notions of duality are generated by notions of graph equivalence, that is, we can take the point of view that the set $\{H \;|\; H_m \sim G_m\}$ describes a set of generalized dual graphs for each graph equivalence $\sim$.  In the section we consider the duality generated by considering $H_m$ and $G_m$ to be equivalent as locally embedded maps.  In particular, in Theorem~\ref{c.orbd} we will show that for each embedded graph $G$, 
\[ Orb_{(\delta )}(G)=\{H\;|\;G_m \text{ and } H_m \text{ are equivalent as locally embedded maps}\},\]
and therefore equivalence  as locally embedded maps induces the concept of partial duality.
Remarkably, as a consequence of this, we shall see that partial duality is intimately connected to the partial Petrials of medial graphs. 
\medskip

Let $F$ be a $4$-regular embedded graph. We will denote the subset of cycle family graphs of $F$ that are obtained from duality states by  $\mathcal{C}_{\delta} (F)$, that is
\[ \mathcal{C}_{\delta} (F) := \{  F_{\vec{s}} \; |\; \vec{s} \text{ is a duality state of } F \} .\]

Theorem~\ref{dual to medial} states that the set $\mathcal{C}_{\delta} (F)$ of cycle family graphs generated by duality states characterizes the set of those embedded graphs whose whose medial graphs are equivalent to $F$ as locally embedded maps. 
The theorem should be compared with Theorem~\ref{cycle to medial}. 

\begin{theorem} \label{dual to medial}
If $F$ is any $4$-regular embedded graph, then the subset of cycle family graphs of $F$ that are generated by all duality states is precisely the set of all embedded graphs $G$ such that $G_m$ and  $F$ are equivalent as locally embedded maps, {\em i.e.} 
\[\begin{split}
\mathcal{C}_{\delta}(F)= \{G|G_m \text{ and } F \text{ are equivalent as locally embedded maps}\} \\   \quad \quad=  \{G\;|\;(G_m)^{\tau(A)} = F, \text{ for some } A\in E(G_m)  \}.
\end{split}\]
\end{theorem}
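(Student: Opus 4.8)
The plan is to prove that the three sets in the statement coincide by establishing the inclusions $\mathcal{C}_{\delta}(F)\subseteq\{G\mid G_m\text{ and }F\text{ equivalent as combinatorial maps}\}\subseteq\mathcal{C}_{\delta}(F)$ together with the equality of the second and third sets, following the pattern of the proof of Theorem~\ref{cycle to medial} but restricting throughout to duality states and replacing ``isomorphic as abstract graphs'' by ``equivalent as combinatorial maps''. I would first dispose of the equality between the second and third sets: since adding a half-twist to an edge never alters the underlying combinatorial map, the proposition of Subsection~\ref{ss.notable} identifying the twists of a graph with the locally admissible embeddings of its underlying combinatorial map shows that $G_m$ and $F$ share an underlying combinatorial map (up to equivalence) precisely when $F$ is equivalent, as an embedded graph, to a twist of $G_m$. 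Hence $G_m$ and $F$ are equivalent as combinatorial maps if and only if $(G_m)^{\tau(A)}=F$ for some $A\subseteq E(G_m)$.

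For the forward inclusion $\mathcal{C}_{\delta}(F)\subseteq\{G\mid G_m\text{ and }F\text{ equivalent as combinatorial maps}\}$, take $G=F_{\vec{s}}$ with $\vec{s}$ a duality state. Theorem~\ref{iso to F}(i) gives $G_m=(F_{\vec{s}})_m=F^{\tau(A)}$ for some $A\subseteq E(F)$; since $\tau^2=1$ on each edge we may reapply $\tau(A)$ to obtain $(G_m)^{\tau(A)}=F$, and in particular $G_m$ and $F$ are twists, hence equivalent as combinatorial maps. For the reverse inclusion, suppose $G_m$ and $F$ are equivalent as combinatorial maps. By Proposition~\ref{p.cyrad} the black-face duality state $\vec{b}$ of the medial graph $G_m$ realizes $G=(G_m)_{\vec{b}}$, so that $G\in\mathcal{C}_{\delta}(G_m)$. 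It therefore only remains to transfer this membership from $G_m$ across to $F$.

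The main obstacle is exactly this transfer, namely the assertion that $\mathcal{C}_{\delta}(F)=\mathcal{C}_{\delta}(F')$ whenever the $4$-regular embedded graphs $F$ and $F'$ are equivalent as combinatorial maps (equivalently, that $\mathcal{C}_{\delta}$ is left unchanged under twisting the ambient graph). I would obtain this as a refinement of Proposition~\ref{p.star}: a combinatorial-map isomorphism $f\colon F\to F'$ induces the state bijection $\vec{s}\mapsto f(\vec{s})$ with $F_{\vec{s}}=F'_{f(\vec{s})}$ exactly as in that proof, and the new content to verify is that $f$ carries duality states to duality states. This reduces to the local check that an isomorphism preserving the cyclic order (and local orientation data) at each $4$-valent vertex sends flat splits to flat splits: the two splits at a vertex are the two cyclically non-crossing pairings and so are determined by the cyclic order, while the flatness of the arrow markings is preserved under both global and vertex-local orientation reversals. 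This is precisely the step where the combinatorial-map hypothesis, rather than mere abstract-graph isomorphism, is essential, and I expect it to be the only genuinely delicate point. Granting it, $\mathcal{C}_{\delta}(G_m)=\mathcal{C}_{\delta}(F)$, whence $G\in\mathcal{C}_{\delta}(F)$, completing the reverse inclusion; combining the two inclusions with the first paragraph then yields the desired triple equality.
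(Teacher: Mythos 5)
Your proof is correct and follows essentially the same route as the paper's: part (i) of Theorem~\ref{iso to F} gives the forward inclusion, and Proposition~\ref{p.cyrad} together with transporting the duality state of $G_m$ across the twist relating $G_m$ to $F$ gives the reverse inclusion. The only cosmetic difference is that you package the transfer step as an explicit invariance lemma $\mathcal{C}_{\delta}(F)=\mathcal{C}_{\delta}(F')$ refining Proposition~\ref{p.star}, whereas the paper transports the single state $\vec{b}$ directly to a state $\vec{\beta}$ of $F$; the underlying verification (splits and flatness of the arrow markings are determined by the cyclic orders at the vertices) is the same.
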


\begin{proof}
The proof is similar to the proof of Theorem~\ref{cycle to medial}.  If $G \in \mathcal{C}_{\delta}(F)$, then $(G_m)^{\tau(A)} = F$,  for some  $A\in E(G_m)$,  by  Theorem~\ref{iso to F}.  On the other hand, if $G$ is an embedded graph such that  $G_m$ and $F$ are equivalent as locally embedded maps, then $G_m$ is checkerboard colourable with the black faces containing the vertices of $G$, where we view $G_m$ as being embedded in the ribbon graph $G$.  We take the arrowed state $\vec{b}$ of $G_m$, which is guaranteed to exist by Proposition~\ref{p.cyrad}, so that $(G_m)_{\vec{b}} = (G_m)_{bl}=G$.  
Since $(G_m)^{\tau(A)} = F$,  for some  $A\in E(G_m)$, by half-twisting some of the edges of $G_m$,  the arrowed state $\vec{b}$ of $G_m$ induces an arrow marked  state $\vec{\beta}$  in $F$. This induced arrow marked state is a duality state since   $\vec{b}$ was. Moreover, $G=F_{\vec{\beta}}$ since $\vec{b}$ and $\vec{\beta}$ are equivalent as arrow presentations. Thus $G \in \mathcal{C}_{\delta}(F)$ as required.
\end{proof}

Theorem \ref{dual to medial} can be used to characterize $4$-regular embedded graphs that are equivalent as locally embedded maps: 

\begin{corollary}\label{dualff}
Let  $F$ and $F'$ be two $4$-regular embedded graphs, then  $F$ and $F'$ are equivalent as locally embedded maps if and only if  $\mathcal{C}_{\delta}(F)= \mathcal{C}_{\delta}(F')$.
\end{corollary}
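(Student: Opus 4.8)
The plan is to reduce this to Theorem~\ref{dual to medial} in exactly the way Corollary~\ref{cycleff} reduces to Theorem~\ref{cycle to medial}; the only genuinely new ingredient is that equivalence of combinatorial maps is an equivalence relation, so in particular it is symmetric and transitive. First I would rewrite both sides using Theorem~\ref{dual to medial}, namely
\[
\mathcal{C}_{\delta}(F)=\{G \mid G_m \text{ and } F \text{ are equivalent as combinatorial maps}\},
\]
together with the analogous description of $\mathcal{C}_{\delta}(F')$. Everything after this is a manipulation of these two set descriptions, so no geometry of ribbon graphs is needed beyond what Theorem~\ref{dual to medial} already provides.

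For the ``only if'' direction I would assume that $F$ and $F'$ are equivalent as combinatorial maps. Then for an arbitrary embedded graph $G$, transitivity shows that $G_m$ is equivalent to $F$ as a combinatorial map precisely when $G_m$ is equivalent to $F'$ as a combinatorial map, so the two sets displayed above coincide and hence $\mathcal{C}_{\delta}(F)=\mathcal{C}_{\delta}(F')$. For the ``if'' direction I would assume $\mathcal{C}_{\delta}(F)=\mathcal{C}_{\delta}(F')$ and choose a graph $G$ in this common set; then $G_m$ is equivalent to $F$ as a combinatorial map and also equivalent to $F'$ as a combinatorial map, and symmetry together with transitivity forces $F$ and $F'$ to be equivalent as combinatorial maps.

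The hard part will not be any of the equivalences themselves but rather ensuring that the ``if'' direction has an element to work with: I must check that $\mathcal{C}_{\delta}(F)$ is non-empty, since otherwise no such $G$ can be selected. This is immediate, however, because choosing a flat split at every vertex of $F$ produces a duality state $\vec{s}$, whence $F_{\vec{s}}\in\mathcal{C}_{\delta}(F)$. With non-emptiness secured, the whole argument rests entirely on Theorem~\ref{dual to medial} and the formal (symmetry and transitivity) properties of the combinatorial-map equivalence relation.
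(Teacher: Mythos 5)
Your proposal is correct and follows essentially the same route as the paper, which likewise just rewrites $\mathcal{C}_{\delta}(F)$ and $\mathcal{C}_{\delta}(F')$ via Theorem~\ref{dual to medial} and appeals to the equivalence-relation properties of combinatorial-map equivalence. Your explicit check that $\mathcal{C}_{\delta}(F)$ is non-empty (needed for the ``if'' direction) is a small point the paper leaves implicit, but it does not change the argument.
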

\begin{proof}
The result follows since, by Theorem~\ref{dual to medial},
\[  \mathcal{C}_{\delta}(F)= \{G|(G_m)^{\tau(A)} = F, \text{ for some } A\in E(G_m)\} = \{G|(G_m)^{\tau(A)} = F', \text{ for some } A\in E(G_m)\}=\mathcal{C}_{\delta}(F'). \]
\end{proof}

In parallel with Corollaries \ref{cycleff} and \ref{cycleff cor}, a special case of Corollary \ref{dualff} occurs when $F$ and $F'$ are checkerboard colourable.
\begin{corollary}
Let $F$ and $F'$ be $4$-regular, checkerboard colourable,  embedded graphs, such that $F$ and $F'$ are equivalent as locally embedded maps. Then the four Tait graphs, $F_{wh}$, $F_{bl}$,  $F'_{wh}$ and $F'_{bl}$,  are all partial duals.
\end{corollary}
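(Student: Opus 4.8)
The plan is to mirror the argument for Corollary~\ref{cycleff cor}, replacing the abstract-graph machinery used there with the combinatorial-map machinery developed in this subsection. The guiding observation is that all four Tait graphs lie in a single set of cycle family graphs arising from duality states, and that any two members of such a set are partial duals.

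First I would record that each Tait graph is a cycle family graph coming from a duality state. By Proposition~\ref{p.cyrad} there are duality states $\vec{b}$ and $\vec{w}$ of $F$ with $F_{\vec{b}}=F_{bl}$ and $F_{\vec{w}}=F_{wh}$, so $F_{bl}, F_{wh}\in\mathcal{C}_{\delta}(F)$; symmetrically $F'_{bl}, F'_{wh}\in\mathcal{C}_{\delta}(F')$.

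Next, since $F$ and $F'$ are equivalent as combinatorial maps, Corollary~\ref{dualff} gives $\mathcal{C}_{\delta}(F)=\mathcal{C}_{\delta}(F')$. Consequently all four Tait graphs $F_{bl}, F_{wh}, F'_{bl}, F'_{wh}$ belong to the one set $\mathcal{C}_{\delta}(F)$; that is, each is a cycle family graph of $F$ obtained from some duality state. I would then invoke Theorem~\ref{t.28}(i): any two cycle family graphs of the same $4$-regular graph $F$ that arise from duality states are partial duals. Applying this to each pair among the four Tait graphs shows that they are pairwise partial duals, and since partial duality is the equivalence relation given by the orbits of the $\fG_{pd}^{e(G)}$ action, pairwise partial duality forces all four into a single partial-duality orbit.

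I do not anticipate a genuine obstacle: the statement is essentially an immediate consequence of Corollary~\ref{dualff} and Theorem~\ref{t.28}(i), exactly as Corollary~\ref{cycleff cor} follows from Corollary~\ref{cycleff} and Theorem~\ref{t.radmed1}. The only point deserving a moment's care is the final transitivity step, upgrading ``pairwise partial duals'' to ``all mutually partial duals,'' which is immediate from the group-action (hence equivalence-relation) description of partial duality recorded after its definition.
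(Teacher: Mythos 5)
Your proposal is correct and follows exactly the paper's route: the paper's proof simply cites Corollary~\ref{dualff} together with Theorem~\ref{t.28}, which is precisely the combination you use (with Proposition~\ref{p.cyrad} supplying the fact that the Tait graphs arise from duality states). You have merely written out the details that the paper leaves implicit.
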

\begin{proof}
This follows immediately from Corollary~\ref{dualff} and Theorem~\ref{t.28}. 
\end{proof}

The following result should be compared with Theorem~\ref{c.orbg}. It states that  two embedded graphs are partial duals if and only if their medial graphs are equivalent as locally embedded maps. This provides a characterization of the partial duals of an embedded graph in terms of medial graphs. 

\begin{theorem}\label{c.orbd}
If $G$ is an embedded graph and $\vec{s}$  is a duality state of the embedded medial graph $G_m$, then $((G_m)_{\vec{s}})_m$ and  $G_m$ are equivalent as locally embedded maps, {\em i.e.}
\[
Orb_{( \delta)}(G) = \{H \; |\; G_m \text{ and } H_m \text{ are equivalent as locally embedded maps}\} .
\]
\end{theorem}

\begin{proof}
This follows immediately from Theorems~\ref{t.radmed} and~\ref{cycle to medial}.

\end{proof}

The following corollary to Theorem \ref{c.orbd} now gives a remarkable connection between partial duals and partial Petrials, saying that the partial duals of a graph $G$ are precisely the graphs whose medial graphs are partial Petrials of $G_m$.

\begin{corollary}\label{dual petrials}
Let $G$ be an embedded graph. Then
\[
Orb_{( \delta)}(G) = \{H \; |\; G_m \text{ and } H_m \text{ are partial Petrials}\}, 
\]
\emph{i.e.} two graphs G and H are partial duals if and only if their medial graphs are partial Petrials.

\end{corollary}

\begin{proof}
This follows immediately from Theorem \ref{c.orbd} and that, by Proposition \ref{part comb}, two graphs are equivalent as locally embedded maps if and only if they are partial Petrials of one another.
\end{proof}

Theorem \ref{c.orbd} may also be interpreted in terms of Tait graphs, as follows.

\begin{corollary}  Let $G$ be an embedded graph, and suppose $F$ is isomorphic to $G_m$ as abstract graphs. Then

\[
Orb_{( \delta)}(G) = \{\text{Tait graphs of embeddings }  \tilde{F} \text{ of } F \; |\; \tilde{F} \text{ is a checkerboard colourable partial Petrial of } G_m\}.
\]
\end{corollary}

\begin{proof}
If $H$ is a partial dual of $G$, then by Corollary \ref{dual petrials}, $H_m =G_m^{\tau(A)}$ for some $A \in E(G)$.  But since by Proposition \ref{part comb}, $G_m$ and $G_m^{\tau(A)}$ are equivalent as locally embedded maps, and hence as abstract graphs, $G_m^{\tau(A)}$ is also equivalent to $F$ as an abstract graph.  Thus $G_m^{\tau(A)}$ is a checkerboard colourable embedding of $F$ that is a partial Petrial of $G_m$, and $H$ is a Tait graph of it.  Conversely, if $\tilde{F}$ is a checkerboard colourable embedding of $F$ that is a partial Petrial of $G_m$, and $H$ is one of its Tait graphs, then $H_m = G_m^{\tau(A)}$, and hence, by Corollary \ref{dual petrials}, $H$ is a partial dual of $G$.

\end{proof}

We can also use the relationships among partial duals, twisted duals and medial graphs, together with a result of Las~Vergnas \cite{Las78} and Kotzig \cite{Ko66},
to deduce some properties of the orbits under the ribbon group action.

\begin{proposition}
Let $G$ be a plane graph.  Then 
\[ \mathrm{max}\{  v(H)\; |\; H\in Orb(G)\}  =  \mathrm{max}\{  v(H)\; |\; H\in Orb_{( \delta)}(G)\}  .\]
\end{proposition}
\begin{proof}
Las Vergnas' Proposition~6.1 from \cite{Las78} implies that the maximum number of circuits  in any duality state of $G_m$ is equal to  the maximum number of circuits  in any  state of $G_m$. Since the cycles in the states of $G_m$ form the vertices of the cycle family graphs we have
\[ \mathrm{max}\{ v(H) \;|\; H\in \mathcal{C}_{\langle \delta\rangle}(G_m) \}  = \mathrm{max}\{ v(H) \;|\; H\in \mathcal{C}(G_m) \}. \]
The result then follows since $ \mathcal{C}_{\delta}(G_m) = Orb_{( \delta)}(G)$, by Theorem~\ref{dual to medial} and Theorem~\ref{c.orbd}; and since $ \mathcal{C}(G_m) = Orb(G)$, by Theorem~\ref{cycle to medial} and Theorem~\ref{c.orbg}.
\end{proof}

The following corollary relates the number of spanning trees of a ribbon graph $G$ and of its dual $G^*$ to the number of bouquets ({\em i.e.} embedded graphs with exactly one vertex) in $Orb_{( \delta)}(G)$.

\begin{proposition}
Let $G$ be a graph embedded in the plane, the torus or the real projective plane. In addition, let $\mathcal{B}_{(\delta)}(G) $ denote the  number of bouquets in $Orb_{( \delta)}(G)$.  Then $\mathcal{B}_{(\delta)}(G) $ is bounded above by the total number of spanning trees in $G$ and  $G^*$. Moreover, 
\[   \mathcal{B}_{(\delta)}(G)   \leq 2T(G;1,1),  \]
where $T(G;x,y)$ is the Tutte polynomial of $G$.
\end{proposition}
\begin{proof}
By Las Vergnas' Corollary~2.4 from \cite{Las78}, every duality state of $G_m$ that contains exactly one cycle corresponds to a unique spanning tree in $G$ or $G^*$. (The plane case of this result is due to Kotzig \cite{Ko66}.) 
Moreover,  every duality state of $G_m$ that contains exactly one cycle gives rise to a (not necessarily distinct) cycle family graph in  $\mathcal{C}_{\delta}(G_m)$ that has exactly one vertex.
Therefore,
\[ | \{ \text{spanning trees of } G \text{ or } G^* \} | \geq | \{   H\in \mathcal{C}_{\delta}(G_m) \; | \; H \text{ has exactly one vertex}  \}  |.  \]
Since $\mathcal{C}_{\delta}(G_m) =  Orb_{( \delta)}(G)$,
by Theorem~\ref{dual to medial} and Theorem~\ref{c.orbd}, it follows that 
\[ | \{ \text{spanning trees of } G \text{ or } G^* \} | \geq | \{   H\in Orb_{( \delta)}(G) \; | \; H \text{ has exactly one vertex}  \}  |   =  \mathcal{B}_{(\delta)}(G) .  \]
The result then follows by noting that    the number of spanning trees in a connected graph $G$ is $T(G;1,1)$, and that  $T(G,x,y)=T(G^*;x,y)$, and so 
\[ 2T(G;1,1) =   | \{ \text{spanning trees of } G \text{ or } G^* \} | \geq \mathcal{B}_{(\delta)}(G). \]
\end{proof}

The above result can also be extended to quasi-trees. From Dasbach et al. \cite{Da2}, 
a {\em quasi-tree} is an embedded graph with exactly one boundary component (or face). 
\begin{corollary}
Let $G$ be an embedded graph, then the number of bouquets in $Orb_{( \delta)}(G)$ is bounded above by the number of spanning quasi-trees of $G$.
\end{corollary}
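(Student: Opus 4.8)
The plan is to mirror the proof of the preceding corollary, replacing the Las Vergnas--Kotzig input (which is valid only in the plane, torus and projective plane) by the general relationship between one-vertex partial duals and spanning quasi-trees. By Corollary~\ref{c.orbd} together with Theorem~\ref{dual to medial}, every element of $Orb_{\langle \delta\rangle}(G)$ has the form $G^{\delta(A)}$ for some $A\subseteq E(G)$, and such a graph is a blossom precisely when $v(G^{\delta(A)})=1$. So the first task is to express $v(G^{\delta(A)})$ in terms of a spanning ribbon subgraph of $G$.

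The key step is the identity
\[
v(G^{\delta(A)}) = f\bigl(G|_A\bigr),
\]
where $G|_A=(V(G),A)$ is the spanning ribbon subgraph of $G$ on the edge set $A$ and $f$ counts boundary components. I would obtain this either by citing the enumerative formulas for partial duals (Chmutov~\cite{Ch1}, \cite{Mo4}), or internally in the language of this paper: by Corollary~\ref{c.orbd}, $G^{\delta(A)}$ equals the cycle family graph $(G_m)_{\vec{s}}$ of a duality state $\vec{s}$ determined by $A$, and since the vertices of a cycle family graph are its cycles, $v(G^{\delta(A)})$ is just the number of cycles of $\vec{s}$; a local analysis at each vertex of $G_m$ then identifies this cycle family with the set of boundary components of $G|_A$. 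The two extreme cases pin down the correspondence and serve as a sanity check: $A=\emptyset$ gives the black duality state $\vec{b}$ with $(G_m)_{\vec{b}}=G$ and $v(G)=f(G|_\emptyset)$ cycles, while $A=E(G)$ gives the white state $\vec{w}$ with $(G_m)_{\vec{w}}=G^{*}$ and $f(G)=v(G^{*})=f(G|_{E(G)})$ cycles, both in agreement with Proposition~\ref{p.cyrad}.

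Granting this identity, $G^{\delta(A)}$ is a blossom if and only if $f(G|_A)=1$, that is, if and only if $G|_A$ is a spanning quasi-tree of $G$ in the sense of~\cite{Da2}. (Here I would note that a ribbon graph with a single boundary component is automatically connected, so the condition $f(G|_A)=1$ does force $G|_A$ to be a genuine spanning quasi-tree.) Consequently the assignment $A\mapsto G^{\delta(A)}$ restricts to a surjection from the set of spanning quasi-trees of $G$ onto the set of blossoms in $Orb_{\langle \delta\rangle}(G)$, and the desired bound follows at once.

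The one point requiring care --- and the main obstacle --- is the boundary-component identity $v(G^{\delta(A)})=f(G|_A)$; everything else is bookkeeping already supplied by Corollary~\ref{c.orbd} and Theorem~\ref{dual to medial}. I expect the map $A\mapsto G^{\delta(A)}$ to be genuinely many-to-one in general, since distinct spanning quasi-trees can yield the same blossom up to homeomorphism; this is precisely why the statement is an inequality rather than an equality, and it means no injectivity needs to be established.
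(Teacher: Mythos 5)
Your proposal is correct and follows essentially the same route as the paper: the paper's proof likewise rests on the identity from \cite{Mo4} that $v(G^{\delta(A)})$ equals the number of boundary components of the spanning ribbon subgraph on edge set $A$, concludes that $G^{\delta(A)}$ is a blossom if and only if that subgraph is a spanning quasi-tree, and obtains the inequality by noting the partial duals need not be distinct. Your extra remarks (the internal derivation via duality states and the sanity checks at $A=\emptyset$ and $A=E(G)$) are consistent elaborations rather than a different argument.
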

\begin{proof}
From \cite{Mo4}, if $A\subseteq E(G)$, and $A^c=E(G)\backslash A$, then the number of vertices of $G^{\delta(A)}$ is equal to the number of boundary components of $G\backslash A^c$. It then follows that $G^{\delta(A)}$ is a bouquet if and only if $G\backslash A^c$ is a spanning quasi-tree. The result then follows, noting that the partial duals need not be distinct.
\end{proof}

Similar results hold for the orbit of an embedded graph under the action of the subgroup of the ribbon group generated by half-twists. 
\begin{proposition}
Let $G$ be an embedded graph. Then
\begin{enumerate}
\item   $|Orb_{( \tau )}(G)|$ is bounded above by the number of cycles in $G$.
\item If $G$ is bipartite, then $G^{\tau (E(G))}=G$.  Furthermore, (Wilson \cite{Wil79}) if $G$ is orientable regular map, then $G^{\tau (E(G))}=G$ if and only is $G$ is bipartite.
\end{enumerate}
\end{proposition}

\begin{proof}
We will work in the language of ribbon graphs. Both results follow from the observations that $\tau (e)$  changes the ribbon graph by adding a half-twist to the  edge $e$, so up to the equivalence of ribbon graphs, $\tau (A)$ can only act by adding or removing half-twists to cycles, for each $A\subseteq E(G)$. The first result then follows by observing that $\tau (A)$ can only act by changing the orientability of a set of cycles of $G$.

That all embeddings of bipartite graphs are self-Petrial follows because every cycle is even, so its orientability is unchanged if every edge is given a half-twist, while the cyclic order of half-edges incident to each vertex is unchanged. The `if and only if' statement is due to Wilson \cite{Wil79}, and does not in general hold for unorientable embedded graphs. For example, the hemi-dodecahedron in the projective plane is self-Petrial, but not bipartite.

\end{proof}

\subsection{Cycle family graphs and checkerboard colourings}\label{ss.cfgcb}

Here we define an action of the ribbon group ${\fG}^n$ on an arrow marked vertex state of an embedded medial graph $G_m$.  Obviously, $G_m$ could equivalently be any $4$-regular checkerboard colourable embedded graph, but we will typically apply these results to medial graphs. In order to do this we need to be able to distinguish among the three vertex states. If we canonically checkerboard colour $G_m$, then we can distinguish among the vertex states at $v$ as in the following figure.  Here, following knot theory conventions, the graphs are identical outside these local neighbourhoods. 

\begin{center}\begin{tabular}{ccccccc}
\includegraphics[height=15mm]{m1v} & \quad \raisebox{6mm}{$\longrightarrow$} \quad \hspace{1cm} & \includegraphics[height=15mm]{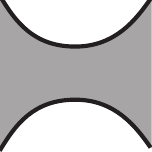} &\hspace{1cm}  & \includegraphics[height=15mm]{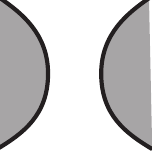} &\hspace{1cm}  &\includegraphics[height=15mm]{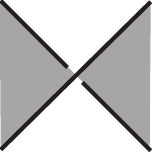} \\
in $G_m$ && white split && black split && crossing.
\end{tabular}\end{center}

We denote the graphs that result from each of these vertex states as follows:

\begin{enumerate}\renewcommand{\labelenumi}{}
  \item $(G_m)_{wh(v)}$ is the embedded graph that results from taking the white split state at $v$,
  \item $(G_m)_{bl(v)}$ is the embedded graph that results from taking the black split state at $v$, and
  \item $(G_m)_{cr(v)}$ is the embedded graph that results from taking the crossing state at $v$.
\end{enumerate} 

This kind of operation at a vertex appears in the literature under several names, including `smoothing' and `transition'.  We use the terminology `split' here to emphasize that the operation `splits', or separates, the regions about a vertex.

We now define an action of the group ${\fG}^n$ on an arrow marked  vertex state.
Let $(G,\ell)\in \calG_{or(n)}$. Then the order $\ell$ of the edges of $G$ induces an order $\hat{\ell} = (v_1, \ldots, v_n)$ of the vertex set of the embedded medial graph $G_m$. We let $(G_m,\hat{\ell})$ denote this canonically checkerboard coloured medial graph equipped with the vertex order induced from $(G,\ell)$.
To define an action on the set of arrow marked states of $G_m$, we let $\vec{s}$ be an arrow marked graph state of $G_m$ and $(\vec{s}, i)$ be the vertex state at the $i^{th}$ vertex $v_i$ in the order $\hat{\ell}$.   
 Then  define $\tau(\vec{s},i)$ to be the pair $(\vs, i)$, where $\vs$ is obtained from $\vec{s}$  by reversing exactly one of the arrows of the arrow marked vertex state at $v_i$ in $\vec{s}$.

Let  $\delta (\vec{s},i)$ be the pair $(\vs,i)$ where $\vs$ is constructed by changing the arrow marked vertex state at $v_i$ in $\vec{s}$ as specified by the following figure:
    \[ \includegraphics[height=15mm]{n1} \includegraphics[height=15mm]{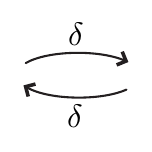} \includegraphics[height=15mm]{n2}\;,\hspace{2cm}
    \includegraphics[height=15mm]{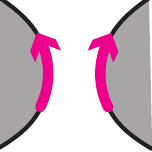} \includegraphics[height=15mm]{dd} \includegraphics[height=15mm]{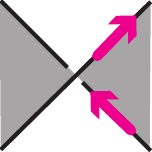}\]
\[\includegraphics[height=15mm]{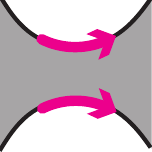} \includegraphics[height=15mm]{dd} \includegraphics[height=15mm]{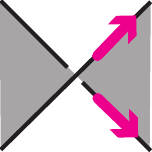}\;\;\raisebox{7mm}{.}\]

Next we want to extend this action to the set of states of $G_m$. 
Let $\mathcal{C}_{or}(G_m,\hat{\ell})$ denote the set of arrow marked graph states of $(G_m,\hat{\ell})$ with the vertex ordering induced by $\ell$.  
(We note that $\mathcal{C}_{or}(G_m,\hat{\ell})$  is equivalent to the set of cycle family graphs of $G_m$ with an edge order induced by $\ell$.) 
Define an action of $\fG^n$ on $\mathcal{C}_{or}(G_m,\hat{\ell})$ by
\[\vg(\vec{s}, \hat{\ell}):= ( \g_1, \g_2,\g_3, \ldots , \g_n )(\vec{s}, \hat{\ell}) = \g_n(  \cdots    \g_3(\g_2( \g_1(\vec{s},v_1), v_2 ),v_3)  \cdots ),v_n)  ,\]
where  $( \g_1, \g_2,\g_3, \ldots , \g_n )\in  \fG^{n}$ and $\vec{s} \in \mathcal{C}_{or}(G_m,\hat{\ell})$.

\begin{proposition}\label{t.medaction}
The action of $\fG^n$ on $\mathcal{C}_{or}(G_m,\ell)$ described above is a group action.
Moreover,  
\[ (G_m, \hat{\ell})_{\vg (\vec{s}, \hat\ell)} = \vg((G_m, \hat{\ell})_{\vec{s}}),\]
where $\vg\in \fG^n$.
\end{proposition}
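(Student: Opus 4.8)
The plan is to first verify that the prescription in the statement genuinely defines a group action, and then to establish the equivariance (intertwining) relation by reducing it to a single generator acting at a single vertex.

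For the group action, observe that each of the maps $\tau(\,\cdot\,,i)$ and $\delta(\,\cdot\,,i)$ alters only the arrow marked vertex state at the single vertex $v_i$ and leaves the states at all other vertices untouched. Consequently, operations carried out at distinct vertices act on disjoint local data and therefore commute, exactly as in Proposition~\ref{switch}; this is precisely what is needed for the formula in the definition to yield a well-defined action of $\fG^n$ built from the single-vertex actions (compare Definition~\ref{ribbon group}). It thus remains only to check, for a fixed vertex $v_i$, that the relations $\tau^2=\delta^2=(\tau\delta)^3=1$ of the presentation $\fG\cong\langle \delta,\tau\mid \delta^2,\tau^2,(\tau\delta)^3\rangle$ hold when $\tau$ and $\delta$ act on a single arrow marked vertex state. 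Each of these is a direct diagram chase through the figures defining $\tau(\vec{s},i)$ and $\delta(\vec{s},i)$: reversing one arrow twice restores the original state, the $\delta$-move is an involution on each of the three vertex-state types, and the order-six computation follows the same sequence of configurations as in Lemma~\ref{l.action1}.

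For the equivariance relation $(G_m,\hat{\ell})_{\vg(\vec{s})}=\vg((G_m,\hat{\ell})_{\vec{s}})$, I would reduce to generators. Since the left-hand side is built from the group action just established and the right-hand side is the ribbon group action on embedded graphs (Definition~\ref{ribbon group}), and since both actions apply their generators vertex-by-vertex with distinct vertices commuting, it suffices to prove $(G_m)_{(\g,i)(\vec{s})}=(\g,i)\bigl((G_m)_{\vec{s}}\bigr)$ for each generator $\g\in\{\tau,\delta\}$ and each $i$; the general statement then follows by composing along a word and inducting on its length. The crucial input is the natural identification supplied by the cycle family graph construction: the two $v_i$-coloured arrows of the vertex state at $v_i$ become exactly the pair of $e_{v_i}$-labelled marking arrows of the edge $e_{v_i}$ in the arrow presentation of $(G_m)_{\vec{s}}$, the same identification of $\V(F)$ with $E(F_{\vec{s}})$ used in Theorem~\ref{t.radmed1}.

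Under this identification the generator case becomes a comparison of figures. For $\tau$, the definition of $\tau(\vec{s},i)$ -- reverse exactly one arrow of the vertex state -- is, verbatim, the definition of $\tau$ acting on the edge $e_{v_i}$ in Definition~\ref{def.ops}, so $(G_m)_{\tau(\vec{s},i)}=\tau\bigl((G_m)_{\vec{s}},e_{v_i}\bigr)$ is immediate. For $\delta$, I would compare the figure defining $\delta(\vec{s},i)$ on vertex states with Figure~\ref{taudelta}: both redraw the two $v_i$-arrows as a segment from the head of one to the tail of the other and reorganize the incident arcs into the new circle structure in the identical way, so the resulting arrow presentations coincide and $(G_m)_{\delta(\vec{s},i)}=\delta\bigl((G_m)_{\vec{s}},e_{v_i}\bigr)$. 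The main obstacle is precisely this $\delta$ case, since $\delta$ simultaneously changes the vertex-state type and the way the cycles of $\vec{s}$ are routed; the content of the proof is to confirm that the figure defining $\delta$ on vertex states was set up exactly so that this square commutes, that is, so that modifying the state locally produces the same arrow presentation as dualizing the corresponding edge of the already-formed cycle family graph.
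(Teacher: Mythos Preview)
Your proposal is correct and follows essentially the same approach as the paper. The paper's own proof is considerably terser: for the first part it simply asserts that the relations $\tau^2(\vec{s},v)=\delta^2(\vec{s},v)=(\tau\delta)^3(\vec{s},v)=1$ are ``easily verified'', and for the second part it says the identity is ``tautological when the arrowed states are viewed as arrow presentations of embedded graphs''; your identification of the $v_i$-coloured arrows of the vertex state with the $e_{v_i}$-labelled marking arrows in the arrow presentation of $(G_m)_{\vec{s}}$, and your generator-by-generator comparison, are precisely the unpacking of that tautology.
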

\begin{proof}
To prove the first part of the proposition we need to check that $ \delta^2(\vec{s},v)= \tau^2(\vec{s},v)= (\tau\delta)^3(\vec{s},v)=1$ and this is easily verified. By checking examples, it is readily verified that no additional relations hold.  The second part of the proposition is tautological when the arrowed states are viewed as arrow presentations of embedded graphs.

\end{proof}

Just as the action of $\fG^n$ on embedded graphs may be interpreted geometrically,  Proposition~\ref{t.medaction} may be given from a geometric perspective as in the following proposition. The proposition below states that the group action on arrow marked medial graphs defined in this subsection is compatible with the ribbon group action.
\begin{proposition}\label{t.medaction geo}
Let $G$ be an embedded graph with embedded medial graph $G_m$, and index the vertices of $G_m$ by $E(G)$.   Let $\Gamma = \prod^6_{i=1}{\g_i(A_i)}$ where the $A_i$'s partition $E(G)$, and the $\g_i$'s are the six elements of $\fG$. If $\vec{s}$ is an arrow marked state of $G_m$, then let $\vG$ be the result of applying $\g_i (\vec{s},v_e)$ if $e \in A_i$.  Then
\[
(G_m)_{\vG} = ((G_m)_{\vec{s}})^{\Gamma}.
\]
\end{proposition}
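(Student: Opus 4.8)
The plan is to reduce the global statement to a purely local one and then match up the two sides vertex by vertex. The key observation is that the assignment $\g_i(\vec{s},v_e)$ acting on an arrow marked state exactly mirrors, at the level of arrow presentations, the action of $\g_i$ on the edge $e$ of the embedded graph $(G_m)_{\vec{s}}$. Indeed, the second part of Proposition~\ref{t.medaction} already records precisely this fact, namely $(G_m,\hat{\ell})_{\vg(\vec{s})} = \vg((G_m,\hat{\ell})_{\vec{s}})$, and the present proposition is simply its ordering-independent, geometric restatement using the notation $\Gamma = \prod^6_{i=1}\g_i(A_i)$. So the bulk of the work is to see that the unordered expression $\vG$ and the twisted dual exponent $\Gamma$ describe the same composite operation.

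First I would fix an arbitrary edge ordering $\ell$ of $G$, inducing the vertex ordering $\hat\ell = (v_1,\dots,v_n)$ of $G_m$ under the natural identification of $E(G)$ with $\V(G_m)$. Under this identification, the operator $\g_i(\vec{s},v_e)$ applied for $e\in A_i$ assembles (by Proposition~\ref{switch}, since operations at distinct vertices commute) into a single element $\vg = \vg_\Gamma \in \fG^n$ acting on $(\vec{s},\hat\ell)$; this is exactly the element encoded by the partition-indexed product $\Gamma$. Then I would invoke the second clause of Proposition~\ref{t.medaction}, which gives $(G_m,\hat\ell)_{\vg(\vec{s})} = \vg((G_m,\hat\ell)_{\vec{s}})$. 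The left side is $(G_m)_{\vG}$ by construction of $\vG$, and the right side is $\vg((G_m)_{\vec{s}}) = ((G_m)_{\vec{s}})^{\Gamma}$ by Definition~\ref{unordered twistdual} and Definition~\ref{geo twistdual} of the twisted dual. This yields the claimed identity, and Proposition~\ref{permute} guarantees that the result is independent of the chosen ordering $\ell$, so the statement holds for the unordered graph.

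The main obstacle, and the step deserving the most care, is making the identification $\vG \leftrightarrow \Gamma$ genuinely precise rather than merely plausible. One must check that applying $\g_i(\vec{s},v_e)$ for each $e\in A_i$ really does realize the same group element as the twisted dual exponent $\prod^6_{i=1}\g_i(A_i)$, and in particular that the local vertex-state modifications defined by the figures for $\tau(\vec{s},i)$ and $\delta(\vec{s},i)$ agree, as arrow presentations, with the effect of the edge operations $\tau$ and $\delta$ from Definition~\ref{def.ops}. This is the content already secured by Proposition~\ref{t.medaction}, whose proof notes this correspondence is tautological once arrowed states are read as arrow presentations; here it suffices to observe that $\vG$ is nothing but the unordered repackaging of the ordered action, precisely as $((G_m)_{\vec{s}})^{\Gamma}$ is the unordered repackaging of $\vg((G_m)_{\vec{s}})$. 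Thus the proposition is the geometric translation of Proposition~\ref{t.medaction}, and the proof is essentially a bookkeeping argument combining that result with the definitions of $\Gamma$ and $\vG$.
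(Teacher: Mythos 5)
Your proposal is correct and follows essentially the same route as the paper, which offers no separate argument for this proposition but presents it explicitly as the geometric, order-independent restatement of Proposition~\ref{t.medaction}, whose second clause is noted there to be tautological once arrowed states are read as arrow presentations. Your additional bookkeeping via Propositions~\ref{switch} and~\ref{permute} to pass between the ordered action and the partition-indexed product $\Gamma$ is exactly the translation the paper leaves implicit.
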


\subsection{Orbits under the action induced by the subgroups of $\fG$}\label{ss.orbits}

We have given, in Corollary~\ref{cycle to medial},  a characterization of the orbit of an embedded graph $G$ under the entire ribbon group action in terms of medial graphs.  We have also given, in  Theorem~\ref{c.orbd}, a characterization of the orbit of $G$ under the action of the subgroup generated by all elements of the form $(1, \ldots, 1, \delta, 1, \ldots ,1)$,  again in terms of medial graphs.  Here we will provide  geometric characterizations of the orbits under the action of some other important subgroups of the  ribbon group. These characterizations are implicit in Proposition~\ref{t.medaction geo}. A full study of actions of  the many subgroups of a ribbon group on $n$ edges for some class of graphs opens a new and interesting area  investigation.

Recall from Subsection~\ref{ss.notable} that $Orb_{( \delta )}(G)$ is the set of partial duals of $G$, and
 $Orb_{( \tau )}(G)$ is the set of partial Petrie duals of $G$, with similar definitions for the other subgroups of $\fG$.

In this section, we characterize geometrically the orbits $Orb_{( \g )}(G)$ generated by each subgroup of $\fG$ in terms of medial graphs.  In order to do this we need to introduce some further notions of the equivalence of  checkerboard coloured medial graphs. 

Let $F$ be a checkerboard coloured $4$-regular graph. Type $C1$, $C2$, $C3a$, $C3b$ and $C4$ moves are defined in Figure~\ref{fig.cmoves}. In this figure, a $Ci$ move replaces a checkerboard coloured $4$-valent vertex of $F$ with  the checkerboard coloured $4$-valent vertex shown  labelled $Ci$  in the table, where $Ci$ is $C1$, $C2$, $C3a$, $C3b$ or $C4$. (Note that, due to the non-trivial embeddings of the graphs, in the figure the checkerboard colouring is indicated using bold (for the black face) and lighter (for the white face) lines for the boundary components.)
\begin{figure}
\begin{center}
\begin{tabular}{ccccc}
\includegraphics[height=2cm]{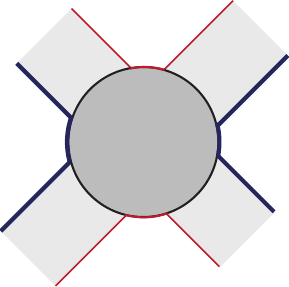} &\hspace{1cm} &\includegraphics[height=2cm]{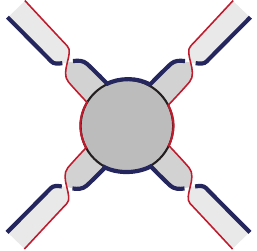}&\hspace{1cm} &\includegraphics[height=2cm]{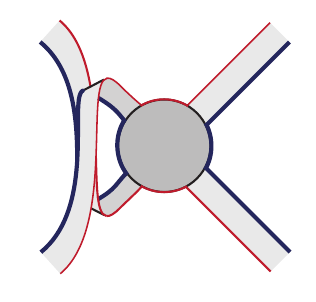} \\
A vertex of  $F$  && Type C1 move && Type C2 move \\ &&\\
\includegraphics[height=2cm]{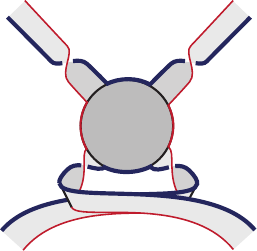} 
& &\includegraphics[height=2cm]{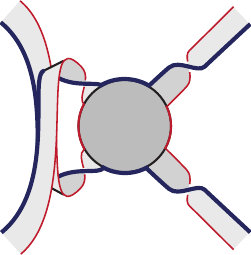} && \includegraphics[height=2cm]{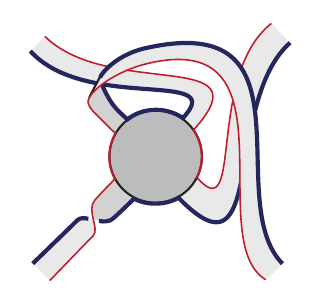} \\
Type C3a move  & & Type C3b move   && Type C4 move 
\end{tabular}
\end{center}
\caption{Moves on a checkerboard coloured embedded graph.}
\label{fig.cmoves}
\end{figure}
We say that two checkerboard coloured $4$-regular graphs $F$ and $H$ are $C1$-equivalent, written $F\sim_{C1} H$, if $F$ and $H$ are related by a finite sequence of $C1$ moves; and similarly for the $C2$ and $C4$ moves. 
We say that $F$ and $H$ are $C3$-equivalent, written $F\sim_{C3} H$, if $F$ and $H$ are related by a finite sequence of $C3a$ and $C3b$ moves.
     It is easy to verify that $Ci$-equivalence, for  $i=1,2,3,4$, is an equivalence relation.

\begin{theorem}\label{orbits}
Let $G$ be an embedded graph. Then
\begin{enumerate}
\item  $Orb(G) = \{H:H_m \cong G_m\}$;
\item $Orb_{( \delta )}(G) 
= \{H \; |\; G_m\sim_{C1} H_m \text{ w.r.t canonical checkerboard colouring}\} $;

$ \quad \quad  \quad \quad = \{H \; |\; G_m \text{ and } H_m \text{ are equivalent as locally embedded maps}\}$; 

\item $Orb_{( \tau )}(G) = \{H \; |\; G_m\sim_{C2} H_m \text{ w.r.t canonical checkerboard colouring }\} $;
\item $Orb_{( \delta\tau )}(G) = \{H \; |\; G_m\sim_{C3} H_m \text{ w.r.t canonical checkerboard colouring}\} $;
\item $Orb_{( \delta\tau\delta )}(G) = \{H \; |\; G_m\sim_{C4} H_m \text{ w.r.t canonical checkerboard colouring}\} $.

\end{enumerate}

\end{theorem}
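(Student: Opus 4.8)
The plan is to reduce the whole theorem to a single local computation at one vertex of the medial graph, and then read off the five parts from it. Part (i) requires no new work, since it is precisely Corollary~\ref{c.orbg}.

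For the remaining parts the key is a \emph{local dictionary} refining Theorem~\ref{iso to F}. Fix the canonical checkerboard colouring on $G_m$, a single edge $e$ of $G$, and the associated medial vertex $v_e\in\V(G_m)$. I would first prove that $(G^{\g(e)})_m$ is obtained from $G_m$ by performing, at $v_e$, the move $C1$ when $\g=\delta$, the move $C2$ when $\g=\tau$, the move $C3a$ when $\g=\delta\tau$ and the move $C3b$ when $\g=\tau\delta$ (up to interchanging the labels $C3a,C3b$), and the move $C4$ when $\g=\delta\tau\delta$ (and no move when $\g=1$). The proof is the same local analysis used in Theorem~\ref{iso to F}: one draws at $v_e$ each of the six arrow marked vertex states exhibited in the proof of Theorem~\ref{t.radmed1}, forms the medial graph locally for each, and compares the resulting checkerboard coloured $4$-valent vertex with the pictures in Figure~\ref{fig.cmoves}. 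Proposition~\ref{t.medaction geo} is what legitimises this: taking $\vec{s}=\vec{b}$ so that $(G_m)_{\vec{b}}=G$, it yields $(G_m)_{\g(\vec{b},v_e)}=G^{\g(e)}$, whence $(G^{\g(e)})_m=((G_m)_{\g(\vec{b},v_e)})_m$, so that the effect of $\g$ on the medial graph is read off from the arrow marked vertex state exactly as above.

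Granting the dictionary, each of (ii)--(v) follows by a two-sided inclusion. Because operations on distinct edges commute (Proposition~\ref{switch}) and moves at distinct vertices of $G_m$ are independent, if $H\in Orb_{\langle\g\rangle}(G)$ then $H=G^{\g(A)}$ for a suitable edge-by-edge assignment of the generator (for the order-three subgroup $\langle\delta\tau\rangle=\{1,\delta\tau,\tau\delta\}$ the two nontrivial elements may be assigned independently at the edges of $A$), so $G_m$ and $H_m$ differ by the corresponding $Ci$ moves at the vertices indexed by $A$ and hence $G_m\sim_{Ci}H_m$. Conversely, a sequence of $Ci$ moves carrying $G_m$ to $H_m$ is read backwards through the dictionary: each move is the medial image of one generator applied at one edge, so $H=G^{\g(A)}\in Orb_{\langle\g\rangle}(G)$. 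Reversibility is automatic because $\g$ is invertible in $\fG$: for the three order-two subgroups each of $C1$, $C2$, $C4$ is its own inverse, while $C3a$ and $C3b$ are mutually inverse, matching $(\delta\tau)^{-1}=\tau\delta$. Finally, the second equality in (ii)---that $C1$-equivalence coincides with equivalence as combinatorial maps---is already supplied by Corollary~\ref{c.orbd}, so nothing further is needed there.

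The main obstacle is the local verification underpinning the dictionary. One must run through all six vertex states while tracking two features that the schematic figures tend to obscure: the canonical checkerboard colouring, which is exactly what distinguishes $C3a$ from $C3b$ and pins down the colour roles in each picture, and the twisting and re-embedding of the ribbon edges produced whenever $\g$ involves $\tau$, which is why several of the moves land on non-trivially embedded $4$-valent vertices rather than planar ones. Once the six local pictures are correctly drawn and matched against Figure~\ref{fig.cmoves}, the rest is bookkeeping.
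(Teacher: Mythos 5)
Your proposal is correct and follows essentially the same route as the paper: parts (i) and (ii) are dispatched by Corollaries~\ref{c.orbg} and~\ref{c.orbd}, and the remaining parts rest on Proposition~\ref{t.medaction geo} together with $G=(G_m)_{\vec{b}}$ from Proposition~\ref{p.cyrad}, identifying each $Ci$ move as the geometric realization of the corresponding generator's action at a single medial vertex. You merely spell out the two-sided inclusions and the $C3a$/$C3b$ bookkeeping for the order-three subgroup more explicitly than the paper, which states these steps as following ``easily.''
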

\begin{proof}
First note than  the $C1$ move is the geometric realization of $(G_m)_{(\vec{b})^{\g (e)}}$ when $\g=\delta$, the $C2$ move is the geometric realization of $(G_m)_{(\vec{b})^{\g (e)}}$ when $\g=\tau$, the $C3a$ when $\g=\tau\delta$, the $C3b$  when $\g=\delta\tau$, and  the $C4$  when $\g=\delta\tau\delta$.
The first result is then  Theorem~\ref{c.orbg} and the second is Theorem~\ref{c.orbd}. 

The remaining results follow easily from Proposition~\ref{t.medaction geo}. To see this we begin by observing that, by  Proposition~\ref{p.cyrad}, $G=(G_m)_{\vec{b}}$. Then if $\g\in \fG$, and $e\in E(G)$ we have, by Proposition~\ref{t.medaction geo}, 
\[ G^{\g (e)} = \left((G_m)_{\vec{b}}\right)^{\g (e)}   = (G_m)_{(\vec{b})^{\g (e)}}.  \]

\end{proof}

The following theorem summarizes how the notions of equivalence of embedded graphs and the notions of duality studied here generate each other.
\begin{theorem}
Let $G$ and $H$ be  embedded graphs. Then there is the following hierarchy of of graph equivalences and notions of duality: 
\begin{enumerate}
\item $G_m$ and $H_m$ are equivalent as embedded graphs if and only if $G$ and $H$ are geometric duals;
\item  $G_m$ and $H_m$ are equivalent as locally embedded maps (or are partial Petrials) if and only if $G$ and $H$ are partial duals;
\item $G_m$ and $H_m$ are equivalent as abstract graphs if and only if $G$ and $H$ are twisted duals.
\end{enumerate}
Furthermore, 
\begin{enumerate}\setcounter{enumi}{3}
\item $G_m$ and $H_m$ are equivalent under $C_3$ moves if and only if $G$ and $H$ are partial trialities;
\item  $G_m$ and $H_m$ are equivalent under $C_4$ moves if and only if $G$ and $H$ are partial Wilsonials.
\end{enumerate}

\end{theorem}

\bigskip



\subsection{Deletion, contraction, and medial graphs}\label{del con medial}

We now discuss how the operations of deletion and contraction interact with the formation of  medial graphs.  Deletion and contraction of non-loop edges of an embedded graph are defined much as for abstract graphs. Let $G$ be an embedded graph. In the language of ribbon graphs it is clear that deleting any edge of $G$ or contracting a non-loop edge will result in a ribbon graph.  When working in the language of cellularly embedded graphs, one has to ensure that deleting or contracting an edge results in a cellularly embedded graph.  In particular, deleting a bridge changes not only the number of components of a graph, but the number of components of the surface it is embedded in.
 Thus deletion and contraction is often best  done by converting to the language of ribbon graphs, carrying out the operation, then converting back to the language of cellularly embedded graphs. Deletion and contraction for arrow presentations can be defined similarly.  Note that $G/e$ and $G^{\delta(e)}-e$ are equivalent when $e$ is not a loop.

Contracting loops requires some care.  We follow Bollob\'{a}s  
and Riordan's definition from Section~7 of \cite{BR2}.
Let $G$ be an embedded graph regarded as a ribbon graph and suppose that  $e$ is a loop of $G$, with $v$ being the vertex of $G$ incident to $e$. Then the ribbon  subgraph $(\{v\}, \{e\})$ of $G$ has either one or two boundary  
components (depending on whether $e$ is a twisted loop or not). Then the ribbon graph $G/e$ is formed from $G$ by attaching a vertex-disc to each of the boundary components of $v\cup e$ in $G$, then  
deleting $e$ and $v$.
 From this definition, it is not hard to see that when $e$ is a loop  
it is also true that the ribbon graphs $G/e$ and $G^{\delta(e)}-e$ are equivalent. Chmutov observed this  in \cite{Ch1}  
and defined the {\em contraction} of any edge $e$ of an  
embedded graph $G$ as
\[   G/e := G^{\delta(e)}-e ,\]
and we follow this convention.
We emphasize the fact that if $e$ is a non-twisted loop whose ends are adjacent to each other on the vertex on which they lie, then contraction creates a vertex. For example, if $G$ is the orientable embedded graph consisting of one vertex and  one edge $e$, then $G/e$ consists of two vertices and no edges.  We also recall that the medial graph of an isolated vertex is again an isolated vertex.

The following proposition  may be readily observed by viewing $G$ and $G_m$ as ribbon graphs as in Figure~\ref{fig.meddef},  and noting how the medial graph is transformed under the indicated operations. 

\begin{proposition}\label{medial deletions}

Let $G$ be an embedded graph with embedded, canonically checkerboard coloured medial graph $G_m$, and let  $e$ be any edge of $G$. Then
\begin{enumerate}
\item  $(G_m)_{bl(v_e)} = (G-e)_m$;
\item $(G_m)_{wh(v_e)}=(G/e)_m$;
\item $(G_m)_{cr(v_e)}$  and  $(G^{\tau(e)}/e)_m$ are twists of each other.

\end{enumerate}

\end{proposition}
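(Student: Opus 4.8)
The plan is to verify all three identities by a purely local computation with ribbon graphs, in the spirit already suggested by Figure~\ref{fig.meddef}. Recall that $G_m$ is drawn inside $G$ so that each edge $e$ of $G$ carries exactly one $4$-valent vertex $v_e$ of $G_m$, whose four incident medial edges run alongside $e$ following the face boundaries of $G$. Each of the operations $G\mapsto G-e$, $G\mapsto G/e$ and $G\mapsto G^{\tau(e)}/e$ changes $G$ only inside a neighbourhood of $e$, and each of the three vertex states changes $G_m$ only inside a neighbourhood of $v_e$; everywhere else the graphs in sight are identical. Hence it suffices to match the local pictures. I would fix the canonical checkerboard colouring of $G_m$, under which the two ends of the edge-ribbon $e$ (the vertices of $G$ incident with $e$) lie in black faces and the two sides of $e$ (the faces of $G$ incident with $e$) lie in white faces.

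For the first identity, deleting $e$ removes the edge-ribbon and merges the two white faces meeting along $e$ while keeping the two black regions apart; in $G_m$ this is realised by reconnecting the four medial edges at $v_e$ through the split whose arcs follow the black boundaries, which is exactly the black split of Subsection~\ref{ss.cfgcb}. This gives $(G-e)_m=(G_m)_{bl(v_e)}$. For the second identity I would use $G/e=G^{\delta(e)}-e$: for a non-loop $e$, contraction merges the two end-vertices, hence the two black faces, and the matching reconnection of the medial edges is the white split, so $(G/e)_m=(G_m)_{wh(v_e)}$. The loop case is covered by the same local picture together with the Bollob\'as--Riordan definition of contraction recalled above, where one checks that the white split produces precisely the boundary components that are capped off to form the vertices of $G/e$.

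The third identity is the delicate one. The crossing state reconnects the four medial edges at $v_e$ by the one remaining, non-planar, pairing, so that each medial strand runs straight through $v_e$. Writing $G^{\tau(e)}/e=G^{\delta\tau(e)}-e$ and drawing the local effect of inserting a half-twist in $e$ and then contracting, I would check that $(G^{\tau(e)}/e)_m$ exhibits exactly this straight-through reconnection of the medial edges. The reason equality is replaced by twist-equivalence is the same twisting-uncertainty met in the proof of Theorem~\ref{iso to F}: the local data of the crossing fixes how the medial strands are joined but not how many half-twists they carry, so the edges of $(G_m)_{cr(v_e)}$ and of $(G^{\tau(e)}/e)_m$ may differ by half-twists. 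As adding or removing half-twists on edges is exactly the action of $\fG_{tw}$, the two embedded graphs are twists of one another in the sense of Subsection~\ref{ss.notable}. I expect this last bookkeeping --- tracking how the half-twist introduced by $\tau(e)$ and the subsequent contraction redistribute as twists along the incident medial edges, and confirming these can always be absorbed by $\fG_{tw}$ but not in general removed --- to be the main obstacle; the first two identities are the classical plane facts transcribed verbatim to ribbon graphs, needing care only in the loop case of the second.
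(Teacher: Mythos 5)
Your proposal is correct and takes essentially the same approach as the paper, which offers only the one-line justification that the proposition ``may be readily observed by viewing $G$ and $G_m$ as ribbon graphs as in Figure~\ref{fig.meddef}, noting how the medial graph is transformed under the indicated operations.'' Your local matching of the black split with deletion, the white split with contraction (via $G/e=G^{\delta(e)}-e$ for the loop case), and the crossing with $G^{\tau(e)}/e$ up to half-twists is exactly the verification the paper leaves to the reader.
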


\section{ The transition polynomial }\label{sec:transpoly}
The generalized transition polynomial, $q(G; W,t)$,  of \cite{E-MS02} is a multivariate graph polynomial that generalizes Jaeger's transition polynomial~\cite{Jae90}. The transition polynomial assimilates the Penrose polynomial and Kauffman bracket, and agrees with the Tutte polynomial via a medial graph construction.  We will now adapt $q(G; W,t)$ to embedded graphs and determine its interaction with the ribbon group action. 
In \cite{E-MMc}, we use the interaction between twisted duality and the transition polynomial   
to generalize the Penrose polynomial for plane graphs to embedded graphs and determine new properties for it. Also, in  \cite{E-MMd}, by leveraging the relation determined in~\cite{ES} between the generalized transition polynomial and the topological Tutte polynomial of Bollob\'as and Riordan (\cite{BR1, BR2}), we determine new properties of the latter.

\subsection{The topological transition polynomial}
The generalized transition polynomial, $q(G; W,t)$  extends  the  transition polynomial of Jaeger~\cite{Ja90} to arbitrary Eulerian graphs and incorporates pair and vertex state weights. For the current application, however, we will restrict $q$ to $4$-regular embedded graphs (typically medial graphs) and we will only work in the generality needed for our current application. For example, since we will not use pair weights here, we give the weight systems simply in terms of vertex state weights. If an applications arises in the future where the pair weights are needed, because of the restriction to $4$-regular graphs, they may be taken to be square roots of the vertex state weights.  We refer the reader to \cite{E-MS02} or \cite{ES} for further details.

A \emph{weight system}, $W( F )$, of
    any $4$-regular graph $F$ (embedded or not) is an assignment of a weight in a unitary ring $\mathcal{R}$ to every vertex state of $F$.
     (We simply write $W$ for $W(F)$ when the graph is clear from context.)  If $s$ is a state of $F$, then the \emph{state weight} of $s$ is $\omega(s) := \prod_{v \in V(F)} {\omega(v,s)}$, where $\omega(v,s)$ is the vertex state weight of the vertex state at $v$ in the graphs state $s$. Note that a state $s$ consists of a set of disjoint closed curves, and we refer to these as the components of the state, denoting the number of them by $c(s)$.

\begin{definition}\label{transpolydef} Let $F$ be a $4$-regular graph having weight system $W$ with values in a unitary ring $\mathcal{R}$.  Then the state model formulation of the {\em generalized transition polynomial} is 
\[
q(F; W,t)= \sum_{s} {\omega( s )t^{c(s)} },
\]  
where the sum  is over all graph states $s$ of $F$.  
\end{definition}  

We now restrict our attention further to embedded medial graphs and particular weight systems determined by the embeddings. Because of these restrictions, we will call the generalized transition polynomial specialized for this application the \emph{topological transition polynomial}, and define it as follows.

\begin{definition}
 Let $G$ be an embedded graph with embedded medial graph $G_m$.  Define the {\em medial weight system}, $W_m(G_m)$, using the canonical checkerboard colouring of $G_m$ as follows.  A vertex $v$ has state weights given by an ordered triple $(\alpha_v$, $\beta_v$, $\gamma_v)$, indicating the weights of the white split, black split, and crossing state, in that order. We write $(\boldsymbol\alpha, \boldsymbol\beta, \boldsymbol\gamma)$ for the set of these ordered triples, indexed equivalently either by the vertices of $G_m$ or by the edges of $G$.  Then the \emph{topological transition polynomial} of $G$ is:
\[
Q(G, (\boldsymbol\alpha, \boldsymbol\beta, \boldsymbol\gamma), t) :=q(G_m; W_m,t).
\] 
\end{definition}

\begin{proposition}\label{linear recursion Q}
The topological transition polynomial may be computed by repeatedly applying  the following linear recursion relation at each $v\in V(G_m)$, and, when there are no more vertices of degree 4 to apply it to, evaluating each of the resulting closed curves to an independent variable $t$: 
\[
q(G_m, W_m,t)= \alpha_v q((G_m)_{wh(v)}, W_m,t)+ \beta_v q((G_m)_{bl(v)}, W_m,t)+ \gamma_v q((G_m)_{cr(v)}, W_m,t).
\]

Pictorially, this is:
\[ \includegraphics[height=14mm]{m1v}\;\; \raisebox{7mm}{$=$} \;\; \raisebox{7mm}{$\alpha_v$} \; 
\includegraphics[height=14mm]{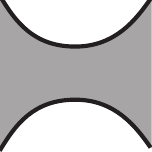}
\raisebox{7mm}{$\;+ \;\; \beta_v$} \;\; 
\includegraphics[height=14mm]{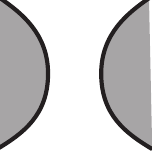}
\raisebox{7mm}{$\;+ \;\;\; \gamma_v$}\;\;\includegraphics[height=14mm]{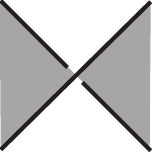}\raisebox{7mm}{.}
 \]

\end{proposition}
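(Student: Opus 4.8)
The plan is to prove the single-vertex recursion by splitting the state-sum that \emph{defines} $q(G_m;W_m,t)$ according to the transition chosen at a distinguished vertex, and then to match each of the three resulting partial sums with the transition polynomial of the correspondingly split graph; the termination clause then follows by evaluating the base case of a graph with no degree-$4$ vertices.

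First I would recall from Definition~\ref{transpolydef} that $q(G_m;W_m,t)=\sum_{s}\omega(s)\,t^{c(s)}$, summed over all graph states $s$ of $G_m$, with $\omega(s)=\prod_{w\in V(G_m)}\omega(w,s)$. Fix a vertex $v$ of $G_m$. Since a graph state is an \emph{independent} choice of vertex state at each vertex, the set of all states of $G_m$ partitions into three classes according as the state at $v$ is the white split, the black split, or the crossing; writing the sum over each class separately splits $q(G_m;W_m,t)$ into three sums. In the first class the weight $\omega(v,s)$ is exactly $\alpha_v$, so $\omega(s)=\alpha_v\prod_{w\neq v}\omega(w,s)$, and one may pull $\alpha_v$ out of that sum; the analogous factoring gives $\beta_v$ and $\gamma_v$ for the other two classes.

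Next I would identify each factored sum as a transition polynomial of a split graph. The graph $(G_m)_{wh(v)}$ is, by the definition in Subsection~\ref{ss.cfgcb}, precisely $G_m$ with the white split already performed at $v$, so its vertex set is $V(G_m)\setminus\{v\}$. The map sending a state $s$ of $G_m$ that uses the white split at $v$ to the state $s'$ of $(G_m)_{wh(v)}$ recording the identical choices at all remaining vertices is a bijection, and under it $\prod_{w\neq v}\omega(w,s)=\omega(s')$. The essential point is that a transition at $v$ is a purely local modification (the graphs agree outside a neighbourhood of $v$), so the family of closed curves traced by $s$ is literally the same family traced by $s'$; hence $c(s)=c(s')$. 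Summing therefore yields $\alpha_v\,q\big((G_m)_{wh(v)};W_m,t\big)$, and the same argument applied to the black-split and crossing classes produces the $\beta_v$ and $\gamma_v$ terms, establishing the displayed recursion at $v$.

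Finally I would treat the termination clause. Applying the recursion at each of the $v(G_m)$ vertices in turn rewrites $q(G_m;W_m,t)$ as an $\mathcal{R}$-linear combination, indexed by the graph states $s$, of transition polynomials of graphs with no degree-$4$ vertices. Such a graph is a disjoint union of closed curves; its only state is the empty choice, whose weight is the empty product $1$ and whose component count equals the number of curves, so its transition polynomial is $t$ raised to that number. This is exactly the instruction to evaluate each resulting closed curve to $t$, and reassembling the linear combination recovers $\sum_s\omega(s)\,t^{c(s)}$, confirming agreement with Definition~\ref{transpolydef}. The main obstacle is not one of difficulty but of care: verifying that the bijection preserves the component count $c$, which rests entirely on the locality of the transition at $v$, so that the global closed-curve structure away from $v$ is untouched while the curve structure through $v$ is fixed by the chosen transition.
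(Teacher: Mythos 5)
Your proof is correct. The paper itself supplies no argument for this proposition—it is quoted from \cite{ES2}—and your argument (partitioning the defining state sum by the transition chosen at the distinguished vertex, factoring out the vertex weight, matching each partial sum to the split graph via a component-preserving bijection justified by locality, and evaluating the degree-$4$-free base case as a power of $t$) is exactly the standard proof one would supply, with the one delicate point, the preservation of $c(s)$, correctly identified and justified.
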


\begin{example}
For example, if $G= \raisebox{-4mm}{\includegraphics[height=10mm]{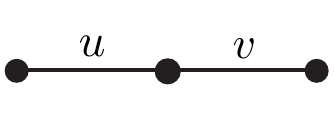}}$, 
then
$G_m=\raisebox{-4mm}{\includegraphics[height=10mm]{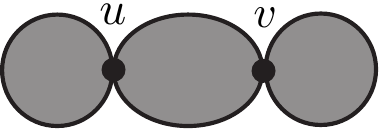}}$ and so
\[\begin{array}{rl}
Q(G; (\bal, \bbe,\bga) , t) =& \alpha_u \; \raisebox{-3mm}{\includegraphics[height=8mm]{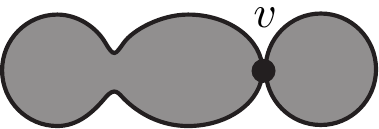}} + \beta_u \; \raisebox{-3mm}{\includegraphics[height=8mm]{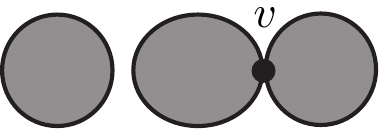}}+ \gamma_u \; \raisebox{-3mm}{\includegraphics[height=8mm]{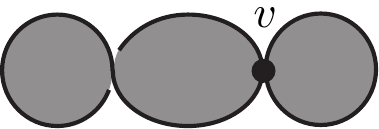}} \\ 
=& \alpha_u \; \left( \alpha_v \;\raisebox{-3mm}{\includegraphics[height=8mm]{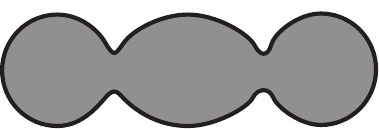}} + \beta_v \; \raisebox{-3mm}{\includegraphics[height=8mm]{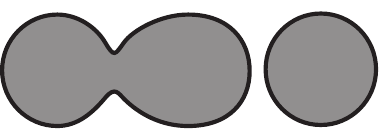}}+ \gamma_v \; \raisebox{-3mm}{\includegraphics[height=8mm]{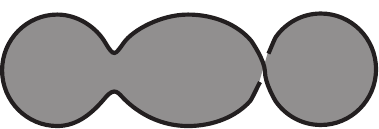}} \right) + \cdots \\
= & \alpha_u \alpha_v t + \alpha_u \beta_v t^2 + \alpha_u \gamma_vt + \cdots
\end{array}\]

\end{example}

We emphasize that in the following theorem the contraction of a loop $e$ (or any other edge $e$ for that matter) is defined by $G/e:=G^{\delta (e)}-e$.

\begin{theorem}\label{t.trans delete contract} Let $G$ be an embedded graph and $e \in E(G)$. Then 
\[
Q(G; (\boldsymbol\alpha, \boldsymbol\beta, \boldsymbol\gamma), t)=\alpha_e Q(G/e; (\boldsymbol\alpha, \boldsymbol\beta, \boldsymbol\gamma), t)+\beta_e Q(G-e; (\boldsymbol\alpha, \boldsymbol\beta, \boldsymbol\gamma), t)+\gamma_e Q(G^{\tau(e)}/e; (\boldsymbol\alpha, \boldsymbol\beta, \boldsymbol\gamma), t).
\]
\end{theorem}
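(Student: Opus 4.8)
The plan is to reduce the identity to the one-vertex linear recursion for the transition polynomial and then translate each of the three resulting $4$-regular graphs back into an operation on $G$ using Proposition~\ref{medial deletions}. Unwinding the definitions gives $Q(G;(\bal,\bbe,\bga),t)=q(G_m;W_m,t)$, so the starting point is to apply the recursion of Proposition~\ref{linear recursion Q} at the single vertex $v_e\in V(G_m)$ corresponding to $e$. This produces
\[
q(G_m;W_m,t)=\alpha_e\,q((G_m)_{wh(v_e)};W_m,t)+\beta_e\,q((G_m)_{bl(v_e)};W_m,t)+\gamma_e\,q((G_m)_{cr(v_e)};W_m,t),
\]
where each summand carries the weight system $W_m$ restricted to the surviving vertices, which are indexed by $E(G)\setminus\{e\}$.

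First I would dispatch the two split terms. By Proposition~\ref{medial deletions}(ii) and (i) we have $(G_m)_{wh(v_e)}=(G/e)_m$ and $(G_m)_{bl(v_e)}=(G-e)_m$ as embedded graphs. Because these are genuine equalities of embedded graphs, their canonical checkerboard colourings agree with the one inherited from $G_m$ away from $v_e$; hence the restricted $W_m$ is exactly the medial weight system of $G/e$ (respectively $G-e$), with the triple $(\alpha_f,\beta_f,\gamma_f)$ unchanged for each $f\neq e$. Re-folding the definition of $Q$ then identifies the first two terms as $\alpha_e\,Q(G/e;(\bal,\bbe,\bga),t)$ and $\beta_e\,Q(G-e;(\bal,\bbe,\bga),t)$.

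The crossing term is the crux. Proposition~\ref{medial deletions}(iii) gives only that $(G_m)_{cr(v_e)}$ and $(G^{\tau(e)}/e)_m$ are twists of one another, not that they are equal as embedded graphs, so I cannot simply re-fold the definition. Instead I would argue that the transition polynomial is unchanged under this twist, that is, $q((G_m)_{cr(v_e)};W_m,t)=q((G^{\tau(e)}/e)_m;W_m,t)=Q(G^{\tau(e)}/e;(\bal,\bbe,\bga),t)$. The reason to expect invariance is that $q(F;W,t)=\sum_s\omega(s)t^{c(s)}$ depends on $F$ only through its underlying abstract $4$-regular graph and the weight system: as remarked after the definition of a graph state, the family of disjoint cycles determined by a state, and hence its component count $c(s)$, is independent of the embedding. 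Since the two graphs share an underlying abstract graph, with vertices indexed identically by $E(G)\setminus\{e\}$, and their medial weight systems assign the same triple to each vertex, the polynomials will agree provided the twist also matches up the white/black/crossing classification of the three transitions at every vertex.

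This last point is where I expect the real work to lie. A priori a twist could permute the labels of the transitions at a vertex it touches, which would permute the entries of the weight triple and spoil the equality; indeed twisting an edge of the base graph does interchange the white-split and crossing weights at the corresponding medial vertex, which is the content of $\eta(\tau)=(1\,3)$ exploited in Theorem~\ref{t.qsd}. The key fact to verify is that the particular twist relating $(G_m)_{cr(v_e)}$ to $(G^{\tau(e)}/e)_m$ preserves the rotation at every vertex, so that the crossing transition remains the crossing and the two splits remain splits, and that it respects the canonical checkerboard colouring, so that white stays white and black stays black. Once this label-preservation is checked from the local picture, exactly in the spirit of the tables appearing in the proof of Theorem~\ref{t.qsd}, the crossing term equals $\gamma_e\,Q(G^{\tau(e)}/e;(\bal,\bbe,\bga),t)$, and summing the three contributions yields the asserted recursion.
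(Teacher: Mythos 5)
Your proposal is correct and follows essentially the same route as the paper's proof, which likewise applies the one-vertex recursion of Proposition~\ref{linear recursion Q} at $v_e$, identifies the two split terms via Proposition~\ref{medial deletions}, and disposes of the crossing term by observing that twisting an edge of the medial graph changes neither the states nor their cycle counts. If anything you are more careful than the paper, whose two-line proof does not explicitly address the point you flag, namely that the twist relating $(G_m)_{cr(v_e)}$ to $(G^{\tau(e)}/e)_m$ must preserve the white/black/crossing labelling of the transitions at the surviving vertices so that the two weight systems agree.
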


\begin{proof}
 The identity follows  from Propositions~\ref{medial deletions} and~\ref{linear recursion Q} upon observing that twisting an edge of a medial graph does not change the number of cycles in a transition state.
 \end{proof}

\subsection{Twisted duals and the transition polynomial}
We will now see how the topological transition polynomial interacts with the ribbon group action.
The group $\fG= \langle  \delta, \tau    \; |\;   \delta^2, \tau^2, (\tau\delta)^3   \rangle$ is isomorphic to $\fS_3$ via
\[ 
\eta: \tau \mapsto (1\; 3) \quad \text{ and } \quad \eta: \delta \mapsto (1\; 2).  
\]
Furthermore, the symmetric group $\fS_3$  acts on the ordered triple of the weight system at a vertex by permutation. 
This action by $S_3$ on a vertex state weight can be extended to an action of $S_3^n$ on the vertex weight states of medial graphs with $n$ linearly ordered vertices. This can be done by mimicking the approaches used in Subsections~\ref{ss.rga} and~\ref{ss.cfgcb}. We will not formally define this action here and instead  define the order independent analogue of the action, which is more convenient for our applications.
This action allows us to use the ribbon group to modify the medial weight system of an embedded medial graph.  

\begin{definition}\label{permute weights}
Let $G_m$ be a canonically checkerboard coloured embedded medial graph of an embedded graph $G$ with medial weight system $W_m$ (or equivalently $(\bal, \bbe,\bga)$), and vertices indexed by the edges of $G$.  Let $\Gamma = \prod^6_{i=1}{\g_i(A_i)}$ where the $A_i$'s partition $E(G)$, and the $\g_i$'s are the six elements of $\fG$.  Then ${W_m}^{\Gamma}$ (or $(\bal, \bbe,\bga)^{\Gamma}$), the \emph{weight system permuted by $\Gamma$}, has the ordered triple of the weight system at a vertex $v_e$ given by $\eta ( \g_i) (\alpha_{v_e},\beta_{v_e},\gamma_{v_e})$ when $e \in A_i$.

\end{definition}

The classical Tutte polynomial has the duality property that the Tutte polynomial of a plane graph $G$ is the same as that of its geometric dual $G^*$ with the roles of the variables $x$ and $y$ permuted: $T(G;x,y)=T(G^*;y,x)$.  In~\cite{ES} it was shown that the topological transition polynomial has the duality property that 
\begin{equation}\label{transdual}
q(G_m; W_m, t) = q(G_m^*; W_m^*,t),
\end{equation}
\noindent
or equivalently, 
\begin{equation}
Q(G; (\boldsymbol\alpha, \boldsymbol\beta, \boldsymbol\gamma), t)=Q(G^*; (\boldsymbol\beta, \boldsymbol\alpha, \boldsymbol\gamma), t),
\end{equation}
 where $G^*$ is the geometric dual of $G$ and $W_m^*$ is the weight system that derives from exchanging the order of $\alpha_v$ and $\beta_v$ in the weight system at each vertex (\emph{i.e.} take $\Gamma = \delta(E(G))$ in Definition~\ref{permute weights}).  This led to a new duality result for the topological Tutte polynomial  in~\cite{ES}.  
 
 We are now able to extend this geometric duality to a full twisted duality property for the topological transition polynomial. This twisted duality relation says that the topological transition polynomial of the medial graph of $G$ is the same as that of the medial graph of any of the twisted duals, provided the weight system is appropriately permuted.  We   apply this twisted duality relation to derive new duality properties for the Penrose and topological Tutte polynomials in \cite{E-MMc} and \cite{E-MMd}.

\begin{theorem}\label{t.qsd}
Let $G$ be an embedded graph with embedded medial graph $G_m$, and let $\Gamma = \prod^6_{i=1}{\g_i(A_i)}$ where the $A_i$'s partition $E(G)$, and the $\g_i$'s are the six elements of $\fG$. Then,

\[ 
q\left(G_m; W_m, t\right) = q\left(G_m^{\Gamma}; W_m^{\Gamma}, t\right),  
\] 
\noindent
or equivalently,
\[
Q(G; (\boldsymbol\alpha, \boldsymbol\beta, \boldsymbol\gamma), t)=Q(G^{\Gamma}, (\boldsymbol\alpha, \boldsymbol\beta, \boldsymbol\gamma)^{\Gamma}, t).
\]
\end{theorem}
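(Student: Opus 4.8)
The plan is to reduce the global statement to a purely local verification at a single vertex, exactly as the earlier structural theorems did. First I would observe that, since the $A_i$ partition $E(G)$ and operations at distinct edges commute (Proposition~\ref{switch}), it suffices to prove the identity when $\Gamma = \g(e)$ is a single generator $\g \in \fG$ applied to a single edge $e$; the general case then follows by composing such moves, each time updating the weight system according to Definition~\ref{permute weights}. This is the standard "one generator, one edge'' reduction, and it is routine once the single-edge case is in hand.

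The heart of the argument is the single-edge case, and here I would lean on Proposition~\ref{t.medaction geo} together with the state-sum definition of $q$. The key point is that $q(G_m; W_m, t) = \sum_{\vec{s}} \omega(\vec{s})\, t^{c(\vec{s})}$ is a sum over \emph{arrow marked graph states} $\vec{s}$ of $G_m$, where the number of components $c(\vec{s})$ depends only on the underlying (unarrowed) state. I would set up a bijection between the states of $G_m$ and the states of $G_m^{\g(e)}$: namely, applying $\g$ at the vertex $v_e$ sends the state $\vec{s}$ to $\g(\vec{s}, v_e)$, and by Proposition~\ref{t.medaction geo} we have $(G_m)_{\g(\vec{s})} = ((G_m)_{\vec{s}})^{\g(e)} = G_m^{\g(e)}$ at the level of the induced cycle family graphs. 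The map $\eta: \fG \to \fS_3$ with $\eta(\tau) = (1\,3)$, $\eta(\delta) = (1\,2)$ precisely records how $\g$ permutes the three vertex states (white split, black split, crossing) at $v_e$, so the weight $\omega(v_e, \vec{s})$ in $G_m$ equals the permuted weight $\eta(\g)(\alpha_{v_e}, \beta_{v_e}, \gamma_{v_e})$-entry evaluated on the corresponding state of $G_m^{\g(e)}$. Thus each term in the state sum for $q(G_m; W_m, t)$ matches a term in the state sum for $q(G_m^{\g(e)}; W_m^{\g(e)}, t)$, with the \emph{same} exponent $c$ because the number of closed curves in a state is an intrinsic topological quantity unchanged by applying $\g$ at a single vertex.

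The main obstacle I anticipate is verifying that the bijection preserves the component count $c(\vec{s})$ while simultaneously tracking the weight permutation correctly, since $\g$ can change the embedded graph $G_m$ into a genuinely different ribbon graph $G_m^{\g(e)}$ (with different genus or vertex count), yet must leave the collection of closed curves arising from each \emph{state} combinatorially matched. I would resolve this by noting that a graph state of a $4$-regular graph is a choice of vertex state at every vertex, and this choice together with the resulting family of disjoint closed curves is \emph{independent of the embedding} (as remarked just before the cycle family graph definition); applying $\g$ at $v_e$ merely relabels which of the three local configurations is called white/black/crossing, and simultaneously relabels the weight triple by $\eta(\g)$, so the two relabellings cancel term-by-term. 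Once this local matching is established, summing over all states gives $q(G_m; W_m, t) = q(G_m^{\g(e)}; W_m^{\g(e)}, t)$, and the full result follows by iterating over the generators comprising $\Gamma$. The equivalent statement for $Q$ is then immediate from the definition $Q(G; (\bal,\bbe,\bga), t) = q(G_m; W_m, t)$ and Definition~\ref{permute weights}.
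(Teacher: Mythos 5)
Your proposal is correct and follows essentially the same route as the paper: reduce to a single generator $\g$ applied at a single edge $e$, then check locally that $\g(e)$ permutes the roles of the three vertex states at $v_e$ by $\eta(\g)$ while leaving the underlying half-edge pairings (and hence the component count of every state) unchanged, so that permuting the weight triple by $\eta(\g)$ matches the two state sums term by term --- which is exactly the verification the paper carries out pictorially via the cyclic order of the labels about $v_e$. The one quibble is that your appeal to Proposition~\ref{t.medaction geo} identifies $((G_m)_{\vec{s}})^{\g(e)}$ with $(G^{\g(e)})_m$ for a general state $\vec{s}$, which only holds for the canonical black duality state $\vec{b}$; this is inessential, since the local relabelling argument you give afterwards carries the proof on its own.
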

\begin{proof}
It suffices to prove this for one edge at a time, \emph{i.e.} to prove the property for $G^{\g(e)}$ where $\g \in \fG$.  The effect of $\g(e)$ on the medial graph is shown in the table below.  The first row gives $\g$.  The second row shows the effect on the arrow presentation of the arrows labelled $e$ in the arrow  presentation of $G^{\g(e)}$.  The labels 
$*, a, b, c$ 
are the labels on the disc(s) on either side of $e$, and the configurations of the discs are otherwise identical (so that, for example, if
\raisebox{-6mm}{\includegraphics[height=12mm]{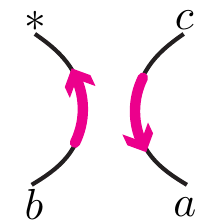}}
 represents two discs, then \raisebox{-6mm}{\includegraphics[height=12mm]{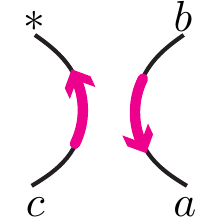}} represents one disc, and vice versa).  The third row shows the changes in the cyclic order of the vertices about $v_e$ in $(G^{\g(e)})_m$.
\[
\begin{tabular}{rcccccc}
$\g:$ & $1$ & $\tau$ & $\delta$ & $\tau \delta$ & $\delta \tau$ & $\tau\delta\tau$ 
\\  \\ \raisebox{7mm}{$G^{\g(e)}:$} & \includegraphics[height=16mm]{s1} & \includegraphics[height=16mm]{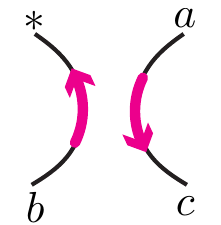}& \includegraphics[height=16mm]{s3}& \includegraphics[height=16mm]{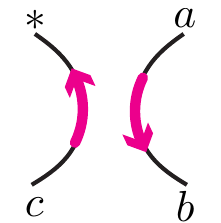}& \includegraphics[height=16mm]{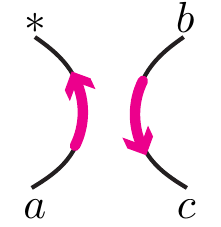}& \includegraphics[height=16mm]{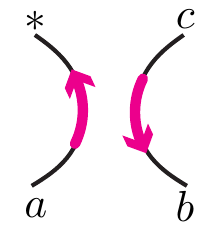}  \\
 \raisebox{7mm}{$(G^{\g(e)})_m:$} & \includegraphics[height=16mm]{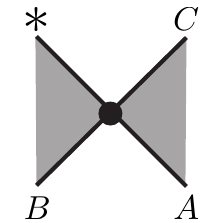} & \includegraphics[height=16mm]{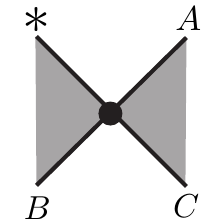}& \includegraphics[height=16mm]{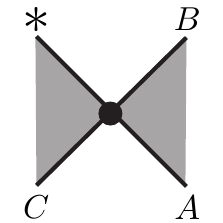}& \includegraphics[height=16mm]{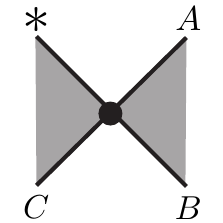}& \includegraphics[height=16mm]{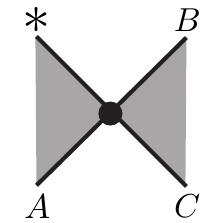}& \includegraphics[height=16mm]{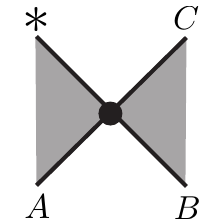}
\end{tabular}
\]
Note that if we give the labels $a, b, c$ the order $(a, b, c)$, then they 
are permuted in the second row of the table according to $\eta(\g)$, and hence $(\alpha_{v_e},\beta_{v_e},\gamma_{v_e})$ are permuted as claimed.  We illustrate this for $G^{\tau \delta (e)}$ below, leaving the other cases to the reader.
\[
\begin{tabular}{rccc}
 \raisebox{7mm}{$G_m \;\;=$} & \includegraphics[height=16mm]{s7} & \raisebox{7mm}{} & \includegraphics[width=50mm]{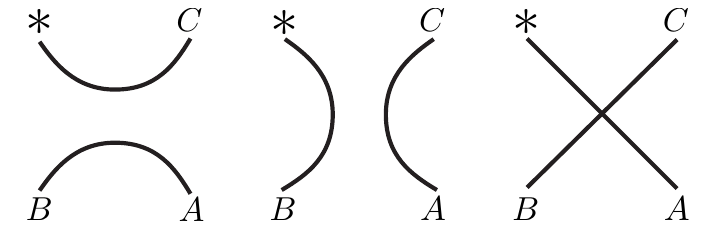}
 \\ & & &  \includegraphics[width=50mm]{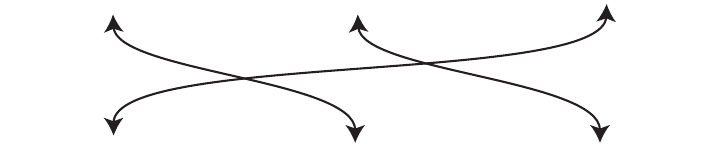}
 \\  \raisebox{7mm}{$(G^{\tau\delta(e)})_m \;\;=$} & \includegraphics[height=16mm]{s10} && \includegraphics[width=50mm]{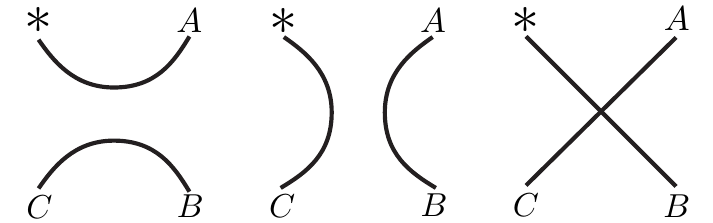}
\end{tabular}
\]

\end{proof}

Note that by taking take $\Gamma = \delta(E(G))$, the result of Equation~\ref{transdual} is now just an immediate corollary of Theorem~\ref{t.qsd}. 

\begin{corollary}

$Q(G; (\boldsymbol1, \boldsymbol1,\boldsymbol1),t)$ is constant on $Orb(G)$, where $(\boldsymbol1, \boldsymbol1,\boldsymbol1)$ is the weight system that assigns a 1 to every vertex state.
\end{corollary}
\begin{proof}
This follows from Theorem~\ref{t.qsd} since $  (\boldsymbol1, \boldsymbol1,\boldsymbol1)= (\boldsymbol1, \boldsymbol1,\boldsymbol1)^\Gamma$ for all $\Gamma$.
\end{proof}

Note that $Q(G; (\boldsymbol1, \boldsymbol1,\boldsymbol1),t)$ is just the generating function for the number of Eulerian $k$-partitions (see \cite{Las83}) of $G_m$.  It is an open question to characterize graphs such that $Q(G; (\boldsymbol1, \boldsymbol1,\boldsymbol1),t)=Q(H; (\boldsymbol1, \boldsymbol1,\boldsymbol1),t)$ when $G$ and $H$ are not twisted duals of one another.

\begin{corollary}\label{c.trans delete contract} Let $G$ be an embedded graph, $e \in E(G)$ and   $\g \in \fG$. Further, let $(\bal,\bbe,\bga)^{\g(e)}=(\bal',\bbe',\bga')$. Then
\[
Q(G; (\boldsymbol\alpha, \boldsymbol\beta, \boldsymbol\gamma), t)=\alpha_e' Q(G^{\g(e)}/e; (\boldsymbol\alpha', \boldsymbol\beta', \boldsymbol\gamma'), t)+\beta_e' Q(G^{\g(e)}-e; (\boldsymbol\alpha', \boldsymbol\beta', \boldsymbol\gamma'), t)+\gamma_e' Q(G^{\tau\g(e)}/e; (\boldsymbol\alpha', \boldsymbol\beta', \boldsymbol\gamma'), t).
\]
\end{corollary}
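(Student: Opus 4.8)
The plan is to obtain this corollary by chaining together the two preceding theorems: first use the twisted-duality relation of Theorem~\ref{t.qsd} to rewrite $Q(G;(\bal,\bbe,\bga),t)$ as a topological transition polynomial of $G^{\g(e)}$, and then apply the deletion/contraction reduction of Theorem~\ref{t.trans delete contract} to that graph.

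First I would apply Theorem~\ref{t.qsd} taking $\Gamma=\g(e)$, the twisted-dual operation acting at the single edge $e$ (the partition of $E(G)$ assigning $\g$ to $\{e\}$ and $1$ to the remaining edges). By Definition~\ref{permute weights}, the permuted weight system $(\bal,\bbe,\bga)^{\g(e)}$ alters only the triple at $v_e$, permuting it by $\eta(\g)$, and leaves every other vertex weight unchanged; this is precisely the weight system $(\bal',\bbe',\bga')$ of the hypothesis. Theorem~\ref{t.qsd} therefore gives
\[
Q(G;(\bal,\bbe,\bga),t)=Q(G^{\g(e)};(\bal',\bbe',\bga'),t).
\]

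Next I would apply Theorem~\ref{t.trans delete contract} to the embedded graph $G^{\g(e)}$ at the edge $e$, now carrying the weight system $(\bal',\bbe',\bga')$, which yields
\[
Q(G^{\g(e)};(\bal',\bbe',\bga'),t)=\alpha_e' Q(G^{\g(e)}/e;(\bal',\bbe',\bga'),t)+\beta_e' Q(G^{\g(e)}-e;(\bal',\bbe',\bga'),t)+\gamma_e' Q((G^{\g(e)})^{\tau(e)}/e;(\bal',\bbe',\bga'),t).
\]
Combining the two displays produces the entire right-hand side of the corollary, except that the third term involves $(G^{\g(e)})^{\tau(e)}$ rather than $G^{\tau\g(e)}$. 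The one point requiring verification is that these agree, which is immediate from the notational conventions of Definition~\ref{unordered twistdual}: $G^{\tau\g(e)}=G^{\g(e)\tau(e)}=(G^{\g(e)})^{\tau(e)}$. Substituting this identity completes the argument. Since every step is a direct invocation of an already-established result, I anticipate no genuine obstacle here; the only subtlety worth flagging is the order of composition appearing in the third term, which the conventions of Definition~\ref{unordered twistdual} resolve cleanly.
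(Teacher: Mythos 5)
Your proposal is correct and follows exactly the route the paper intends: the paper's proof simply states that the identity follows from Theorems~\ref{t.trans delete contract} and~\ref{t.qsd}, and you have filled in those two steps in the natural order, including the one genuine detail worth checking, namely that $(G^{\g(e)})^{\tau(e)}=G^{\tau\g(e)}$ by the convention $G^{\g\h(A)}:=G^{\h(A)\g(A)}$ of Definition~\ref{unordered twistdual}.
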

\begin{proof}
The identity follow easily from  Theorems~\ref{t.trans delete contract} and~\ref{t.qsd}.
\end{proof}

We have shown throughout this paper that there are deep and meaningful connections between the concepts of twisted duals and of medial graphs. In particular, in this section we have shown how these connections can be used to develop the theory of graph polynomials  by showing that the ribbon group action induces an action on the weights of the transition polynomial and obtaining a deletion-contraction-type relation for the transition polynomial. The applications of twisted duality to graph polynomials can in fact be pushed much further. Twisted duality can be used to obtain a host of new properties of other important graph polynomials. The topological transition polynomial assimilates a number of topological graph polynomials, consequently, the tools of twisted duality can be used to unravel the structure of these graph polynomials. We explore and develop applications of twisted duality to the Penrose polynomialand to the  topological Tutte polynomial   in the papers \cite{E-MMc} and \cite{E-MMd}.
 In \cite{E-MMc}, we introduce the topological Penrose polynomial 
 of an embedded graph $G$ and show that it can be recovered from the transition polynomial. In \cite{ES}, it was shown that the topological Tutte polynomial agrees with the topological transition polynomial on a restricted set of variables.
In \cite{E-MMc} and \cite{E-MMd} we exploit the connection between these graph polynomials and the transition polynomial, as well as the behaviour of the transition polynomial under the ribbon group action to find a number of properties of, and relationships among, various graph polynomials. These applications provide further evidence that twisted duality has the scope to develop the understanding of a wide variety of graph theoretical problems.

\section{Further directions}\label{conclusion}

Although the theory of twisted duality has answered many questions, many more have arisen in the course of this investigation.  We consolidate here some questions mentioned in previous sections, and add a few others.   

\begin{enumerate}

\item We have seen  that twisted duality is intimately related to medial graphs and checkerboard colourability, but further depths remain.  For example, if $G$ is any embedded $4$-regular graph, which of its twisted duals are also $4$-regular and checkerboard colourable?  These would be the twisted duals that are actually the embedded medial graphs of some embedded graph. Suppose $G$ and $H$ are non-isomorphic twisted duals that are both checkerboard colourable. How are the cycle family graphs of $G$ and $H$ related?  Also, is it possible to characterize those embedded graphs, without degree restrictions, that have a checkerboard colourable twisted dual?  Clearly every bipartite graph does (take its full dual), and this question is equivalent to characterizing those $G$ such that $Orb(G)$ contains a bipartite graph.

\item In this paper we investigated orbits of the ribbon group action, in particular determining the orbits of some special subgroups.  However, there are other subgroups of the ribbon group whose actions would be worth exploring.  Perhaps more importantly, it remains to understand various stabilizer subgroups of the ribbon group action. Is it possible to characterize embedded graphs that are ``self twisted dual'', that is, to determine  some $\Gamma=\prod_{i=1}^6\g_i (A_i)$, so that
$G^{\Gamma}$ is equivalent to $G$ as embedded graphs, locally embedded maps or abstract graphs?

\item A fundamental question that we have not addressed here is how topological invariants of an embedded graph $G$, such as  orientability, genus, degree sequence, or number of vertices, vary over the elements in an orbit of the ribbon group action. Is it possible to determine ranges for any of these invariants?  More ambitiously, is it possible to determine any of these invariants for $G^{\vg}$ just from the value of the invariant on $G$ and the knowledge of $\vg$?

\item Although we focused mainly on transition polynomial here, questions also abound for classical graph polynomials such as the Tutte or chromatic polynomials. In particular, how are various graph polynomials constrained on $Orb(G)$ for a fixed $G$?

\item 
Part of our interest in these problems derives from developing design strategies for self-assembling DNA nanostructures. Because of the great promise of nanotechnology, especially for biomolecular computing, but also, for example, drug delivery and biosensors (see \cite{Y+03, LL07}), recent research has focused on DNA self-assembly of nanoscale geometric constructs, notably graphs.  An essential step in building a self-assembling DNA nano-construct is designing the component molecules.  Ribbon graphs provide a particularly apt model for this application, with the edge boundaries representing single DNA strands and the vertices representing branched junction molecules.  Thus, the theory developed here for classifying graphs by their common medial graph suggests a possible efficient construction technique.  If a $4$-regular graph $F$ can be assembled out of DNA strands, with relatively small faces, then it might be used as a template to construct all of the graphs in $\mathcal{C}(F)$. The branched junction molecules forming its vertices would be decomposed into various vertex states, giving a state $\vec{s}$ of $F$. (This kind of split at the vertices was used to find Hamilton circuits by Adelman~\cite{Adl94}.) Then long (relative to the faces of $F$) double strands of DNA with cohesive ends (\emph{i.e.}  extended single strands of unsatisfied bases) might be introduced to form the edges of $F_{\vec{s}}$, with the relatively small cycles of $\vec{s}$ as the vertices.   This allows one $4$-regular graph to act as a single molecular ``template'' for constructing a whole class of nanostructures.  A number of graph polynomials, which are generally intractable to compute with current technology, might then, in principle, be found by a biomolecular computing process of chemical manipulations (splittings, cutting, and merging) of the molecules.

\end{enumerate}

\section*{Acknowlegements}

We thank Sergei Chmutov for pointing out to us the connection between  graphs of the form $G^{\tau(E(G))}$ and Petrie duals.

\bibliographystyle{amsplain} 

\end{document}